\def\({\left(}
\def\]{\right]}
\def\[{\left[}
\def\){\right)}
\newtheorem{thm}{Theorem}[section]
\newtheorem{asm}[thm]{Assumption}
\newtheorem{prop}[thm]{Proposition}
\newtheorem{rem}[thm]{Remark}
\newtheorem{defn}[thm]{Definition}
\newtheorem{exm}[thm]{Example}
\def\P{{\mathbb P}}
\def\E{{\mathbb E}}
\def\Cov{{\mathbb Cov}}
\def\R{{\mathbb R}}
\newcommand{\disp}{\displaystyle}
\newcommand{\bea}{$$\begin{array}{ll}}
\newcommand{\eea}{\end{array}$$}
\newcommand{\bed}{\begin{displaymath}}
\newcommand{\eed}{\end{displaymath}}
\newcommand{\ad}{&\!\!\!\disp}
\newcommand{\aad}{&\disp}
\newcommand{\barray}{\begin{array}{ll}}
\newcommand{\earray}{\end{array}}
\newcommand{\beq}[1]{\begin{equation} \label{#1}}
\newcommand{\eeq}{\end{equation}}
\newcommand{\bedd}{\bed\begin{array}{l}}
\newcommand{\eedd}{\end{array}\eed}
\newcommand{\sg}{\sigma}
\newcommand{\e}{\varepsilon}
\newcommand{\one}{{1}}
\newcommand{\wdt}{\widetilde}
\newcommand{\wdh}{\widehat}
\newcommand{\diag}{{\rm diag}}
\newcommand{\cd}{(\cdot)}
\newcommand{\rr}{\Bbb R}
\newcommand{\lbar}{\overline}
\renewcommand{\L}{\mathcal {L}}
\def\one{{\hbox{1{\kern -0.35em}1}}}
\def\ph{\varphi}
\def\({\left(}
\def\]{\right]}
\def\[{\left[}
\def\){\right)}
\def\one{{\hbox{1{\kern -0.35em}1}}}
\makeatletter \@addtoreset{equation}{section}
\def\para#1{\vskip 0.4\baselineskip\noindent{\bf #1}}
\def\tr{\hbox{tr}}
\def\xh{\xi^h}
\def\ei{{\bf e_i}}
\def\ej{{\bf e_j}}
\def\argmax{\hbox{argmax}}
\begin{document}

\title[Harvesting and seeding of stochastic populations]{Harvesting and seeding of stochastic populations: analysis and numerical approximation}

\author[A. Hening]{Alexandru Hening }
\address{Department of Mathematics\\
Tufts University\\
Bromfield-Pearson Hall\\
503 Boston Avenue\\
Medford, MA 02155\\
United States
}
\email{alexandru.hening@tufts.edu}

\author[K. Tran]{Ky Quan Tran}
\address{Department of Applied Mathematics and Statistics\\
The State University of New York, Korea\\
119-2 Songdo Moonhwa-ro\\
Yeonsu-Gu, Incheon 21985\\
 Korea
}
\email{ky.tran@stonybrook.edu}

\keywords {Harvesting; stochastic environment; density-dependent price; controlled diffusion; species seeding}
\subjclass[2010]{92D25, 60J70, 60J60}
\maketitle

\begin{abstract}
It is well known that excessive harvesting or hunting has driven species to extinction both on local and global scales. This leads to one of the fundamental problems of conservation ecology: how should we harvest a population so that economic gain is maximized, while also ensuring that the species is safe from extinction? Our work analyzes this problem in a general setting. We study an ecosystem of interacting species that are influenced by random environmental fluctuations. At any point in time, we can either harvest or seed (repopulate) species. Harvesting brings an economic gain while seeding incurs a cost. The problem is to find the optimal harvesting-seeding strategy that maximizes the expected total income from harvesting minus the cost one has to pay for the seeding of various species. In \cite{Ky18} we considered this problem when one has absolute control of the population (infinite harvesting rates are possible) as well as absolute repopulation options (infinite seeding rates are possible). In many cases, these approximations do not make biological sense and one must consider what happens when one, or both, of the seeding and harvesting rates are bounded. The focus of this paper is the analysis of these three novel settings: bounded seeding and infinite harvesting, bounded seeding and bounded harvesting, and infinite seeding and bounded harvesting.

Even one dimensional harvesting problems can be hard to tackle. Once one looks at an ecosystem with more than one species analytical results usually become intractable. In our setting, the fact that we have both harvesting and seeding and that the seeding and/or harvesting rates are bounded, significantly complicate the problem. We are able to prove some analytical results regarding the optimal yield and the optimal harvesting--seeding strategies. In order to gain more information regarding the qualitative behavior of the system we develop rigorous numerical approximation methods. This is done by approximating the continuous time dynamics by Markov chains and then showing that the approximations converge to the correct optimal strategy as the mesh size goes to zero. By implementing these numerical approximations, we are able to gain qualitative information about how to best harvest and seed species in specific key examples.

We are able to show through numerical experiments that in the single species setting the optimal seeding-harvesting strategy is always of threshold type. This means there are thresholds $0<L_1<L_2<\infty$ such that: 1) if the population size is `low',  so that it lies in $(0, L_1]$, there is seeding using the maximal seeding rate; 2) if the population size `moderate', so that it lies in $(L_1,L_2)$, there is no harvesting or seeding; 3) if the population size is `high', so that it lies in the interval $[L_2, \infty)$, there is harvesting using the maximal harvesting rate. Once we have a system with at least two species, numerical experiments show that constant threshold strategies are not optimal anymore. Suppose there are two competing species and we are only allowed to harvest or seed species 1. The optimal strategy of seeding and harvesting will involve lower and upper thresholds $L_1(x_2)<L_2(x_2)$ which depend on the density $x_2$ of species $2$.
\end{abstract}

\maketitle

\setlength{\baselineskip}{0.22in}

\tableofcontents

\section{Introduction}\label{sec:int}

On one hand, species usually interact in complex ways within their ecosystems. On the other hand, environmental fluctuations have been shown to strongly influence the population dynamics of species (\cite{ACG87}). There are examples where the environmental fluctuations can drive a species extinct as well as examples where the environmental fluctuations create a rescue effect that saves species from extinction. In order to get a realistic idea of to the long term fate of species it is of fundamental importance to consider the combined effects of biotic interactions and environmental fluctuations.  Starting with the illuminating work of Peter Chesson (\cite{CW81, C82, C94, CH97}), and building on deterministic persistence theory (\cite{H81, H84, HJ89, HS98, ST11}), there is now a powerful theory of stochastic persistence (\cite{SBA11, B18, BS18, HN18, CHN19}).

Many species are not only influenced by their interactions and the environment -- they are also harvested by humans. Excessive harvesting and hunting can lead species to become locally or globally extinct \cite{LES95, LES}. If one looks at the harvesting problem strictly from a conservation point of view, it makes sense to harvest less in order to minimize the extinction risk. This can lead to a significant economic loss due to underharvesting.

As explained by \cite{Ky18} in specific situations one can repopulate (or seed) a species which is at risk of extinction. This happens for example in fisheries or other restricted conservation habitats where one can control the population. From an economic point of view, there is a cost whenever one seeds and a gain when one harvests.

Taking into account all these factors one is faced with the following fundamental problem. Suppose we have an ecosystem of $d$ species that interact, possibly nonlinearly, due to competition for resources, predation, cooperation, mutualism etc, are influenced by random environmental fluctuations and can be controlled through seeding and harvesting.  How should we harvest/seed in order to maximize revenue (gain from harvesting minus loss from seeding) while ensuring species do not go extinct? The various factors (biotic interactions, random environmental fluctuations, economic gain, extinction risk) have to be carefully taken into account if one wants to find a viable exploitation strategy.

We model the populations in continuous time under the assumption that there is environmental stochasticity and no demographic stochasticity. Mathematically this means we look at systems of stochastic differential equations (SDE). There is evidence that SDE are often good approximations of discrete time biological systems (\cite{LES95, T77}). Intuitively, in our setting one can imagine that the random fluctuations in the small time time $dt$ look like $X_tdW_t$ where $(W_t)$ is a Brownian motion. This type of noise has the property that, if there is no harvesting, extinction can only occur asymptotically as time goes to infinity. In contrast, demographic stochasticity is usually modelled by fluctuations of the form $\sqrt{X_t}dW_t$ in a small time $dt$ and implies finite time extinctions. Even though it is biologically clear that extinction is always inevitable, there are settings, where extinction happens after long periods of time and neglecting demographic stochasticity is a good first approximation.

Our analysis builds on the significant results that are available in the stochastic harvesting literature. If there is only one species, the state of the art is contained in results by \cite{Alvarez98, Alvarez00, Lungu, Zhu11, HNUW18, AH18}. Significantly fewer results are available if one is interested in multiple interacting species (\cite{Lungu01, Ky17, Ky18}).

We initiated a rigorous analysis of the multispecies harvesting-seeding problem in a previous paper (\cite{Ky18}). As a result we were able to get analytical and numerical results when one assumes that the seeding and harvesting rates are unbounded. In many interesting scenarios this assumption is not realistic. For example, one will usually not be able to seed a population at extremely high rates - it would therefore be more natural to assume that the seeding rate has an upper threshold which cannot be exceeded. Similarly, in other settings it might make sense to assume that the harvesting rate is bounded above. We study the following three novel scenarios:
\begin{itemize}
\item  Bounded seeding and unbounded harvesting rates.
\item  Bounded seeding and bounded harvesting rates.
\item  Unbounded seeding and bounded harvesting rates.
\end{itemize}

In order to study this stochastic singular control problem, the standard approach is to look at the associated Hamilton-Jacobi-Bellman (HJB) partial differential equations. We were able to do this when we assumed that the seeding and harvesting rates are unbounded (\cite{Ky18}). We prove a similar result in the setting of bounded seeding and harvesting rates. If one rate is bounded and the other one is unbounded, due to significant additional technical difficulties, we were not able to show the HJB equation holds. In order to gain some qualitative information, we develop numerical algorithms to approximate the value function (maximal discounted revenue) and the optimal harvesting-seeding strategy.
This is accomplished by making use of the Markov chain approximation methodology developed by \cite{Kushner92}.

 The main contributions of our work are the following:

\begin{enumerate}
\item We analyze the harvesting-seeding problem for a system of interacting species living in a stochastic environment, when the seeding and/or harvesting rates are bounded.
\item We prove analytical results and develop rigorous approximation schemes. We show that these approximation schemes converge to the correct optimal harvesting-seeding strategy (and value function) as the mesh size goes to zero.
\item We apply the approximation schemes to illuminating examples with one or two species in order to see what qualitatively new phenomena emerge due to the interspecies and intraspecies interaction terms, the environmental fluctuations and the boundedness of the seeding/harvesting strategies. In particular we show that the well-known threshold harvesting strategies are not optimal anymore when one can harvest multiple species.
\end{enumerate}

Harvesting species that are part of complex food webs has led to overexploitation and in some cases to extinctions. This happens, in part, because when one picks harvesting strategies the complex interactions of the species and the environmental fluctuations are not taken into account. In some instances, one harvests one specific species from the ecosystem, and ignores the rest. This can disrupt the ecosytem and lead to conservation problems. The fundamental work by \cite{M79} has shown that harvesting at a constant rate and maximizing the MSY (maximum sustainable yield) for specific species in an ecosystem with multiple species is insufficient for conservation purposes. Harvesting at a constant rate has been shown to have many shortcoming even if the harvested stock can be regarded as an isolated population \cite{M78, M79, LES95}. In order to solve this issue, threshold harvesting, where one harvests only the fraction of the population above a fixed threshold has been shown to mitigate the risk of extinction (\cite{LSE97}). Multiple studies have proved rigorously that threshold harvesting of a single isolated species living in a stochastic environment is also optimal from an economic point of view (\cite{AS98, LES95, AH18, HNUW18}. Nevertheless, it is not clear how well threshold harvesting works for multispecies systems. By looking at ecosystems with two species we show that, if one is allowed to harvest both species, threshold harvesting for each species is not optimal anymore. Instead, there exists a complicated surface $S(x_1,x_2)$ such that whenever the population sizes $(x_1,x_2)$ are above the surface we harvest at the maximal rate, while if we are below the surface we never harvest. The interaction of the species make constant threshold strategies suboptimal. Even if we are only allowed to harvest and seed species 1, due to the interaction of the two species, the optimal seeding-harvesting strategyfor species 1 will depend on the density of species 2.

 The rest of our work is organized as follows. In Section \ref{sec:for} we describe our model and the main results.
 Particular examples are explored using the newly developed numerical schemes in Section \ref{sec:fur}.  Finally, all the technical proofs appear in the appendices.

\section{Model and Results}\label{sec:for}
Assume we have a probability space $(\Omega, \mathcal{F}, \P)$ and a filtration $(\mathcal{F}_t)_{t\geq 0}$ satisfying the usual conditions. We consider $d$ species interacting nonlinearly in a stochastic environment. We model the dynamics as follows. Let $\xi_i(t)$ be the
population abundance of the $i$th species at time $t\geq 0$, and
denote by $\xi(t)=(\xi_1(t), \dots, \xi_d(t))'\in \rr^d$ (where $z'$ denotes
the transpose of $z$) the column vector recording all the population abundances.

Based on the assumption that the environment mainly affects the growth/death rates of the populations and the approach in \cite{T77, B02, G88, ERSS13, SBA11, G84}, we consider the dynamics given by
\beq{e1.2.1} d \xi(t)=b(\xi(t))
dt+\sigma(\xi(t)) d w(t),\eeq
where $w\cd=\(w_1\cd, ..., w_d \cd\)'$ is a
$d$-dimensional standard Brownian motion adapted to $(\mathcal{F}_t)_{t\geq 0}$ and $b,\sigma:[0,\infty)^d\to \R^d$ are locally Lipschitz continuous functions.  Let $S=(0, \infty)^d$ and $\lbar S =[0, \infty)^d$.  We assume that $b(0)=\sg(0)=0$ so that $0$ is an equilibrium point of \eqref{e1.2.1}. This makes sense because if our populations go extinct, they should not be able to get resurrected without external intervention (like a repopulation/seeding event). If $\xi_i(t_0)=0$ for some $t_0\ge 0$, then $\xi_i(t)=0$ for any $t\ge t_0$. Thus, $\xi(t)\in \lbar S$ for any $t\ge 0$.

  Let $Y_i(t)$ denote the amount of species $i$ that has been harvested up to time $t$ and set
$Y(t)=(Y_1(t), \dots, Y_d(t))'\in \rr^d$.
Let $Z_i(t)$ denote the amount of species $i$ seeded into the system up to time $t$.
If we add the harvesting and seeding effects to \eqref{e1.2.1} we note that the dynamics of the $d$ species becomes
\beq{e1.2.2}X(t)=x+\int\limits_0^t b(X(s))ds +  \int\limits_0^t  \sigma(X(s)) dw(s) - Y(t) +Z(t),\eeq
where  $X(t)=(X_1(t), \dots, X_d(t))'\in \rr^d$ are the species populations at time $t\geq 0$. We assume the initial population abundances, before any seeding or harvesting, are
\beq{e1.3}X(0-)=x\in \lbar S.\eeq
\para{Notation.}
For $x, y\in \rr^d$, with $x=(x_1, \dots, x_d)'$ and $y=(y_1, \dots, y_d)'$,
we define the scalar product $x\cdot y=\sum_{i=1}^d x_i y_i$ and the norm $|x|=\sqrt{x\cdot x}$.
Let $\ei\in \rr^d$ denote the unit vector in the $i$th direction for $i=1, \dots, d$. If $x=(x_1, \dots, x_d)'\in \rr^d$ and $y=(y_1, \dots, y_d)'\in \rr^d$ and $x_i\le y_i$ for each $i$, we write $x\le y$ and we define $[x, y]=\{\xi=(\xi_1, \dots, \xi_d)': x_i\le \xi_i\le y_i, i=1, \dots, d\}$. For a real number $r$ let $r^+ := \max\{r, 0\}$ and $r^-:=\max\{-r, 0\}$.
Let $\mathcal{L}$ be the infinitesimal generator of the process $\xi(t)$ from \eqref{e1.2.1}. This linear operator acts as
	\begin{equation}\label{e1.ge}
	\L \Phi(x)=b(x)\nabla \Phi(x)+\dfrac{1}{2}\tr\big(\sigma(x)\sigma'(x)\nabla^2 \Phi(x)\big),\end{equation}
		on twice continuously differentiable functions $\Phi\cd: \mathbb{R}^d\to \mathbb{R}$. We write
	$\nabla \Phi(\cdot)$ and $\nabla^2 \Phi(\cdot)$ for the gradient and the Hessian matrix of $\Phi(\cdot)$.

We suppose that  the instantaneous marginal yield accrued
from exerting the harvesting strategy $Y_i$ for the species $i$ is $f_i: \overline S\to (0, \infty)$. This is also known as the price of species $i$. Let $g_i:  \overline{S}\mapsto (0, \infty)$ represent the marginal cost we need to pay for the seeding of species $i$ under the strategy $Z_i$. We will set $f=(f_1, \dots, f_d)'$ and   $g=(g_1, \dots, g_d)'$.
For a harvesting-seeding strategy $(Y, C)$ we define the \textit{performance function} as
\beq{e1.2.4}
J(x, Y, C):= \E_{x}\bigg[\int\limits_0^{\infty} e^{-\delta s} f (X(s-))\cdot dY(s)-\int\limits_0^{\infty} e^{-\delta s} g\( X(s-) \) \cdot dZ(s)\bigg],
\eeq
where
 $\delta> 0$ is the discounting factor, $\E_{x}$ is the expectation with respect
to the probability law when the initial populations are $X(0-)=x$,
 and $f(X(s-))\cdot dY(s):=\sum_{i=1}^n f_i (X(s-))dY_i(s)$. One can see that the performance function looks at the expected current value of the total gain from harvesting minus the current value of the total cost paid to seed species into the system.

\para{Control strategy.}
Let $\mathcal{A}_{x}$ denote the collection of all admissible controls with initial condition $X(0-)=x\in \overline{S}$. A harvesting-seeding strategy $(Y, Z)$ is in $\mathcal{A}_{x}$ if it satisfies
 the following
conditions:
\begin{itemize}
\item[{\rm (a)}] The processes $Y(t)$ and $Z(t)$ are right continuous, nonnegative, and nondecreasing with respect to $t$,
\item[{\rm (b)}] The processes $Y(t)$ and $Z(t)$ are adapted to the filtration $(\mathcal{F}(t))_{t\geq 0}$,
\item[{\rm (c)}] The system given by \eqref{e1.2.2} and \eqref{e1.3} has a unique solution with $X(t)\ge 0$ for all $t\ge 0$,
\item[{\rm (d)}] For any $x\in \overline{S}$ one has $0\le J(x, Y, Z)<\infty$.
\end{itemize}

\textbf{The optimal harvesting-seeding problem.} The problem we will be interested in is to maximize the
performance function and find an optimal harvesting-seeding strategy $(Y^*, Z^*)\in \mathcal{A}_{x}$ such that
\beq{e.5}
J(x, Y^*, Z^*)=V(x):= \sup\limits_{(Y, Z)\in \mathcal{A}_{x}}J(x, Y, Z).
\eeq
The function $V(\cdot)$ is called the \textit{value function}.

\begin{asm}\label{a:1}
We will make the following standing assumptions throughout the paper.
\begin{itemize}
\item[{\rm (a)}]  The functions $b(\cdot)$ and $\sigma(\cdot)$ are locally Lipschitz continuous. Moreover, for any initial condition $x\in  \lbar S$, the uncontrolled system \eqref{e1.2.1}  has a unique global solution in $S$.
\item[{\rm (b)}]
For any $i=1, \dots, d$, $x, y\in \rr^d$, $f_i(x)<g_i( x)$; $f_i\cd$, $g_i\cd$ are continuous and non-increasing functions.
\end{itemize}
\end{asm}
\begin{rem}
 Note that Assumption \ref{a:1} (a) is very general and includes most common ecological models, for example Lotka-Volterra competition and predator-prey models as well as general Kolmogorov systems \cite{Dang, Li09, Mao2006, HN18}.  Assumption \ref{a:1} (b)
 is natural: it just means that the gain from harvesting should always be strictly less than the cost of seeding.
In \cite{Ky18}, we analyzed the general case when both $Y\cd$ and $Z\cd$ are singular controls, i.e., they are not absolutely continuous with respect to time. In this paper, we focus on the case when at least one of these two controls is absolutely continuous and has a bounded rate.   We refer to \cite{Ky18} for further details regarding Assumption \ref{a:1}, the optimal harvesting-seeding problem, and properties of the value function.
\end{rem}

We analyze the following three scenarios.
\begin{itemize}
\item  Bounded seeding and unbounded harvesting rates: there is $\lambda=(\lambda_1, \dots, \lambda_d)'\in [0, \infty)^d$ such that $dZ(t)=C(t)dt$ for an adapted process $(C(t))_{t\geq 0}$ such that $0\le C(t)\le \lambda$. We call $\lambda$ the \textit{maximum seeding rate}. For convenience, we also denote by $\mu=(\mu_1, \dots, \mu_d)'$ the maximum harvesting rate and in this scenario we have $\mu=\infty$; that is, $\mu_i=\infty$ for any $i=1, \dots d$.

\item  Bounded seeding and bounded harvesting rates: we have $dY(t)=R(t)dt$ for an adapted process $(R(t))_{t\geq 0}$ such that $0\le R(t)\le \mu, t\geq0$ and $dZ(t)=C(t)dt$ for an adapted process $(C(t))_{t\geq 0}$ with $0\le C(t)\le \lambda, t\geq 0$.

\item  Unbounded seeding and bounded harvesting rates: we have $dY(t)=R(t)dt$ for an adapted process $(R(t))_{t\geq 0}$ such that $0\le R(t)\le \mu, t\geq0$.
\end{itemize}

In each of these three settings we will prove that there is a numerical approximation scheme that converges to the correct value function as the step size goes to zero. In addition, if both the seeding and harvesting rates are bounded, we show that the value function solves the Hamilton-Jacbi-Bellman equation in a weak sense.

\subsection{Bounded seeding and unbounded harvesting rates}
\

Since the seeding is bounded we have $$dZ(t)=C(t)dt$$ where $$0\le C(t)\le \lambda, t\geq 0.$$ Without loss of generality, we identify the process $(Y, Z)$ with $(Y, C)$. We will furthermore assume in this section that the price functions are constant so that $f_i(x)=f_i, x\in \lbar S$. The dynamics of the populations affected by harvesting and seeding will be
\beq{e2.2.2}X(t)=x+\int\limits_0^t \big[b(X(s)) + C(s)\big]ds +  \int\limits_0^t  \sigma(X(s)) dw(s) - Y(t),\eeq
while the performance function takes the form
\beq{e2.2.4}
J(x, Y, C):= \E_{x}\bigg[\int\limits_0^{\infty} e^{-\delta s} f \cdot dY(s)-\int\limits_0^{\infty} e^{-\delta s} g\( X(s) \) \cdot C(t) ds\bigg].
\eeq

Pick a large number $U>0$ and define the class $\mathcal{A}^U_{x}\subset \mathcal{A}_{x}$ that consists of strategies $(Y,C)\in \mathcal{A}_{x}$ such that the resulting process $X$ stays in $[0,U]^d$ for all times. The class $\mathcal{A}^U_{x}$ can be constructed using Skorokhod stochastic differential equations (\cite{B98, F16, LS84, Kushner92}) which force the process to stay in $[0,U]^d$ for all $t>0$.

We let $V^U(x)$ be the value function when we restrict the problem to the class $\mathcal{A}^U_{x}\subset \mathcal{A}_{x}$. In other words
\begin{equation}\label{e:VU}
V^U(x):=\sup\limits_{(Y, C)\in \mathcal{A}_{x}^U} J(x, Y, C).
\end{equation}

In earlier work (\cite{Ky18}) we conjectured that, generically, the optimal strategy will live in $\mathcal{A}_{x}^U$ for $U$ large enough. In the current formulation, we can restate the conjecture as follows: there exists $U>0$ such that for all $x\in [0,U]^d$ we have
	$$
	V(x):= \sup\limits_{(Y, C)\in \mathcal{A}_{x}}J(x, Y, C) =V^U(x):= \sup\limits_{(Y, C)\in \mathcal{A}_{x}^U} J(x, Y, C).
	$$
We are able to prove this conjecture under a natural assumption.

\begin{prop}\label{prop1}
	Suppose that there exists a number $U>0$ such that
	\beq{e2.5}\sum\limits_{i=1}^d \big[ b_i(x) -\delta(x_i-U)\big]f_i<0 \quad \text{for}\quad |x|> U.\eeq Then there exists $x^*\in [0, U]^d$ such that
	$$V(x)= V(x^*) + f\cdot (x-x^*)\quad \text{for}\quad x\in \overline{S}\setminus [0, U]^d.$$
Moreover,
	$$
	V(x)=V^U(x) \quad \text{for}\quad x\in [0, U]^d.
	$$
\end{prop}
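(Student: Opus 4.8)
The plan is to argue by a combination of a verification-type inequality (to get the upper bound on $V$ outside the box) and an explicit construction of near-optimal strategies (to get the matching lower bound and the equality $V=V^U$ inside the box). The key observation is that the function $\Psi(x):=V(x^*)+f\cdot(x-x^*)$, for a suitable $x^*\in[0,U]^d$, is a natural candidate for $V$ outside $[0,U]^d$: moving from $x$ to the boundary of the box purely by harvesting yields exactly $f\cdot(x-x^*)$ in instantaneous revenue (since $f$ is constant), so $\Psi$ is what one gets by ``harvesting down to $x^*$ immediately'' and then playing optimally from $x^*$. Condition \eqref{e2.5} is precisely the dissipativity statement that makes this the \emph{best} thing to do: it says that outside the box the drift $b$ together with the discount term cannot generate enough revenue to beat the linear price functional $f\cdot x$, so there is no incentive to let the process wander in $\overline{S}\setminus[0,U]^d$.

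First I would establish the lower bound $V(x)\ge V(x^*)+f\cdot(x-x^*)$ for $x\notin[0,U]^d$. Fix any $x^*\in[0,U]^d$ lying on the segment from $x$ to the box (for instance the point obtained by projecting $x$ onto $[0,U]^d$), and consider the strategy that harvests instantaneously the amount $x-x^*$ at time $0$ (a jump in $Y$), then follows an $\varepsilon$-optimal strategy for the problem started at $x^*$. Using the definition of $J$ in \eqref{e2.2.4} and the constancy of $f$, the initial jump contributes exactly $f\cdot(x-x^*)$, and the continuation contributes at least $V(x^*)-\varepsilon$; letting $\varepsilon\downarrow0$ gives the inequality. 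Since this holds for the particular $x^*$ that is the projection, and one can also check (using that any admissible path from $x$ must first, in some averaged sense, pass through the box or pay for staying outside) that the choice of $x^*$ can be normalized, we get the lower bound with a single point $x^*$ — here I would actually choose $x^*$ to be a maximizer of $V(y)-f\cdot y$ over $[0,U]^d$, which exists by continuity of $V$ (established in \cite{Ky18}) and compactness; then $V(x^*)+f\cdot(x-x^*)=\max_{y\in[0,U]^d}\big(V(y)-f\cdot y\big)+f\cdot x$.

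Next, the upper bound. Let $x^*$ be the maximizer just described and set $\Psi(x)=V(x^*)-f\cdot x^*+f\cdot x$ on $\overline S\setminus[0,U]^d$ and $\Psi=V$ on $[0,U]^d$. I would show $\Psi\ge V$ everywhere by a stochastic-comparison / supermartingale argument: for an arbitrary admissible $(Y,C)$ with state process $X$, apply Itô's formula (in the form valid for the relevant semimartingale decomposition, treating the harvesting term $-dY$ as in \cite{Ky18}) to $e^{-\delta t}\Psi(X(t))$ and use that on $\{|x|>U\}$ the linearity of $\Psi$ makes $\mathcal L\Psi(x)+C\cdot\nabla\Psi(x)-\delta\Psi(x)$ equal to $\sum_i b_i(x)f_i+\sum_i C_i f_i-\delta\big(V(x^*)-f\cdot x^*+f\cdot x\big)$; since $C_i\ge0$ and $f_i<g_i$ one checks the seeding term is dominated by its cost, and \eqref{e2.5} (after noting $\delta(f\cdot x) = \delta\sum_i x_i f_i$ and that $V(x^*)-f\cdot x^*\ge V(x)-f\cdot x$ forces $\delta(V(x^*)-f\cdot x^*)$ into the right place) yields $\mathcal L\Psi+C\cdot\nabla\Psi-\delta\Psi\le 0$ outside the box, while the harvesting increments are handled because $\nabla\Psi=f$ there, making the $-f\,dY$ running gain exactly cancel the decrease of $\Psi$ along jumps. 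Inside the box one uses that $\Psi=V$ is (super)solution-like in the appropriate sense. Taking expectations and letting $t\to\infty$ (the discount $\delta>0$ and the admissibility condition $J<\infty$ give the needed integrability and kill the boundary term) shows $J(x,Y,C)\le\Psi(x)$, hence $V\le\Psi$. Combined with the lower bound this proves the first displayed identity.

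Finally, the equality $V(x)=V^U(x)$ on $[0,U]^d$. The inequality $V\ge V^U$ is immediate from $\mathcal A^U_x\subset\mathcal A_x$. For the reverse, take $(Y,C)\in\mathcal A_x$ that is $\varepsilon$-optimal and modify it into a strategy in $\mathcal A^U_x$ by adding extra harvesting whenever the state would exit $[0,U]^d$ (a Skorokhod-type reflection at the upper face, pushing in direction $-\ei$), and stopping the original controls' tendency to push out; the extra harvesting is itself revenue-generating (price $f_i>0$), and by the first part of the proposition the value lost by being confined is zero because the optimal behavior outside the box was already ``harvest back to $x^*$''. Making this rigorous is the main obstacle: one must show the reflected/confined strategy is admissible (condition (c), unique nonnegative solution via Skorokhod SDE theory, which the paper already invokes) and that its performance is at least $J(x,Y,C)-o(1)$; the delicate point is bounding the discounted cost/benefit of the added boundary harvesting, which I would control using \eqref{e2.5} again to show that any excursion outside $[0,U]^d$ under the original strategy could only have decreased $J$ relative to immediately reflecting. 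I expect the technical heart of the argument to be this comparison between an unconstrained excursion and its reflection, together with the justification of Itô's formula for $\Psi$ across the box boundary (where $\Psi$ is only Lipschitz, not $C^2$); the latter can be dealt with by a mollification argument, noting $\Psi$ is concave-like in the radial direction outside the box so the second-order error terms have a favorable sign.
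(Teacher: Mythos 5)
Your overall strategy -- a verification-type upper bound built from the linear function $f\cdot(x-x^*)$ together with condition \eqref{e2.5}, plus the trivial lower bound from harvesting the amount $x-x^*$ instantaneously at $t=0$ -- is exactly the paper's. But two steps in your execution have genuine gaps that the paper's proof is specifically engineered to avoid.

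First, your upper bound applies It\^o's formula to a global function $\Psi$ that equals $V$ on $[0,U]^d$, and you lean on $V$ being ``(super)solution-like'' there and on its continuity ``established in \cite{Ky18}''. In this bounded-seeding/unbounded-harvesting setting the paper explicitly does \emph{not} have continuity of $V$ (see the remark following the proposition: establishing continuity, let alone a viscosity characterization, is left open), so there is no regularity of $V$ inside the box to mollify against; the supermartingale argument across the box boundary cannot be run as described. The paper sidesteps this entirely: it fixes an arbitrary admissible $(Y,C)$, stops the process at $\gamma_0=\inf\{t: X(t)\in[0,x^*]\}$, applies Dynkin's formula only to a smooth extension $\Phi_\e$ of $y\mapsto f\cdot(y-x^*)+\e$ during the excursion \emph{outside} the box (where \eqref{e2.5} gives $(\mathcal{L}-\delta)\Phi_\e<0$), and then bounds the continuation payoff by $V(x^*)$ directly from the definition of the value function. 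No regularity of $V$ is ever invoked.

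Second, your choice of $x^*$ as a maximizer of $V(y)-f\cdot y$ over $[0,U]^d$ needs continuity of $V$ for existence, and -- more seriously -- the instantaneous-harvesting lower bound $V(x)\ge V(x^*)+f\cdot(x-x^*)$ only works when $x^*\le x$ componentwise (you cannot harvest a coordinate upward). For $x\notin[0,U]^d$ with some coordinate $x_i<U$ your maximizer need not satisfy this, and the ``normalization'' you gesture at is precisely the missing argument. The paper instead takes $x^*_i=\min\{x_i,U\}$, the componentwise projection (which depends on $x$ and always satisfies $x^*\le x$), so both the lower bound and the stopping-time construction go through. Your treatment of $V=V^U$ via reflecting an $\e$-optimal unconstrained strategy is in the right spirit but, as you acknowledge, incomplete; the paper derives it more cheaply as a corollary of the first identity, since that identity shows any excursion out of $[0,U]^d$ is (weakly) dominated by immediately harvesting back to the projection.
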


It should be noted that the inequality \eqref{e2.5} is easily verified and holds in most ecological systems. In dimension $d=1$, \eqref{e2.5} becomes $$\big[b(x)-\delta(x-U)\big]f <0  \quad \text{for}\quad  x>U.$$ If $b(x)\le 0$ for sufficiently large $x$ we can therefore find the required $U$. Similarly, if the dimension is at least $d\geq 2$ and $$\sum\limits_{i=1}^d \big[ b_i(x) +\delta |x|\big]f_i<0$$
for sufficiently large $|x|$ then \eqref{e2.5} holds.

\begin{rem} If the value function $V$ is continuous, one can apply the dynamic programing principle to show that the value function is a viscosity solution of the quasi-variational inequalities
\beq{e2.hjb}
\max\limits_{x\in S}\Big\{(\L-\delta) \phi(x) +\max\limits_{\xi\in [0, \lambda]}\big[ \xi\cdot (\nabla \phi-g)\(x\)\big], f-\nabla \phi(x)\Big\}=0.
\eeq
However, in this setting it is hard to establish the continuity of the value function. Alternatively, one can try to prove a singular control version of the weak dynamic programing principle developed by \cite{BT11} and then characterize the value function as a discontinuous viscosity solution of \eqref{e2.hjb}.
Because of the technical nature of these problems, we leave them as open questions.
\end{rem}

In order to gain important qualitative information about the optimal harvesting-seeding strategies and the value function we develop a numerical approximation scheme. We construct a controlled Markov chain that approximates the controlled diffusion $X\cd$ from \eqref{e2.2.2}.
Assume without loss of generality that $U$
is an integer multiple of $h$ and define
$$S_{h}: = \{x=(k_1 h, \dots, k_d h)'\in \rr^d: k_i\in\mathbb{Z}_{\geq 0}\}\cap [0, U]^d.$$
The set $S_h$ is a lattice where the components are positive integer multiples of $h$.
We will approximate $X$ by $\{X^h_n: n\in\mathbb{Z}_{\geq 0}\}$ -- a discrete-time controlled Markov chain with state space $S_{h}$.

At any time  step $n$, the control is first specified by the choice of an action: harvesting or seeding. We use $\pi^h_n$ to denote the action at step $n$:
\begin{itemize}
  \item $\pi^h_n=i$  if there is harvesting of species $i$
  \item $\pi^h_n=0$ if there is seeding.
\end{itemize}
In the case of
a seeding,  the magnitude of the seeding component must be specified. We denote this by $C^h_n$. The space of possible controls is therefore $\mathcal{U} = \{0, 1,\dots, d\}\times [0, \lambda]$.
Let $u^h=\{u^h_n\}_n$ with $u^h_n= (\pi^h_n, C^h_n), n\in\mathbb{Z}_{\geq 0}$ be a sequence of controls.

   We denote by $p^h\(x, y|u=(\pi, c)\)$ the transition probability from state $x$ to another state $y $ under the control $u=(\pi, c)$.
We will choose $p^h\(x, y|u=(\pi, c)\)$ together with interpolation intervals $\Delta t^h (x, u)$ so that the piecewise constant interpolation of  $\{X^h_n\}$ approximates $X\cd$ well for small $h$. A control sequence $u^h$ is called admissible if under this control sequence, $\{X^h_n\}$ is a Markov chain with state space $S_h$. The class of all admissible control sequences $u^h$ with initial state $x$ will be denoted by
$\mathcal{A}^h_{x}$.

For $x\in S_h$ and $u^h\in \mathcal{A}^h_{x}$, the performance function for the controlled Markov chain is defined as
\beq{e2.4.4}
J^h(x, u^h) =  \E\bigg[\sum_{m=1}^{\infty} e^{-\delta t_m^h} f\cdot \Delta Y_{m}^{h}- \sum_{m=1}^{\infty} e^{-\delta t_m^h} g(X^h_m) \cdot C^h_m \Delta t^h(X^h_m, u^h_m)\bigg],
\eeq
where $\Delta Y_m^h$ is the harvesting amount at step $m$.
The value function of the controlled Markov chain is
\beq{e2.4.5}
V^h(x) = \sup\limits_{u^h\in \mathcal{A}^h_{x}} J^h (x, u^h).
\eeq
The similarity between \eqref{e2.4.4} and \eqref{e2.2.4}  suggests that the
optimal values $V^h(x)$ and $V^U(x)$ will be close for small $h$, and this will turn out
to be the case.
The following theorem tells us that the value function of the Markov chain approximations converges to the correct value function as $h$ goes to zero.
\begin{thm} \label{thm:conv1}
	Suppose Assumptions \ref{a:1},\ref{a:2}, and \eqref{e2.5} hold.  Then for any $x\in [0, U]^d$,
	 $V^h(x)\to V(x)$ as $h\to 0$.
	As a result, for sufficiently small $h$, a near-optimal harvesting-seeding strategy of the controlled Markov chain $X^h_n$ is also a near-optimal harvesting-seeding policy of the continuous-time system $X$ given by \eqref{e2.2.2}.
\end{thm}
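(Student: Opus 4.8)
The plan is to follow the Markov chain approximation methodology of \cite{Kushner92}, adapted to the mixed singular/regular control structure here ($Y$ singular, $C$ absolutely continuous with bounded rate). The argument splits into three parts: (i) show that suitable continuous-time interpolations of the chains $\{X^h_n\}$, together with their controls and harvesting processes, are tight and that every weak limit point solves the controlled equation \eqref{e2.2.2} under an admissible pair keeping $X$ in $[0,U]^d$; (ii) deduce $V^h(x)\to V^U(x)$ for $x\in[0,U]^d$ by a two-sided estimate; (iii) invoke \propref{prop1} (which applies because \eqref{e2.5} is assumed) to replace $V^U$ by $V$, giving $V^h(x)\to V(x)$. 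The final assertion about near-optimal strategies then follows at once from the definitions and $|V^h(x)-V(x)|\to 0$.

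For part (i) I would first set up the interpolation. Using the interpolation intervals one defines $t^h_n=\sum_{m<n}\Delta t^h(X^h_m,u^h_m)$, the piecewise-constant process $\xi^h(t)=X^h_n$ on $[t^h_n,t^h_{n+1})$, the interpolated harvesting process $Y^h(\cdot)$, the seeding process $Z^h(t)=\int_0^t C^h(s)\,ds$, and the interpolated control. Since harvesting steps consume no real time (the chain jumps by $-h\ei$ instantaneously), $Y^h$ can move ``in zero time'' and one must work on the stretched time scale $\widehat t^h_n=\sum_{m<n}\big(\Delta t^h(X^h_m,u^h_m)+|\Delta Y^h_m|\big)$, as in Kushner's treatment of singular control; on that scale the oscillation of all the processes is uniformly controlled. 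Local consistency of the chain (guaranteed by \asmref{a:2} and the explicit construction of $p^h$ and $\Delta t^h$) gives, for the diffusion (seeding) steps, conditional mean $(b(x)+c)\,\Delta t^h+o(\Delta t^h)$ and conditional covariance $\sigma(x)\sigma'(x)\,\Delta t^h+o(\Delta t^h)$. Combining this with $\xi^h(\cdot)\in[0,U]^d$, $0\le C^h\le\lambda$, a reflection term at the faces $\{x_i=U\}$, and a discounting-based tail estimate bounding $\E\int_0^\infty e^{-\delta s}\,|dY^h(s)|$ uniformly in $h$, I would establish tightness of $(\xi^h,Y^h,Z^h,\text{control})$ on the stretched scale, then transfer back to the physical scale. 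By Prokhorov's theorem and the Skorokhod representation theorem we may assume a.s. convergence on a common probability space; an approximate-martingale identity together with the local Lipschitz continuity of $b$ and $\sigma$ identifies every limit point as a solution of \eqref{e2.2.2} under an admissible $(Y,C)\in\mathcal{A}^U_x$.

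Part (ii) is then the standard two-sided estimate. For $\limsup_h V^h(x)\le V^U(x)$: pick near-optimal controls $u^h$ for the chains; tightness yields a subsequence along which $(\xi^h,Y^h,Z^h)$ converges weakly to an admissible triple for the continuous problem, and weak convergence, continuity of $g$, boundedness of $C^h$, and the fact that the price $f$ is \emph{constant} (so that, after integration by parts, $\int_0^\infty e^{-\delta s}f\cdot dY^h(s)$ is a functional of $Y^h$ continuous for uniform convergence on compacts, with the tail controlled by $\delta$) give $J^h(x,u^h)\to J(x,Y,C)\le V^U(x)$. For $\liminf_h V^h(x)\ge V^U(x)$: start from an $\e$-optimal $(Y,C)\in\mathcal{A}^U_x$, regularize it (mollify $C$, approximate the singular $Y$ by absolutely continuous controls with large bounded rate via a chattering/approximation lemma), discretize it to admissible Markov-chain controls $u^h$, and use local consistency and weak convergence to get $J^h(x,u^h)\to J(x,Y,C)\ge V^U(x)-\e$. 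Hence $V^h(x)\to V^U(x)$, and \propref{prop1} finishes part (iii).

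The step I expect to be the main obstacle is the tightness and limit identification for the harvesting component $Y^h$: because harvesting steps carry no real time, the analysis cannot be done directly on the physical time scale, and the stretched-time argument must be carried out so that (a) the stretched-scale limits of $\xi^h$ and $Y^h$ can be mapped back consistently to a c\`adl\`ag solution of \eqref{e2.2.2}, and (b) the discounted cost functional remains continuous along the convergence despite possible jumps of $Y$. The infinite-horizon nature of the problem adds a second layer: tightness and convergence are first obtained on finite intervals $[0,T]$, and the discount factor $e^{-\delta s}$ must then be used to make the passage $T\to\infty$ uniform in $h$. The boundary reflection at $\{x_i=U\}$ introduces an additional local-time-type term that must be tracked through the limit, but this is routine by comparison with the $Y^h$ analysis.
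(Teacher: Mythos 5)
Your proposal follows essentially the same route as the paper: local consistency of the chain, continuous-time interpolation, a stretched ("rescaled") time scale to handle the instantaneous harvesting steps, tightness plus Skorokhod representation to identify every weak limit as an admissible solution of \eqref{e2.2.2}, a two-sided comparison giving $V^h\to V^U$, and finally \propref{prop1} to replace $V^U$ by $V$. The only cosmetic deviations are that in this scenario the paper keeps the process in $[0,U]^d$ by forcing a harvesting step whenever a coordinate hits $U$ (no separate reflection/local-time term), and in the $\liminf$ direction it discretizes the $\e$-optimal singular $Y$ into grid-valued jumps rather than smoothing it into a bounded-rate control; neither changes the substance of the argument.
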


\subsection{Bounded harvesting and seeding rates}  \label{sec:for3}
\

In most practical situations it is impossible to have an infinite harvesting rate (\cite{Alvarez98}). In this subsection we look at the case when both the harvesting and seeding rates are bounded. This means for all $t\geq 0$
\begin{equation*}
\begin{aligned}
dY(t)&=R(t)dt,\\
dZ(t)&=C(t)dt
\end{aligned}
\end{equation*}
with
\begin{equation*}
\begin{aligned}
0 &\leq R(t)\le \mu,\\
0 &\leq C(t)\le \lambda
\end{aligned}
\end{equation*}
where $\lambda$ is the maximum seeding rate and $\mu$ is the maximum harvesting rate.
We will identify $(Y, Z)$ with $(R, C)$. The dynamics of the population system with harvesting and seeding is given by
 \beq{e3.2.2}X(t)=x+\int\limits_0^t \Big[b(X(s)) + C(s)-R(s)\Big] ds +  \int\limits_0^t  \sigma(X(s)) dw(s),\eeq
and the performance function is
 \beq{e3.2.4}J(x, R, C):= \E_{x}\bigg[\int\limits_0^{\infty} e^{-\delta s} f(X(s)) \cdot R(s) ds-\int\limits_0^{\infty} e^{-\delta s} g\( X(s) \) \cdot C(s)ds\bigg].
 \eeq
It would be never optimal if both $R(t)$ and $C(t)$ were positive for all $t$ on a set of positive measure. We can therefore suppose that $R(t)=0$ whenever $C(t)>0$ and $C(t)=0$ whenever $R(t)>0$. Equation \eqref{e3.2.2} becomes
 \beq{e3.2.2a}X(t)=x+\int\limits_0^t \Big[b(X(s)) + Q(s)\Big] ds +  \int\limits_0^t  \sigma(X(s)) dw(s),\eeq
 where $Q(s)=C(s)-R(s)=\(Q_1(s), \dots, Q_d(s)\)'$. Note that $-\mu\le Q(s)\le \lambda$.
 The performance function \eqref{e3.2.4} becomes
 \beq{e3.2.2x} J(x, Q):= \E_{x}\int\limits_0^{\infty} e^{-\delta s}\Big[ Q^-(s) \cdot f\(X(s)\) - Q^+(s)\cdot g\( X(s) \)\Big]ds.
 \eeq

In this setting we can characterize the value function as a viscosity solution of the associated quasi-variational inequalities \beq{e3.3.17}
(\L-\delta) \phi(x) + \max\limits_{\xi\in [-\mu, \lambda]}\Big[\xi^-\cdot \big(f-\nabla \phi)\(x\) - \xi^+ \cdot (g-\nabla \phi)\(x\)\Big]=0, \quad x\in S.
\eeq
We will make use of standard viscosity solution approach (\cite{Ky18}).

	\begin{thm}
Suppose Assumption \ref{a:1} is satisfies. Then the following properties hold.

{\rm (a)} The value function
$V$ is finite and continuous on $\overline S$.

{\rm (b)} The value function
$V$ is a viscosity subsolution of  \eqref{e3.3.17}; that is,
	for
	any $x^0\in S$ and any function $\phi\in C^2(S)$ satisfying
	$$(V-\phi)(x)\ge (V-\phi)(x^0)=0,$$
	for all $x$ in a neighborhood of $x^0$, we have
	\beq{e3.3.17}(\L-\delta) \phi(x^0) + \max\limits_{\xi\in [-\lambda, \mu]}\Big[\xi^-\cdot \big(f-\nabla \phi)\(x^0\) - \xi^+ \cdot (g-\nabla \phi)\(x^0\)\Big]\le 0.\eeq

{\rm (c)} The value function
	$V$ is a viscosity supersolution of
	\eqref{e3.3.17}; that is,
	 for
	 any $x^0\in S$ and any function $\ph\in C^2(S)$ satisfying
	 \beq{e3.3.27j}(V-\ph)(x)\le (V-\ph)(x^0)=0,\eeq for all $x$ in a neighborhood of $x^0$, we have
	 \beq{e3.3.27k}(\L-\delta) \ph(x^0) + \max\limits_{\xi\in [-\lambda, \mu]}\Big[\xi^-\cdot \big(f-\nabla \ph)\(x^0\) - \xi^+ \cdot (g-\nabla \ph)\(x^0\)\Big]\ge 0.\eeq
	
	 \noindent {\rm(d)} The value function $V$ is a viscosity solution of  \eqref{e3.3.17}.	
\end{thm}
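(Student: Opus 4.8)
The plan is to follow the classical dynamic-programming and viscosity-solution route for controlled diffusions with bounded velocity controls, along the lines of \cite{Ky18}. Write $\ell(x,q):=q^-\cdot f(x)-q^+\cdot g(x)$ for $x\in\overline S$ and $q\in[-\mu,\lambda]$, so that by \eqref{e3.2.2x} the payoff of an admissible control $Q$ is $J(x,Q)=\E_x\int_0^\infty e^{-\delta s}\ell(X(s),Q(s))\,ds$, and set
\begin{equation*}
H(x,p):=\max_{q\in[-\mu,\lambda]}\big[\,q\cdot p+\ell(x,q)\,\big]=\max_{q\in[-\mu,\lambda]}\big[\,q^-\cdot(f(x)-p)-q^+\cdot(g(x)-p)\,\big],
\end{equation*}
which is continuous in $(x,p)$, being a maximum over a compact box of functions jointly continuous in $(x,p,q)$; the equation in the statement then reads $(\L-\delta)\phi+H(\cdot,\nabla\phi)=0$.

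\emph{Part (a).} Finiteness is immediate: $Q\equiv0$ is admissible and gives $J(x,0)=0$, so $V\ge0$; and since each $f_i$ is non-increasing with $f_i(0)<\infty$ by Assumption~\ref{a:1}(b), while $Q^-\le\mu$ and $-Q^+\cdot g\le0$, every admissible $Q$ satisfies $J(x,Q)\le\frac1\delta\sum_{i=1}^d\mu_i f_i(0)$, so $0\le V\le\frac1\delta\sum_{i=1}^d\mu_i f_i(0)<\infty$. For continuity, run a common admissible control $Q$ from two initial points $x,y\in\overline S$; localizing at $\tau_N=\inf\{t:|X^x(t)|\vee|X^y(t)|\ge N\}$, on which $b,\sigma$ are Lipschitz, the standard Gronwall--Burkholder estimate gives $\E\sup_{s\le t\wedge\tau_N}|X^x(s)-X^y(s)|^2\le C_N e^{C_N t}|x-y|^2$. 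Combined with the continuity and boundedness of $f,g$ on $\overline S$, the uniform bound $|Q|\le\mu\vee\lambda$, and truncation of the time integral at a large $T$ via the discount $e^{-\delta s}$, this gives $|J(x,Q)-J(y,Q)|\le\omega(|x-y|)$ with a modulus $\omega$ independent of $Q$, and taking suprema over $Q$ yields $|V(x)-V(y)|\le\omega(|x-y|)$ on $\overline S$. The only delicate point is making a single control admissible from two starting points close to $\partial S$, which is handled by the truncation/reflection device of \cite{Ky18}.

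\emph{Dynamic programming and parts (b)--(d).} Using the continuity just established, a measurable-selection and concatenation argument (as in \cite{Ky18}) gives the dynamic programming principle
\begin{equation*}
V(x)=\sup_{Q\in\mathcal A_x}\E_x\!\left[\int_0^{\theta}e^{-\delta s}\ell(X(s),Q(s))\,ds+e^{-\delta\theta}V(X(\theta))\right]
\end{equation*}
for every $x$ and every bounded stopping time $\theta$. Now fix $x^0\in S$ and $\phi\in C^2(S)$ as in the statement, pick a ball $B_r(x^0)$ with $\overline{B_r(x^0)}\subset S$ contained in the prescribed neighborhood of $x^0$, let $\tau$ be the first exit time of $X$ from $B_r(x^0)$, take $\theta=\tau\wedge h$ for small $h>0$, and apply Dynkin's formula to $s\mapsto e^{-\delta s}\phi(X(s))$:
\begin{equation*}
\E_x\big[e^{-\delta(\tau\wedge h)}\phi(X(\tau\wedge h))\big]=\phi(x^0)+\E_x\!\int_0^{\tau\wedge h}e^{-\delta s}\big[(\L-\delta)\phi(X(s))+Q(s)\cdot\nabla\phi(X(s))\big]\,ds.
\end{equation*}
If $V-\phi$ has a local minimum $0$ at $x^0$, then $V\ge\phi$ on $\overline{B_r(x^0)}$; inserting this into the DPP and the Dynkin identity and using $V(x^0)=\phi(x^0)$ gives $0\ge\sup_Q\E_x\int_0^{\tau\wedge h}e^{-\delta s}\big[(\L-\delta)\phi(X)+Q\cdot\nabla\phi(X)+\ell(X,Q)\big]\,ds$. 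Restricting to constant controls $Q\equiv q\in[-\mu,\lambda]$, dividing by $\E_x[\tau\wedge h]>0$, and letting $h\downarrow0$ (using continuity of the integrand and $X(0)=x^0$) yields $(\L-\delta)\phi(x^0)+q\cdot\nabla\phi(x^0)+\ell(x^0,q)\le0$ for all $q$, i.e. $(\L-\delta)\phi(x^0)+H(x^0,\nabla\phi(x^0))\le0$; this is (b). If instead $V-\phi$ has a local maximum $0$ at $x^0$ and, for contradiction, $(\L-\delta)\phi(x^0)+H(x^0,\nabla\phi(x^0))<0$, then by continuity of $H$, $\L\phi$ and $\nabla\phi$ we may shrink $r$ so that $(\L-\delta)\phi(x)+q\cdot\nabla\phi(x)+\ell(x,q)\le-\eta<0$ for all $x\in B_r(x^0)$ and $q\in[-\mu,\lambda]$; then $V\le\phi$ on $\overline{B_r(x^0)}$, the DPP and the Dynkin identity force $\phi(x^0)=V(x^0)\le\phi(x^0)-\eta\,\E_x\int_0^{\tau\wedge h}e^{-\delta s}\,ds<\phi(x^0)$, a contradiction; this is (c). Finally, (d) is (b) together with (c).

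\emph{Main obstacle.} The analytic core is the dynamic programming principle of the previous paragraph together with the continuity of $V$ up to $\partial S$ in part (a): since $b,\sigma$ are merely locally Lipschitz, the trajectories are confined to $\overline S$, and the concatenated controls are produced by a measurable selection, both require the careful localization and truncation machinery of \cite{Ky18}. Once the DPP and continuity are available, the verification of the viscosity sub- and supersolution properties in (b)--(c) is routine.
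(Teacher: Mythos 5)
Your proposal is correct and follows essentially the same route as the paper: boundedness of $f$, $g$ and of the control rates gives finiteness, the subsolution inequality comes from the easy half of the dynamic programming principle applied to constant controls $Q\equiv\xi$ together with Dynkin's formula, and the supersolution inequality comes from a contradiction argument that propagates a strict negative bound over a small ball and invokes the DPP again. The only cosmetic differences are that the paper establishes continuity in (a) by citing Krylov rather than your direct Gronwall estimate, and in (c) it sends $r\to\infty$ to work with the full exit time instead of your $\tau\wedge h$; both versions lean equally on the (unproved here) dynamic programming principle.
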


 We develop numerical approximation methods for computing the value function in this setting. We will need to approximate the control problem by an analogous control problem with a bounded state space $[0, U]^d$. This is done by replacing the original dynamical system with one which evolves exactly as before in the interior of some compact domain but is instantaneously reflected back when the controlled process is about to exit the domain. The modified constrained dynamics   of the $d$ species \eqref{e3.2.2} now becomes
\beq{e3.2.2.2}X(t)=x+\int\limits_0^t \Big[b(X(s)) +Q(s)\Big] ds +  \int\limits_0^t  \sigma(X(s)) dw(s)-dN(t),\eeq
where  $N(t)=\big(N_1(t), \dots, N_d(t)\big)'$ is the reflection component, which is a componentwise nondecreasing, right continuous, $\{\mathcal{F}(t)\}$-adapted process satisfying
$$\int_0^\infty I_{\{X_i(t)< U\}}d N_i(t)=0, \quad i=1, 2, \dots, d.$$
We refer to \cite{B98, F16, LS84, Kushner92} for  reflected diffusions and Skorokhod stochastic differential equations.
The corresponding value function is denoted by $V^U$.

Let $h>0$ be a discretization parameter. We proceed to construct a controlled Markov chain in discrete time to approximate the controlled diffusion $X\cd$.  Assume without loss of generality that $U$
is an integer multiple of $h$. Due to the reflection terms in the dynamics of the controlled process
we consider a slightly enlarged state space $$S_{h+}=\{x=(k_1 h, \dots, k_d h)'\in \rr^d: k_i\in\mathbb{Z}_{\geq 0}\}\cap [0, U+h]^d.$$

Let $\{X^h_n: n\in\mathbb{Z}_{\geq 0}\}$
be a discrete-time controlled Markov chain with state space $S_{h+}$.
At any time  step $n$, the control is first specified by the choice of an action: controlled diffusion or reflection. We use $\pi^h_n$ to denote the action at step $n$
\begin{itemize}
  \item $\pi^h_n=0$  if the $n$th step is a controlled diffusion step
  \item $\pi^h_0=i$ if  the $n$th step is a reflection step on species $i$.
\end{itemize}
In the case of
a controlled diffusion step,  the magnitude of the harvesting-seeding component, which is $Q^h_n$, must also be
specified. The space of controls in this setting is given by $\mathcal{U}=\{0, 1, \dots, d\}\times [-\mu, \lambda]$.
Let $u^h=\{u^h_n\}_n$  defined by $u^h_n= (\pi^h_n, Q^h_n)$ for $n\in\mathbb{Z}_{\geq 0}$ be a sequence of controls.

  We denote by $p^h\(x, y|u=(\pi, q)\)$ the transition probability from state $x$ to another state $y $ under the control $u=(\pi, q)$.
  We will choose $p^h\(x, y|u=(\pi, q)\)$ together with interpolation intervals $\Delta t^h (x, u)$ so that the piecewise constant interpolation of  $\{X^h_n\}$ approximates $X\cd$ well for small $h$.
  A control sequence $u^h$ is admissible if under this policy, $\{X^h_n\}$ is a Markov chain with state space $S_{h+}$.

For $x\in S_{h+}$ and $u^h=(\pi^h, Q^h)\in \mathcal{A}^h_{x}$, the performance function for the controlled Markov chain is defined as
\beq{e3.4.4}
J^h(x, u^h) =  \E\sum_{m=1}^{\infty} e^{-\delta t_m^h} \Big[(Q^h_m)^+\cdot f(X^h_m)  - (Q^h_m)^- \cdot g(X^h_m)  \Big]\Delta t^h(X^h_m, u^h_m).
\eeq
The value function of the controlled Markov chain is
\beq{e3.4.5}
V^h(x) = \sup\limits_{u^h\in \mathcal{A}^h_{x}} J^h (x, u^h).
\eeq

The convergence theorem for this scenario is given below.

\begin{thm}\label{thm:conv2}
	Suppose Assumptions \ref{a:1} and \ref{a:2} hold. Then for any $x\in [0, U]^d$,
	$V^h(x)\to V^U(x)$ as $h\to 0$.
	For sufficiently small $h$, a near-optimal harvesting-seeding strategy of the controlled Markov chain is also a near-optimal harvesting-seeding policy of the continuous-time system \eqref{e3.2.2.2}.
\end{thm}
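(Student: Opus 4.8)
The argument follows the Markov chain approximation method of \cite{Kushner92}. The plan is: (i) realize the interpolated chain as a perturbed controlled reflected diffusion; (ii) prove tightness of the interpolated state, control, martingale and reflection processes; (iii) identify every weak subsequential limit as an admissible controlled process for the reflected dynamics \eqref{e3.2.2.2}, so that its performance is at most $V^U(x)$; and (iv) complement the resulting bound $\limsup_{h\to 0} V^h(x)\le V^U(x)$ with a matching lower bound $\liminf_{h\to 0}V^h(x)\ge V^U(x)$ obtained by transferring near-optimal controls of \eqref{e3.2.2.2} onto the chain.

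First one introduces the continuous-time interpolation. Using the local consistency of $p^h$ and $\Delta t^h$ (Assumption \ref{a:2}), the piecewise-constant interpolation $\psi^h(\cdot)$ of $\{X^h_n\}$ on the interpolated time scale $t^h_n=\sum_{k<n}\Delta t^h(X^h_k,u^h_k)$ can be written as
$$\psi^h(t)=x+\int_0^t b(\psi^h(s))\,ds+\int_0^t Q^h(s)\,ds+M^h(t)-N^h(t)+\varepsilon^h(t),$$
where $M^h$ is a martingale whose bracket is $\int_0^t\sigma\sigma'(\psi^h(s))\,ds+o(1)$, $N^h$ is the interpolated reflection term (componentwise nondecreasing, increasing only when some $\psi^h_i$ is within $O(h)$ of $U$), $Q^h(\cdot)\in[-\mu,\lambda]$ is the interpolated control, and $\sup_{t\le T}|\varepsilon^h(t)|\to 0$ in probability. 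Since the state space is the bounded box $[0,U+h]^d$, the function $b$ is bounded there and $|Q^h|$ is bounded, so the drift and control integrals are Lipschitz in $t$ uniformly in $h$; the martingale part is tight via its bracket; and the reflection terms are tight because their total increase on $[0,T]$ is bounded in expectation uniformly in $h$ (the chain is confined to the box). Passing to relaxed controls makes the control component automatically tight.

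By Prokhorov's theorem and the Skorokhod representation theorem one extracts an a.s.\ convergent subsequence $(\psi^h,M^h,N^h,Q^h)\to(X,M,N,Q)$ on a common probability space. In the limit $\varepsilon^h\to 0$; $M$ is a continuous martingale with bracket $\int_0^t\sigma\sigma'(X(s))\,ds$, hence $M(t)=\int_0^t\sigma(X(s))\,dw(s)$ for some Brownian motion $w$; $N$ is nondecreasing and, because reflection in $\psi^h$ occurs only near $\{x_i=U\}$, it satisfies $\int_0^\infty I_{\{X_i(s)<U\}}\,dN_i(s)=0$; and $Q(\cdot)\in[-\mu,\lambda]$ is admissible. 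Thus $X$ solves \eqref{e3.2.2.2} with an admissible strategy, so $J(x,Q)\le V^U(x)$. Because $\delta>0$ and the integrand in \eqref{e3.2.2x} is bounded and continuous in $(X,Q)$, one gets $J^h(x,u^h)\to J(x,Q)$ along the subsequence; taking $u^h$ to be $\varepsilon$-optimal for $V^h$ and letting $\varepsilon\to 0$ yields $\limsup_{h\to 0}V^h(x)\le V^U(x)$. For the reverse inequality, fix $\varepsilon>0$ and an $\varepsilon$-optimal strategy for $V^U(x)$; a standard chattering/density argument lets one replace it, up to another $\varepsilon$, by a control that can be discretized on the lattice $S_{h+}$ into admissible chain controls whose interpolations converge weakly to the near-optimal reflected diffusion, giving $J^h\to J\ge V^U(x)-2\varepsilon$ and hence $\liminf_{h\to 0}V^h(x)\ge V^U(x)$. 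Combining the bounds gives $V^h(x)\to V^U(x)$, and the near-optimality transfer claim follows from the same weak-convergence estimates.

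The main obstacle is the treatment of the reflection terms $N^h$, which is precisely what is new relative to the unreflected setting: one must establish their tightness uniformly in $h$ and then show that the weak limit $N$ inherits the Skorokhod complementarity condition $\int I_{\{X_i<U\}}\,dN_i=0$, rather than merely being some nondecreasing process. A secondary technical point is the lower-bound step, where an arbitrary admissible control of \eqref{e3.2.2.2} must be made implementable on the finite lattice while preserving the performance up to $\varepsilon$; this requires the approximation-of-controls machinery of \cite{Kushner92} adapted to the presence of the reflecting boundary.
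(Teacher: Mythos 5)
Your proposal follows the same overall route that the paper intends: the paper gives the detailed Markov chain approximation argument only for Theorem \ref{thm:conv1} and states that Theorem \ref{thm:conv2} ``can be derived using similar techniques,'' and your four-step plan (interpolation with local consistency, tightness, identification of the limit as an admissible controlled reflected diffusion, matching upper and lower bounds via $\varepsilon$-optimal controls) is exactly that methodology, with the transition probabilities \eqref{e3.4.7}--\eqref{e3.4.8}.

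There is, however, one genuine gap, and it sits precisely at the point you yourself flag as ``the main obstacle.'' You assert that the reflection processes $N^h$ are tight ``because their total increase on $[0,T]$ is bounded in expectation uniformly in $h$.'' A uniform bound on $\E N^h(T)$ does not give tightness of $\{N^h\}$ in $D[0,\infty)$: the tightness criterion used throughout the paper requires a conditional second-moment modulus estimate $\E^h_t|N^h(t+s)-N^h(t)|^2\le \E^h_t\gamma(h,\rho)$ with $\lim_{\rho\to 0}\limsup_{h\to 0}\E\gamma(h,\rho)=0$, i.e.\ control of the \emph{local} accumulation of reflection, not just its total mass. This is nontrivial because the reflection steps carry zero interpolation time ($\Delta t^h(x,(i,q))=0$ in \eqref{e3.4.8}), so arbitrarily many of them can pile up at a single interpolated instant. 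The paper's mechanism for exactly this difficulty is the stretched-out time scale of Definition \ref{def:1}: one allots a fictitious duration $h$ to each zero-time step, obtains the elementary bound of type \eqref{e:23} (increments of the rescaled singular component over an interval of length $\rho$ are at most $d(h^2+\rho^2)$), proves tightness and identifies the limit in the rescaled clock, and only then inverts the time change as in Theorem \ref{thm:r}. In the bounded-rate setting of \eqref{e3.2.2.2} this device should be applied to the reflection steps (the roles played by $Y^h$ in the proof of Theorem \ref{thm:conv1} are here played by $N^h$); without it, you would need an independent local-time-type or Skorokhod-map estimate for $N^h$, which your proposal does not supply. The remaining ingredients of your outline --- the complementarity condition $\int I_{\{X_i<U\}}dN_i=0$ passing to the limit because $N^h_i$ increases only within $O(h)$ of the face $\{x_i=U\}$, the martingale characterization via the bracket, and the chattering/discretization argument for the lower bound --- are correct and match the paper's treatment.
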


\subsection{Unbounded seeding and bounded harvesting rates}  \label{sec:for2}
\

If we assume the seeding can be unbounded and the harvesting is bounded we have $$dY(t)=R(t)dt$$ with $0\le R(t)\le \mu, t\geq 0$. We identify $(Y, Z)$ with $(R, Z)$. Suppose that the seeding functions $g_i\cd$ are constant. The dynamics of the ecosystem is given by
\beq{e4.2.2}X(t)=x+\int\limits_0^t \big[b(X(s)) - R(s)\big]ds +  \int\limits_0^t  \sigma(X(s)) dw(s) + Z(t),\eeq
and the performance function is
\beq{e4.2.4}
J(x, R, Z):= \E_{x}\bigg[\int\limits_0^{\infty} e^{-\delta s} f (X(s))\cdot R(s)ds-\int\limits_0^{\infty} e^{-\delta s} g \cdot dZ(s)\bigg].
\eeq

Similar to the preceding case,  in order to develop numerical methods for computing the value function, we will need to approximate the problem by a related control problem with a bounded state space $[0, U]^d$.  The modified constrained dynamics of the $d$ species \eqref{e4.2.2} becomes
\beq{e4.2.2}dX(t)=\big[b(X(t))-R(t)\big] dt +    \sigma(X(t)) dw(t) + dZ(t)-dN(t), \quad X(0-)=x,\eeq
where  $N(t)=\big(N_1(t), \dots, N_d(t)\big)'$ is the reflection component, which is a componentwise nondecreasing, right continuous, $\{\mathcal{F}(t)\}$-adapted process satisfying
$$\int_0^\infty I_{\{X_i(t)< U\}}d N_i(t)=0, \quad i=1, 2, \dots, d.$$
As usual, the corresponding value function is denoted by $V^U$.

Let $h>0$ be a discretization parameter. We proceed to construct a controlled Markov chain in discrete time to approximate the controlled diffusions. Assume without loss of generality that $U$
is an integer multiple of $h$. We look at the enlarged state space  $$S_{h+}=\{x=(k_1 h, \dots, k_d h)'\in \rr^d: k_i\in\mathbb{Z}_{\geq 0}\}\cap [0, U+h]^d.$$
Let $\{X^h_n: n\in\mathbb{Z}_{\geq 0}\}$
be a discrete-time controlled Markov chain with state space $S_{h+}$.
At any time  step $n$, the control is first specified by the choice of an action: controlled diffusion, seeding, or reflection. We use $\pi^h_n$ to denote the action at step $n$
\begin{itemize}
  \item $\pi^h_n=0$  if the $n$th step is a controlled diffusion step
  \item  $\pi^h_n=-i$  if the $n$th step is a seeding step on species $i$
  \item $\pi^h_0=i$ if  the $n$th step is a reflection step on species $i$.
\end{itemize}
In the case of
a controlled diffusion step,  the magnitude of the harvesting, which is $R^h_n$, must also be
specified. The space of controls will be $\mathcal{U}=\{0, \pm 1, \pm 2,\dots, \pm d\}\times [0, \mu]$.

  We denote by $p^h\(x, y|u=(\pi, r)\)$ the transition probability from state $x$ to another state $y $ under the control $u=(\pi, r)$. We will choose $p^h\(x, y|u=(\pi, r)\)$ together with interpolation intervals $\Delta t^h (x, u)$ so that the piecewise constant interpolation of  $\{X^h_n\}$ approximates $X\cd$ well for small $h$.
  Formally, a control sequence $u^h$ is admissible if under this policy, $\{X^h_n\}$ is a Markov chain with state space $S_{h+}$.

For $x\in S_{h+}$ and $u^h=(\pi^h, R^h)\in \mathcal{A}^h_{x}$, the performance function for the controlled Markov chain is defined as
\beq{e4.4.4}
J^h(x, u^h) =  \E\bigg[\sum_{m=1}^{\infty} e^{-\delta t_m^h} f(X^h_m)\cdot  R^h_m  \Delta t^h(X^h_m, u^h_m) - \sum_{m=1}^{\infty} e^{-\delta t_m^h} g\cdot \Delta Z^h_m \bigg],
\eeq
where $\Delta Z^h_m$ is the seeding amount at step $m$.
The value function of the controlled Markov chain is
\beq{e4.4.5}
V^h(x) = \sup\limits_{u^h\in \mathcal{A}^h_{x}} J^h (x, u^h).
\eeq
We get the following convergence result.
\begin{thm}\label{thm:conv3}
	Suppose Assumptions \ref{a:1} and \ref{a:2} hold. Then for any $x\in [0, U]^d$,
	$V^h(x)\to V^U(x)$ as $h\to 0$.
	For sufficiently small $h$, a near-optimal harvesting-seeding strategy of the controlled Markov chain is also a near-optimal harvesting-seeding policy of the continuous-time system \eqref{e4.2.2}.
\end{thm}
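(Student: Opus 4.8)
\emph{Proof proposal.} The plan is to establish \thmref{thm:conv3} by the Markov chain approximation method of \cite{Kushner92}, following the same blueprint as the proofs of \thmref{thm:conv1} and \thmref{thm:conv2}; the only genuinely new feature is that here the seeding control $Z$ is \emph{singular} (it has an unbounded rate), while the harvesting control $R$ is bounded and there is in addition the boundary reflection term $N$ coming from the restriction to $[0,U]^d$. First I would invoke the local consistency conditions of Assumption \ref{a:2}: for a controlled-diffusion step the conditional mean and covariance of the increment $X^h_{n+1}-X^h_n$ agree with $\big(b(x)-R^h_n\big)\Delta t^h(x,u)$ and $\sigma(x)\sigma'(x)\Delta t^h(x,u)$ up to $o(\Delta t^h)$; a seeding step on species $i$ adds $h\ei$ with zero elapsed time and incurs cost $g_ih$; and a reflection step on species $i$ subtracts $h\ei$ with zero elapsed time and no cost. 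From the chain $\{X^h_n\}$, the controls $\{u^h_n=(\pi^h_n,R^h_n)\}$ and the interpolation times $t^h_n=\sum_{k<n}\Delta t^h(X^h_k,u^h_k)$ I would build the piecewise-constant interpolations $X^h(\cdot),R^h(\cdot),Z^h(\cdot),N^h(\cdot)$ and a relaxed-control representation $m^h(dr\,ds)$ of the harvesting rate, and rewrite the dynamics in the form
\[
X^h(t)=x+\int_0^t\!\!\int_{[0,\mu]}\!\big(b(X^h(s))-r\big)\,m^h(dr\,ds)+\int_0^t\!\sigma(X^h(s))\,dw^h(s)+Z^h(t)-N^h(t)+\varepsilon^h(t),
\]
where $w^h$ is an approximating Wiener process and $\sup_{s\le T}|\varepsilon^h(s)|\to 0$ in probability for every $T$.

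The next step is tightness and passage to the limit, giving the upper bound. The state $X^h$ is confined to $[0,U+h]^d$, hence bounded; the relaxed controls $m^h$ are measures on the compact set $[0,\mu]$, hence automatically tight; the reflection processes $N^h$ are nondecreasing and, since $X^h$ is bounded while the drift, diffusion and seeding contributions are controlled, have uniformly (in $h$) bounded expected variation on compact intervals, hence are tight in the Skorokhod topology; and $Z^h$ is nondecreasing with $\E\!\int_0^\infty e^{-\delta s}g\cdot dZ^h(s)$ bounded uniformly in $h$ — otherwise $J^h(x,u^h)$ would be $-\infty$, incompatible with $V^h(x)\ge 0$, which is attained by never harvesting and never seeding — so $\{Z^h\}$ is tight as well. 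By Prohorov's theorem and the Skorokhod representation theorem I would pass to a subsequence along which $\big(X^h,m^h,Z^h,N^h,w^h\big)$ converges a.s. to $\big(X,m,Z,N,w\big)$ on a common probability space. Using the local Lipschitz continuity of $b,\sigma$ from Assumption \ref{a:1} and the standard martingale/stochastic-integral convergence arguments, the limit solves the reflected controlled equation \eqref{e4.2.2}, $w$ is a Brownian motion for the generated filtration, $N$ reflects only on $\{X_i=U\}$, and the induced pair $(R,Z)$ (with $R$ recovered from $m$) is admissible. Since $f$ is continuous (Assumption \ref{a:1}(b)) and $g$ is constant, both are bounded on $[0,U]^d$, so the discounted functionals converge and $\lim_k J^{h_k}(x,u^{h_k})=J(x,R,Z)\le V^U(x)$; as this holds along every convergent subsequence, $\limsup_{h\to 0}V^h(x)\le V^U(x)$.

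For the matching lower bound, given $\eta>0$ I would pick an $\eta$-optimal admissible pair $(R,Z)$ for the reflected problem defining $V^U$, first approximate it by one that is piecewise constant on a fine time grid and whose seeding part has finitely many jumps of sizes in $h\mathbb{Z}_{\geq 0}$ (this costs only an extra $o(1)$ in the performance, uniformly as $h\to 0$, by continuity of $J$ in the control), and then encode it as an admissible chain control $u^h\in\mathcal{A}^h_x$ whose interpolations converge weakly to a process with the same law as $(X,R,Z,N)$. The weak-convergence and cost-continuity argument of the previous paragraph then yields $\liminf_{h\to 0}V^h(x)\ge\liminf_{h\to 0}J^h(x,u^h)=J(x,R,Z)\ge V^U(x)-\eta$; letting $\eta\downarrow 0$ gives $V^h(x)\to V^U(x)$. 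The final assertion is then immediate: for small $h$ an $\eta$-optimal control of $X^h_n$ has performance within $\eta+o(1)$ of $V^U(x)$, so its continuous-time interpolation is within $2\eta$ of optimal.

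The step I expect to be the main obstacle is the joint tightness and limit identification of the singular seeding processes $Z^h$ together with the reflection processes $N^h$. Both are nondecreasing, purely singular processes, they may act on the same boundary faces of $[0,U]^d$, and one must rule out the spurious limiting regime in which seeding mass and reflection mass cancel, as well as control the behaviour near the corners of $[0,U]^d$ where the reflection direction is not single-valued. The \emph{a priori} bound on $\E\int e^{-\delta s}g\cdot dZ^h$, a consequence of admissibility (Assumption \ref{a:1}(d)) and $V^h\ge 0$, is what prevents $Z^h$ from blowing up, while the Skorokhod-map estimates for reflected diffusions in \cite{B98,F16,LS84} are what give the required control of $N^h$. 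Once these singular terms are handled, the remainder is a routine application of the Kushner--Dupuis machinery exactly as in \thmref{thm:conv1} and \thmref{thm:conv2}.
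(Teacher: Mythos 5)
Your overall blueprint (local consistency, continuous-time interpolation, tightness, limit identification, then matching upper and lower bounds via $\eta$-optimal controls) is the right one and matches the paper's template, which proves Theorem \ref{thm:conv1} in detail and asserts that Theorems \ref{thm:conv2} and \ref{thm:conv3} follow by the same techniques. However, there is a genuine gap at exactly the point you flag as the main obstacle, and you do not supply the device that closes it. You claim $\{Z^h\}$ is tight because $\E\int_0^\infty e^{-\delta s}\, g\cdot dZ^h(s)$ is bounded uniformly in $h$, and that $\{N^h\}$ is tight because of uniformly bounded expected variation. Neither inference is valid in the Skorokhod $J_1$ topology on $D[0,\infty)$: seeding steps and reflection steps consume zero interpolated time ($\Delta t^h=0$), so arbitrarily many of them can be concatenated at a single interpolated instant, and $Z^h$ and $N^h$ then have jumps whose sizes do not shrink as $h\to 0$ and whose locations need not stabilize. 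A uniform bound on discounted total variation controls the mass but not the modulus $w'$ required for $J_1$-tightness, and without tightness of these singular components you cannot apply Prohorov/Skorokhod representation, nor the martingale and stochastic-integral convergence arguments you invoke, nor conclude $J^{h_k}(x,u^{h_k})\to J(x,R,Z)$.

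The paper's resolution (carried out for the symmetric case in Definition \ref{def:1} and Theorems \ref{thm:thm}, \ref{thm:thm4.4} and \ref{thm:r}) is the ``stretched-out'' time rescaling of \cite{Kushner91, Budhiraja07}: one assigns duration $h$ (instead of $0$) to each singular step, so that in rescaled time the increment of the singular process over an interval of length $\rho$ is bounded by $d(h^2+\rho^2)$ in mean square, which does give $C$-tightness; one then proves the limit dynamics in rescaled time, shows the rescaled clock $\wdh T(t)\to\infty$ a.s. (this is where the uniform integrability of the singular increments is actually used), and finally inverts the time change to recover \eqref{e4.2.2} and the convergence of the performance functionals. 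For Theorem \ref{thm:conv3} this rescaling must be applied jointly to the seeding steps and the reflection steps. Your proposal omits this mechanism entirely, so as written the tightness and limit-identification steps — and hence both the upper and lower bounds — do not go through.
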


\section{Numerical Examples}\label{sec:fur}
In this section we explore various relevant scenarios and see how our numerical approximation scheme can provide fundamental insights into the optimal harvesting and seeding of populations.
\subsection{Single species system.}
We first look at a system which has one single species that is driven by a logistic stochastic differential equation (\cite{AS98, EHS15, HNUW18}). The dynamics that includes harvesting and seeding will be given by
\beq{e5.6.1}
d X(t) = X(t)\big(b_1-b_2X(t) \big)dt + \sigma X(t)dw(t)-dY(t) + d Z(t).
\eeq
Here $b_1$ is the per-capita growth rate, $b_2$ is the per-capita competition rate and $\sigma^2>0$ is the per-capita variance of the environmental fluctuations.

Let $\lambda$ and $\mu$ be the maximum seeding and harvesting rates, so that $0\le\lambda\le \infty$ and $0\le \mu\le \infty$.

We first look at the case $\lambda<\infty$ and $\mu=\infty$.
For an admissible strategy $(Y, C)$ we have \beq{e5.6.2}J(x, Y, C)=\E\left[\int_0^\infty e^{-\delta s} fdY(s)-\int_0^\infty e^{-\delta s} gC(s)ds\right].\eeq
Based on the algorithm constructed above and in Appendix \ref{sec:alg}, we carry out the computation by using the methods in \cite[Chapter 6]{Kushner92}. At each level $x=h,2h, \dots, U$ and $n$th iteration, denote by $u(x, n)=\big(\pi(x, n), c(x, n)
\big)$ the control one chooses, where $\pi(x, n)=1$ if there is harvesting, $\pi(x, n)=0$ if there is seeding.
We initially let $\pi(x, 0)=1$ and $c(x, 0)=0$ for all $x$ and we try to find better harvesting-seeding strategies. The initial harvesting-seeding policy is
$(Y_0, C_0)$, the policy which drives the system to extinction
immediately and has no seeding. Note that
$$J(x,  Y_0, C_0)=fx$$ for all $x$
and $$V_0^h(x)=fx, \quad x = 0, h, 2h, \dots, U.
$$

 \begin{figure}[h!tb]
 	\begin{center}
 		\includegraphics[height=2in,width=6in]{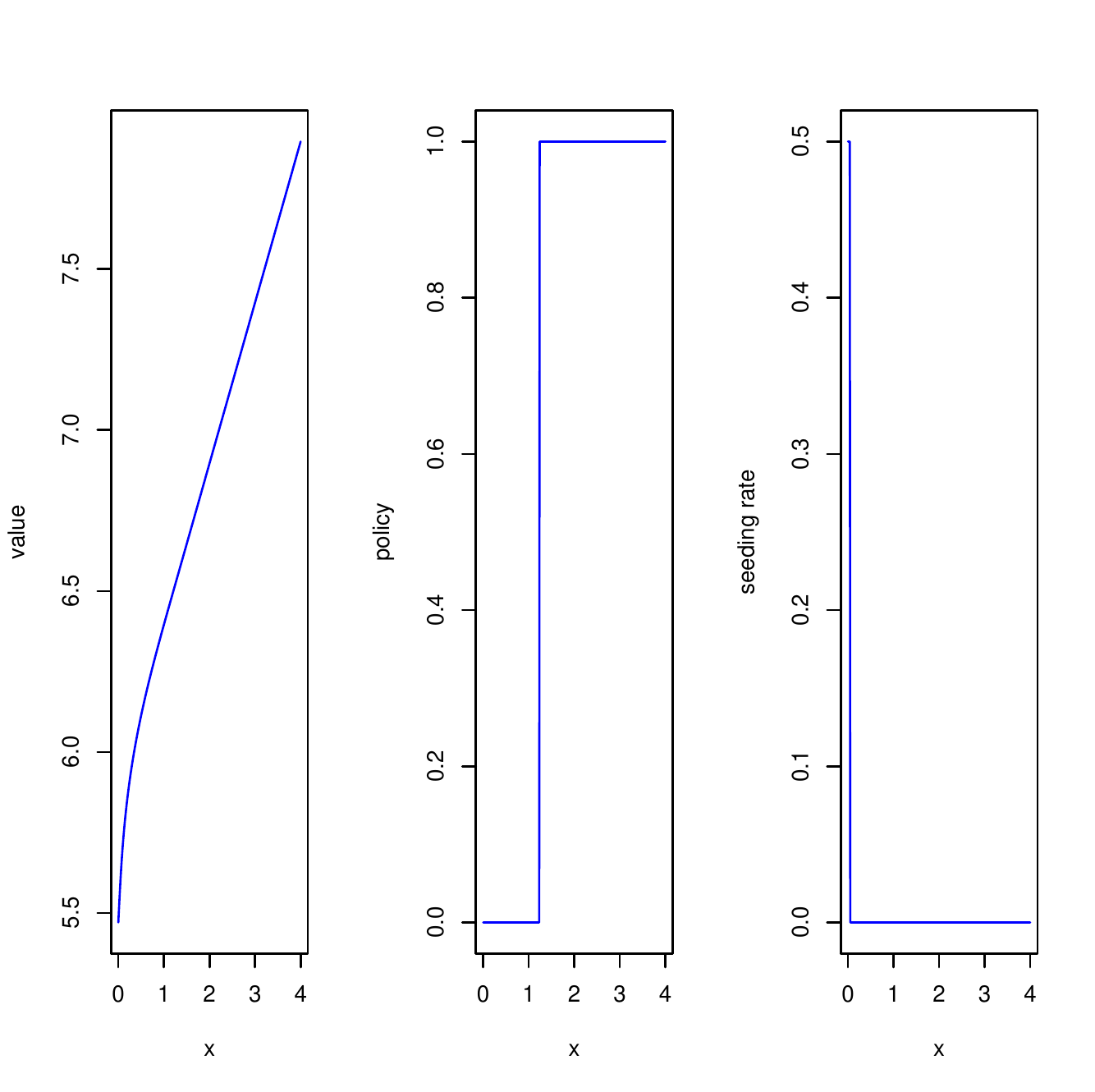}
 		\caption{Value function (left), optimal policy (middle, $1$:  harvesting,  $0$:
seeding), and optimal seeding rate (right) when $\lambda =0.5, \mu =\infty$} \label{fig1}\end{center}
 \end{figure}

 \begin{figure}[h!tb]
 	\begin{center}
 		\includegraphics[height=2in,width=6in]{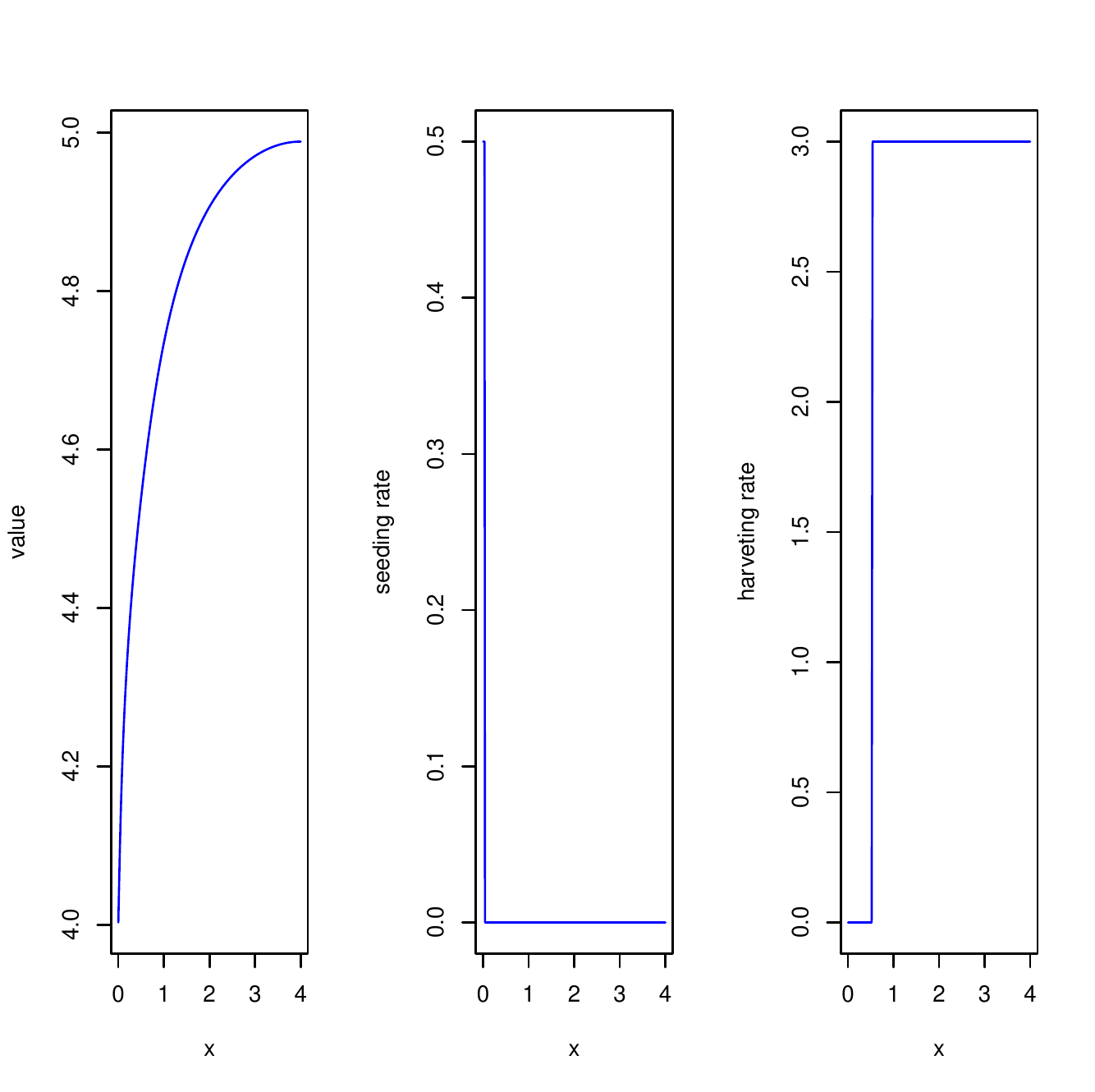}
 		\caption{Value function (left), optimal harvesting rate (middle), and optimal seeding rate (right) when $\lambda =0.5, \mu =3$} \label{fig2}\end{center}
 \end{figure}

We find an improved value $V^h_{n+1}(x)$
and record the updating optimal control by
$$u(x, n)=\argmax \left\{(i, c): V^{h, i, c}_{n+1} (x)\right\} ,\quad
V^h_{n+1}(x)= V^{h, u(x,  n)}_{n+1} (x),$$
where
\bea
\aad V^{h, 1, c}_{n+1}(x)=V^h_n(x-h) +  fh,\\
\aad V^{h, 0, c}_{n+1} (x) =e^{-\delta \Delta t^h(x, 0, c) }\Big[ V^{h}_n (x+h) p^h \big(x, x+h| (0, c) \big)\\
\aad \qquad\quad  \qquad +V^{h}_n  (x-h) p^h \big( x, x-h| (0, c) \big) +  V^{h}_n (x
) p^h \big(x, x | (0, c) \big) - gc  \Delta t^h(x, 0, c)
\Big].
\eea
The numerical algorithm alternates between policy iterations and value iterations
until the
increment
$V^h_{n+1}\cd-V^h_n\cd$
reaches
some tolerance level. The error tolerance is chosen to be $10^{-7}$. We pick the parameters
\begin{equation*}
 b_1=3,\quad  b_2=2, \quad \sigma=2, \quad \delta=0.05,
\quad  f(x)\equiv0.5,\quad g(x)\equiv 2.5,  \quad U=4.
\end{equation*}
Note that $b_1-\frac{\sigma^2}{2}>0$, so that the species survives in the absence of harvesting and seeding.

For the first numerical experiment, take
   $\lambda=0.5$ and $\mu=\infty$.
Figure \ref{fig1} shows the value function $V(x)$ as a function of the population size $x$, gives the optimal harvesting-seeding policies, and also provides the optimal seeding rates.
It can be seen from Figure \ref{fig1}  that  the optimal policy is a barrier strategy. There are thresholds $L_1$ and $L_2$, where $L_1=0.04$ and $L_2=1.25$ such that $[0, L_1]$ is the seeding region (the seeding rate is positive and maximal), $(L_1, L_2 )$ is the no-control region (no seeding and no harvesting), and $[L_2, U]$ is the harvesting region.

\begin{figure}[h!tb]
	\begin{center}
		\includegraphics[height=2in,width=4in]{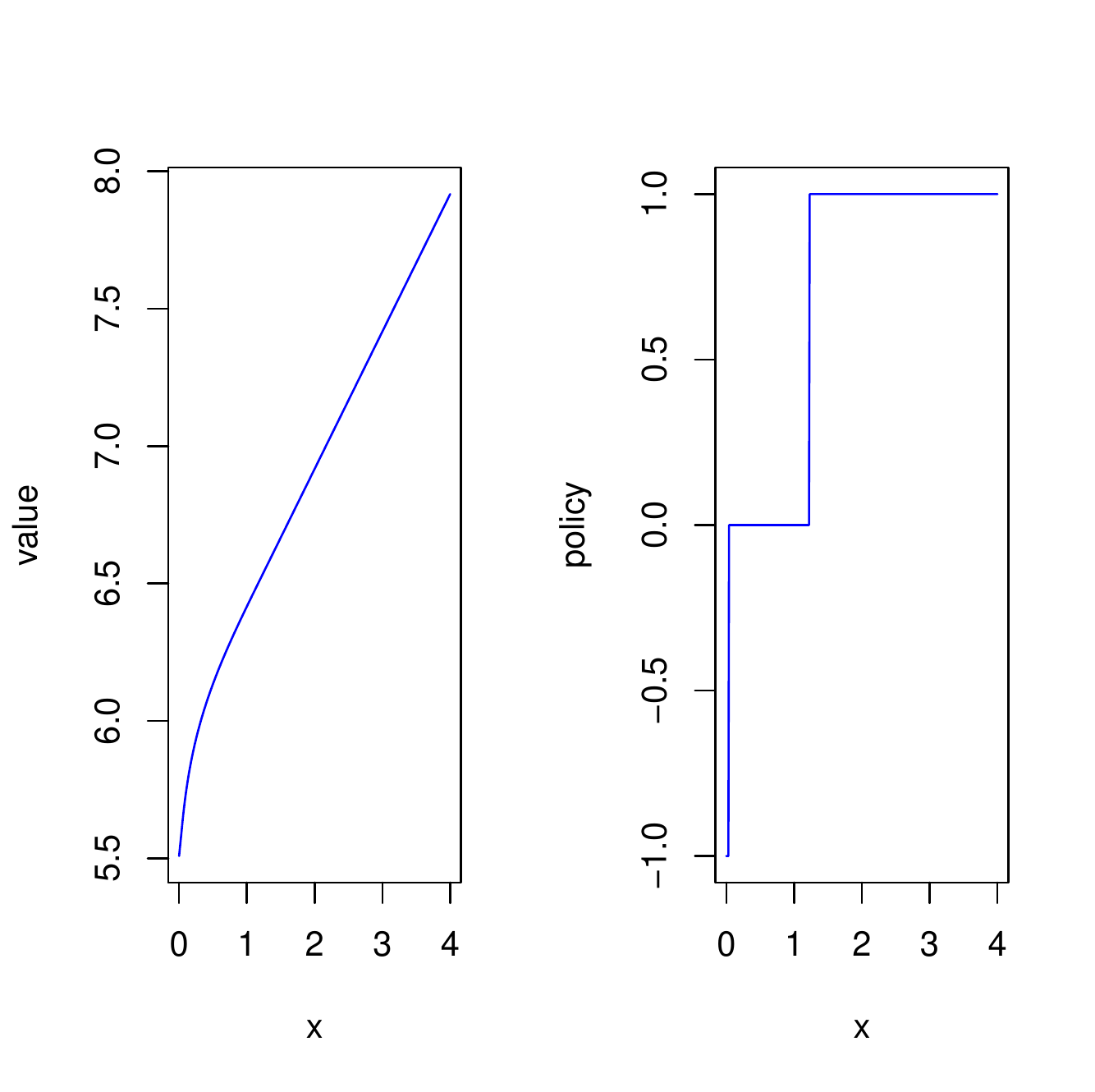}
		\caption{Value function (left) and optimal policy (right, 1: harvesting, 0: no control, -1: seeding) when $\lambda=\infty, \mu=\infty$} \label{fig3}\end{center}
\end{figure}

\begin{figure}[h!tb]
	\begin{center}
		\includegraphics[height=2in,width=6in]{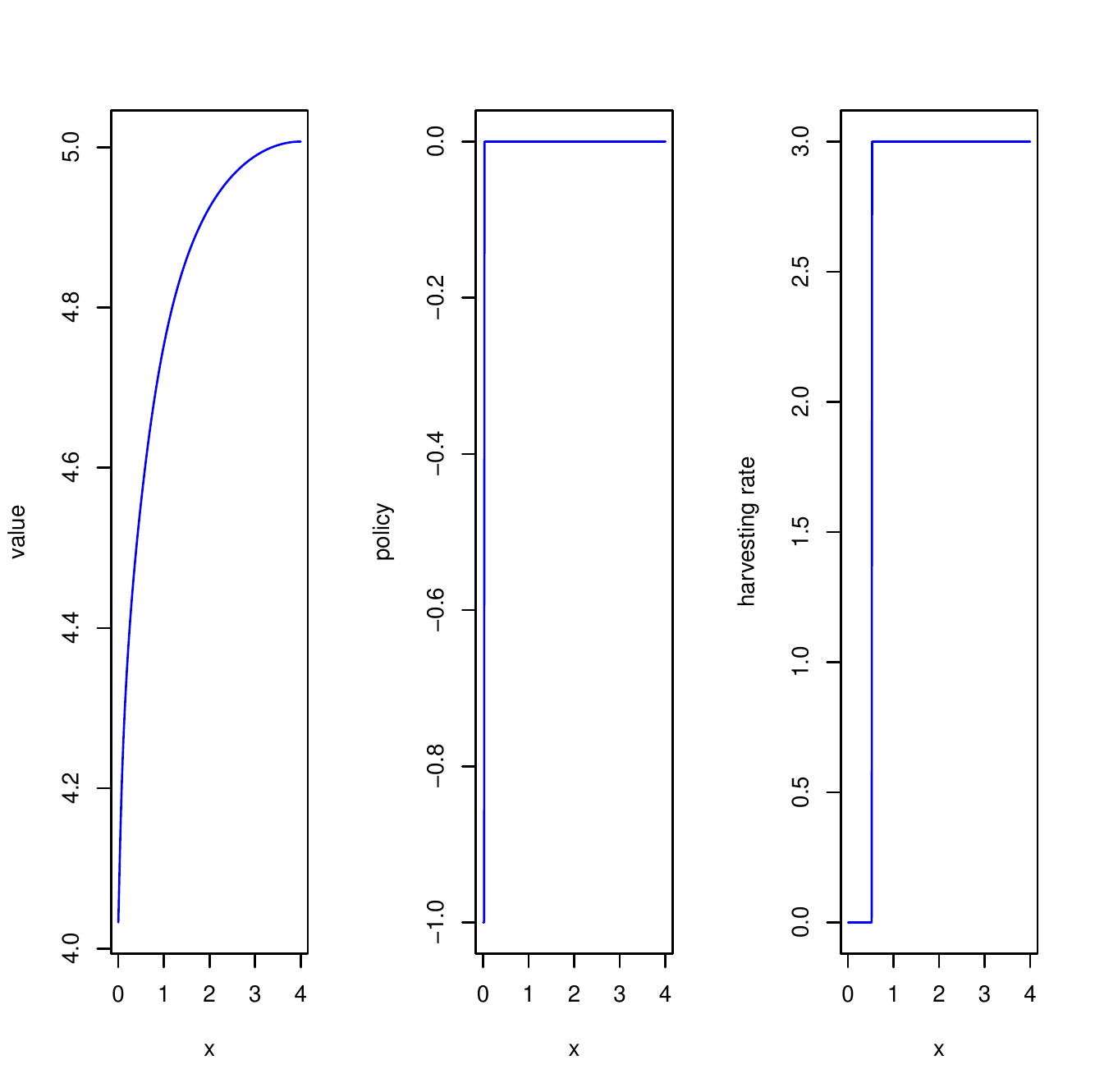}
		\caption{Value function (left), optimal policy (middle, -1: seeding, 0: harvesting), and optimal harvesting rate (right) when $\lambda=\infty, \mu=3$} \label{fig4}\end{center}
\end{figure}

Next, let $\lambda=0.5$ and $\mu =3$ and keep the other parameters as above. The numerical results are shown in Figure \ref{fig2}.  Similar to the preceding scenario, the optimal policy is a barrier strategy. In particular, we have $L_1=0.03$ for the seeding threshold and $L_2=0.54$ for the harvesting threshold. Note that this implies that one needs to harvest sooner if the harvest rate is bounded.
Moreover, it turns out that it is always optimal to harvest and seed with the maximal possible rates.

Figure \ref{fig3} shows the numerical experiment when both harvesting and seeding rates are infinite, i.e., $\lambda=\infty$ and $\mu =\infty$. For the policies in
Figure \ref{fig3}, $1$ denotes harvesting,  $-1$ denotes
seeding, and $0$ denotes no action. In this scenario, $L_1=0.03, L_2=1.23$. Figure \ref{fig4} looks at unbounded seeding $\lambda=\infty$ and bounded harvesting $\mu =3$. In Figure \ref
{fig4}, since the harvesting rate is bounded, we see that it is optimal to start harvesting at the lower threshold $L_2=0.54$ (compared to $L_2=1.23$) with the maximal rate. Moreover, $L_1=0.03$.

\begin{figure}[h!tb]
	\begin{center}
		\includegraphics[height=2.7in,width=4.7in]{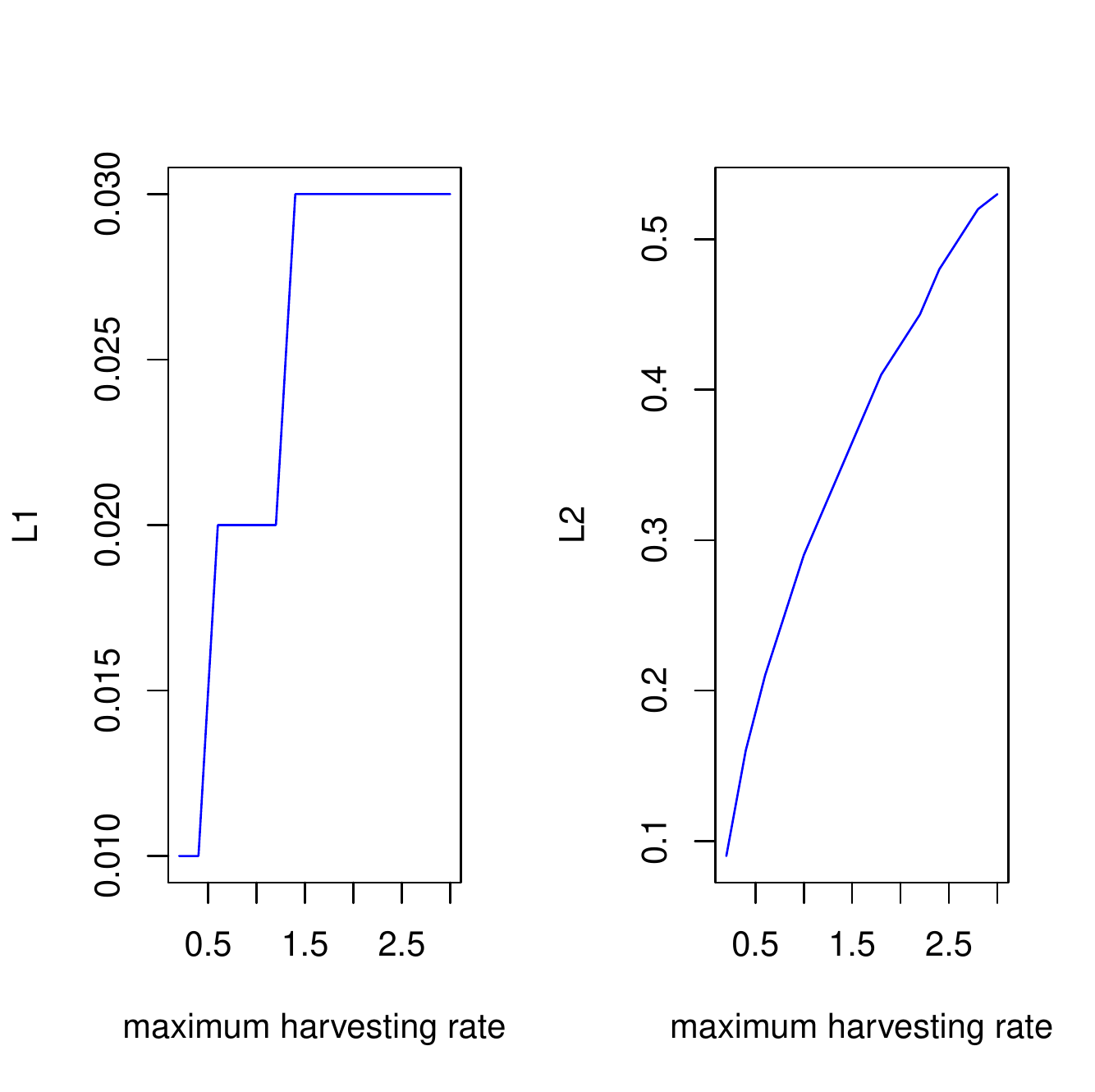}
		\caption{The thresholds $L_1$ (left) and $L_2$ (right) for $\lambda=\infty$ and $\mu\in [0, 3]$} \label{fig4A}\end{center}
\end{figure}

\begin{figure}[h!tb]
	\begin{center}
		\includegraphics[height=2.7in,width=4.7in]{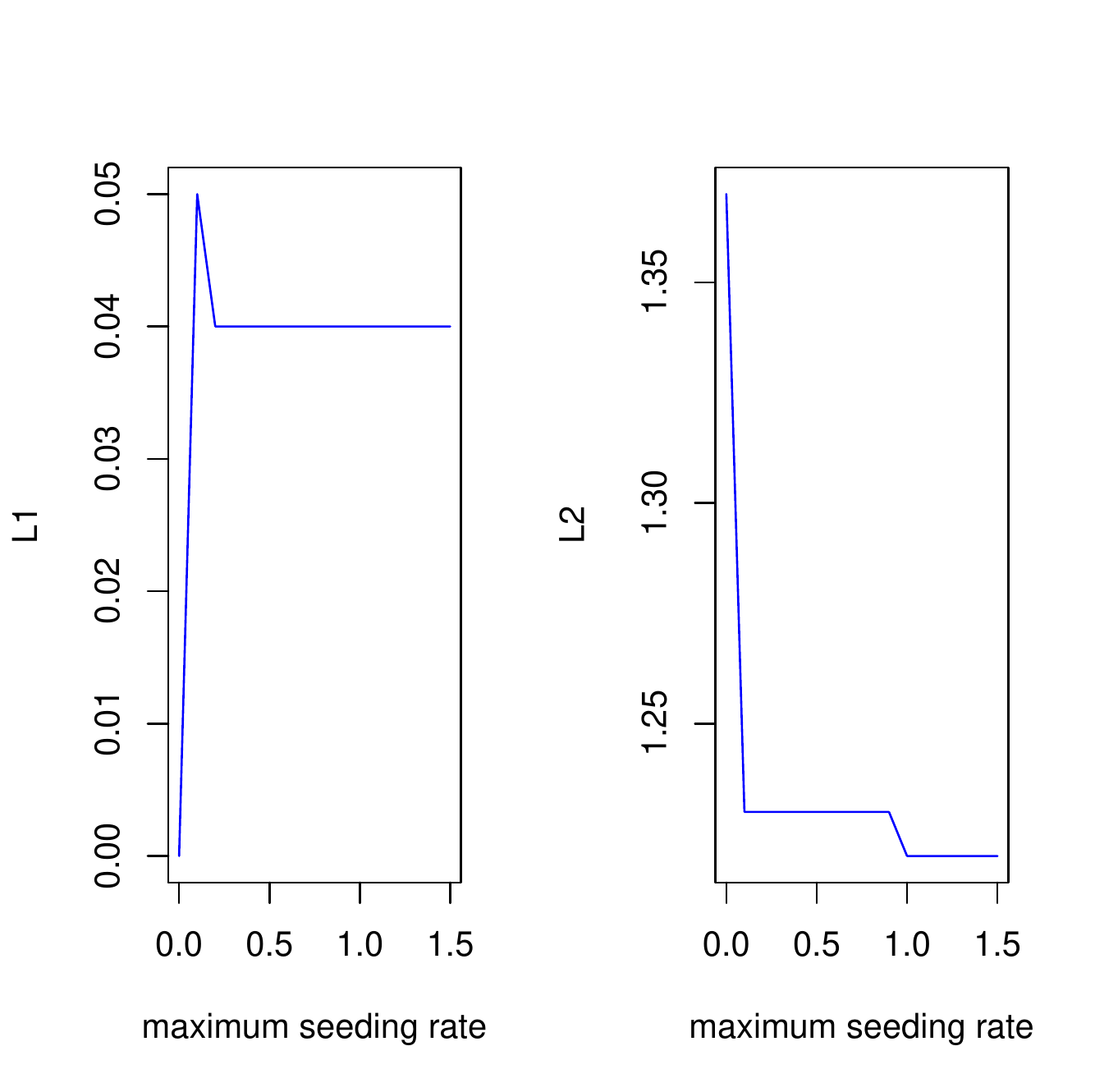}
		\caption{The thresholds $L_1$ (left) and $L_2$ (right) for $\lambda\in [0, 1.5]$ and $\mu=\infty$} \label{fig4B}\end{center}
\end{figure}

\begin{figure}[h!tb]
	\begin{center}
		\includegraphics[height=2.7in,width=4.7in]{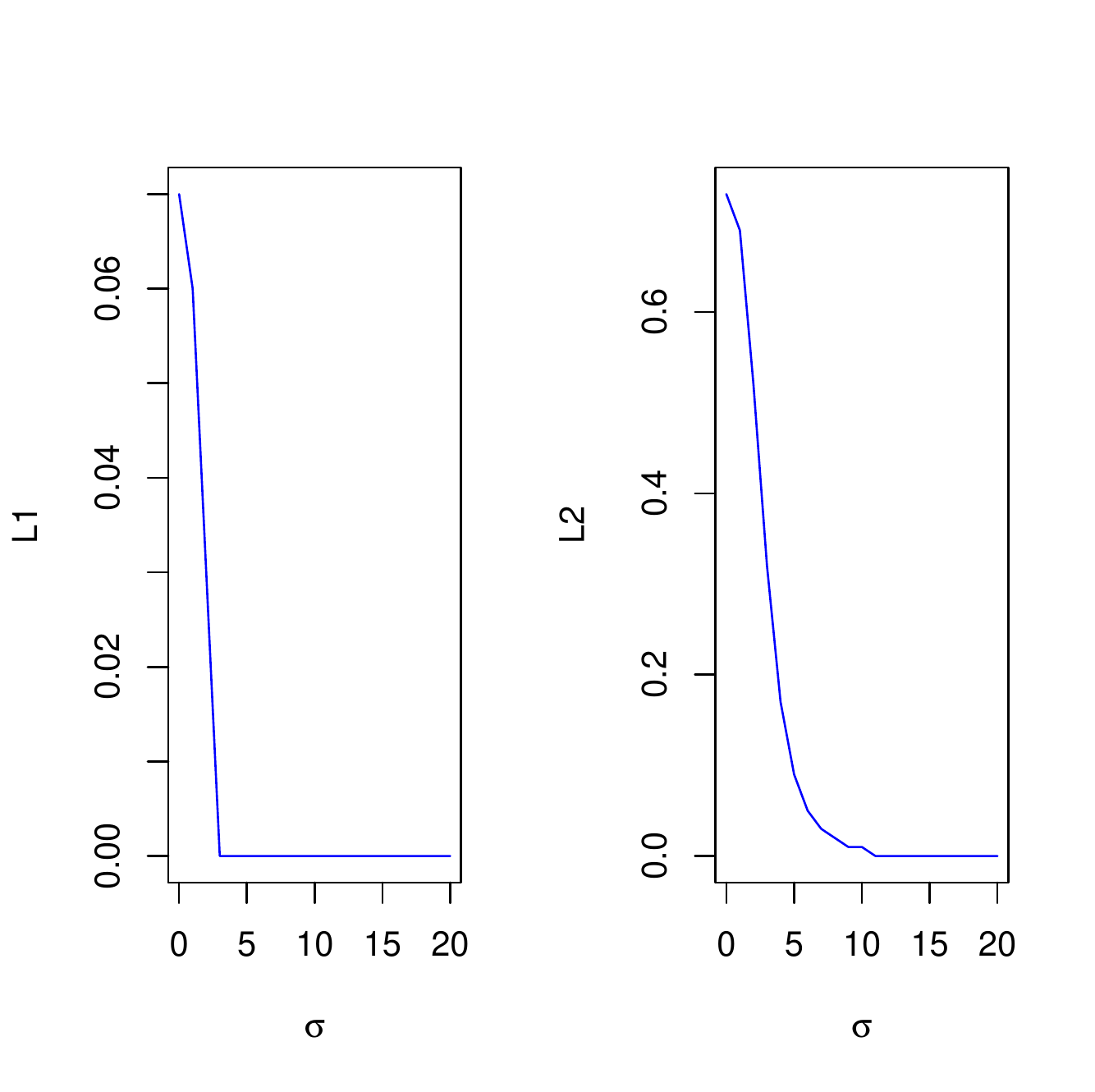}
		\caption{ The thresholds $L_1$ (left) and $L_2$ (right) for $\lambda=0.5$ and $\mu=3$ and $\sigma\in [1, 20]$} \label{fig4C}\end{center}
\end{figure}

\textit{\textbf{Biological interpretation}: In general if there is just one species, the optimal seeding-harvesting strategy will be of threshold type. There is a lower threshold $L_1>0$ and an upper threshold $L_2>L_1$. If the population size is below $L_1$ we seed at the maximal rate $\lambda\leq \infty$. In particular, if the seeding rate is infinite this means that the population gets to a level above $L_1$ immediately at $t=0$ and then never goes below $L_1$ - the seeding happens infinitely fast at $L_1$ so that the process reflects from $L_1$ into $(L_1, L_2)$. When the population size is between $L_1$ and $L_2$ we do not seed nor do we harvest. Once we are above the threshold $L_2$ we harvest at the maximal rate $\mu$. If the harvest rate is infinite, the population gets to a level below $L_2$ immediately at $t=0$ and then never goes above it again - the harvesting happens infinitely fast at $L_2$ so that the process reflects from $L_2$ into $(L_1,L_2)$. If both harvesting and seeding rates are infinite the process immediately enters $(L_1, L_2)$ at $t=0$ and stays there forever. If one rate is finite, the corresponding point ($L_1$ if finite seeding and $L_2$ is finite harvesting) wont be reflecting and the population can pass that threshold at a time $t>0$.
The thresholds $L_1, L_2$ depend on the seeding and harvesting rates as well as on the variance of the environmental fluctuations. Figure \ref{fig4A} provides the graph of the thresholds $L_1$ and $L_2$ as functions of the harvesting rate $\mu\in [0, 3]$ when $\lambda = \infty$. Both $L_1$ and $L_2$ increase with $\mu$ - as the harvesting rate increases we can wait longer until we start seeding or harvesting. Figure \ref{fig4B} provides the graph of the thresholds $L_1$ and $L_2$ as functions of $\lambda\in [0, 1.5]$ when $\mu = \infty$ - the seeding threshold $L_1$ first increases linearly after which it decreases and then becomes constant. When the seeding rate is very close to zero, it is hard to keep the species away from extinction and the seeding has to happen for a longer time (higher $L_1$). As the seeding rate increases, extinction becomes less likely and the threshold $L_1$ decreases. The harvesting threshold $L_2$ decreases with the seeding rate - a higher seeding rate makes extinction less likely and one can start harvesting at lower population levels. Figure \ref{fig4C} provides the graph of the thresholds $L_1$ and $L_2$ as functions of $\sigma\in [0, 20]$ when $\lambda = 0.5$ and $\mu=3$. The thresholds $L_1$ and $L_2$ are non-increasing functions of $\sigma$. It can be seen that when the noise intensity $\sigma$ is large, and the species goes extinct fast, it becomes optimal to harvest at the maximal possible rate at any population level and it is never optimal to seed anymore. This observation fits with the results by \cite{Alvarez98, Ky17} for harvesting problems without seeding.
}

\subsection{Two-species ecosystems}

\begin{exm}{\rm
		Consider two species competing according to the following stochastic Lotka-Volterra system
		\beq{}\barray
		\aad d X_1(t)  = X_1(t)\Big(b_1-a_{11}X_1(t) - a_{12}X_2(t)   \Big)dt + \sg_1X_1(t)dw_1(t)-dY_1(t)+dZ_1(t)\\
		\aad d X_2(t)  = X_2(t)\Big(b_2-a_{21}X_1(t) - a_{22}X_2(t)   \Big)dt +  \sg_2X_2(t)dw_2(t)-dY_2(t) + dZ_2(t),
		\earray
		\eeq

Here $b_1, b_2$ are the per-capita growth rates, $a_{12}, a_{21}$ the per-capita interspecific competition rates, $a_{11}, a_{22}$ the per-capita intraspecific competition rates and $\sigma_1^2, \sigma_2^2>0$ the per-capita variances of the environmental fluctuations. If there is no seeding or harvesting the dynamics of the above ecosystem has been studied extensively in the literature (\cite{TG80, KO81, SBA11, EHS15, HN18}).
Let $\lambda=(\lambda_1, \lambda_2)', \mu=(\mu_1, \mu_2)'$ be the maximum seeding rates and the maximum harvesting rates for the two species.
We set
\beq{}
\barray
\aad \delta=0.05, \quad f_1(x)\equiv1,\quad  f_2(x)\equiv 1.5, \quad g_1(x)\equiv4, \quad g_2(x)\equiv3,\\
\aad b_1=3,  a_{11}=2, a_{12}=1.5,  \sg_1=3, b_2=2,  a_{21}=2, a_{22}=2,  \sg_2=4,	U =4.
\earray
\eeq
Since the stochastic growth rates of the species are negative, $b_1-\sigma_1^2/2, b_2 -\sigma_2^2/2<0$ both species go extinct in the absence of seeding.
 \begin{figure}
 	\centering
 	\subfloat{{\includegraphics[width=6.5cm]{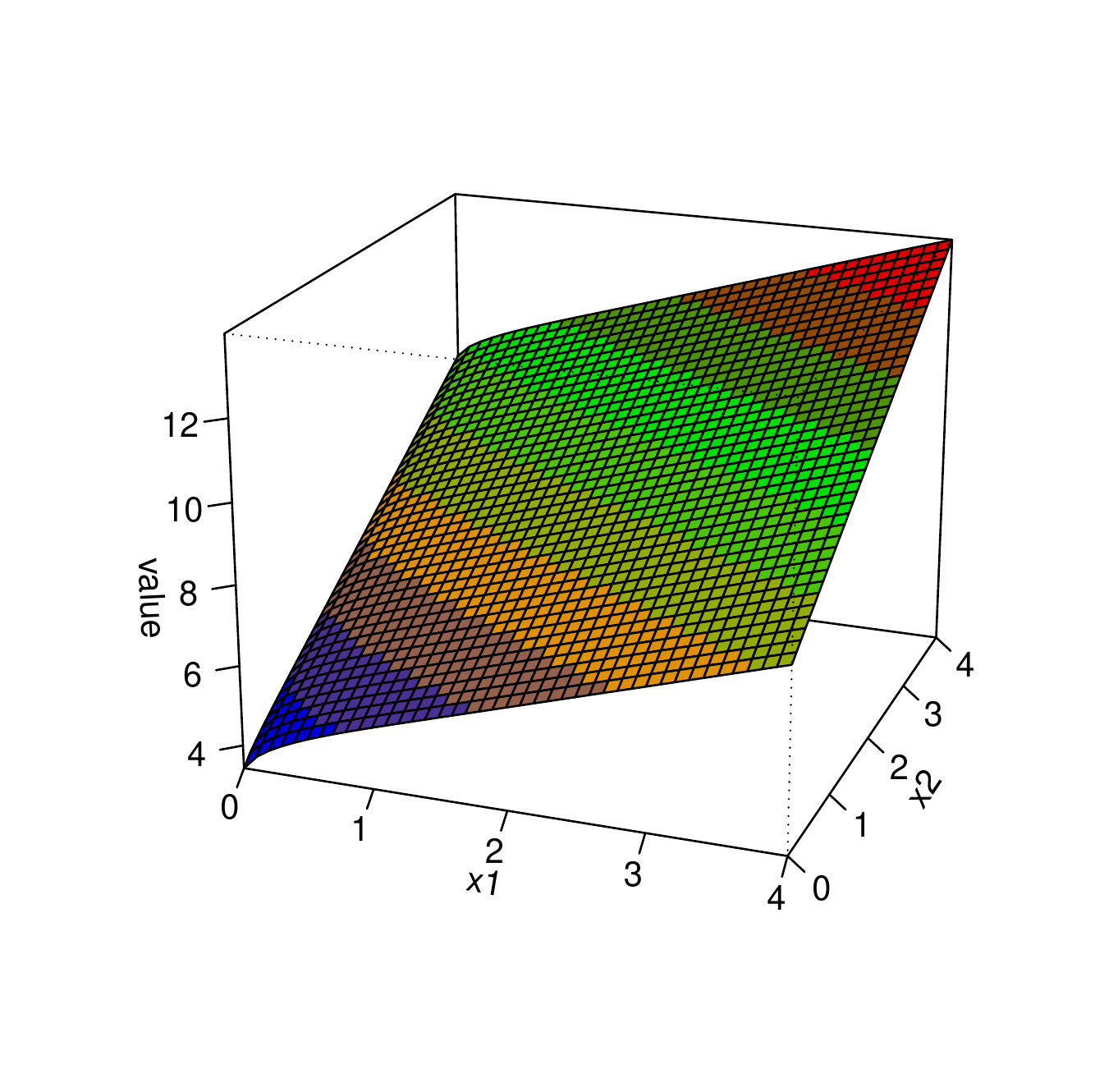} }}%
 	\quad
 	\subfloat{{\includegraphics[width=6.5cm]{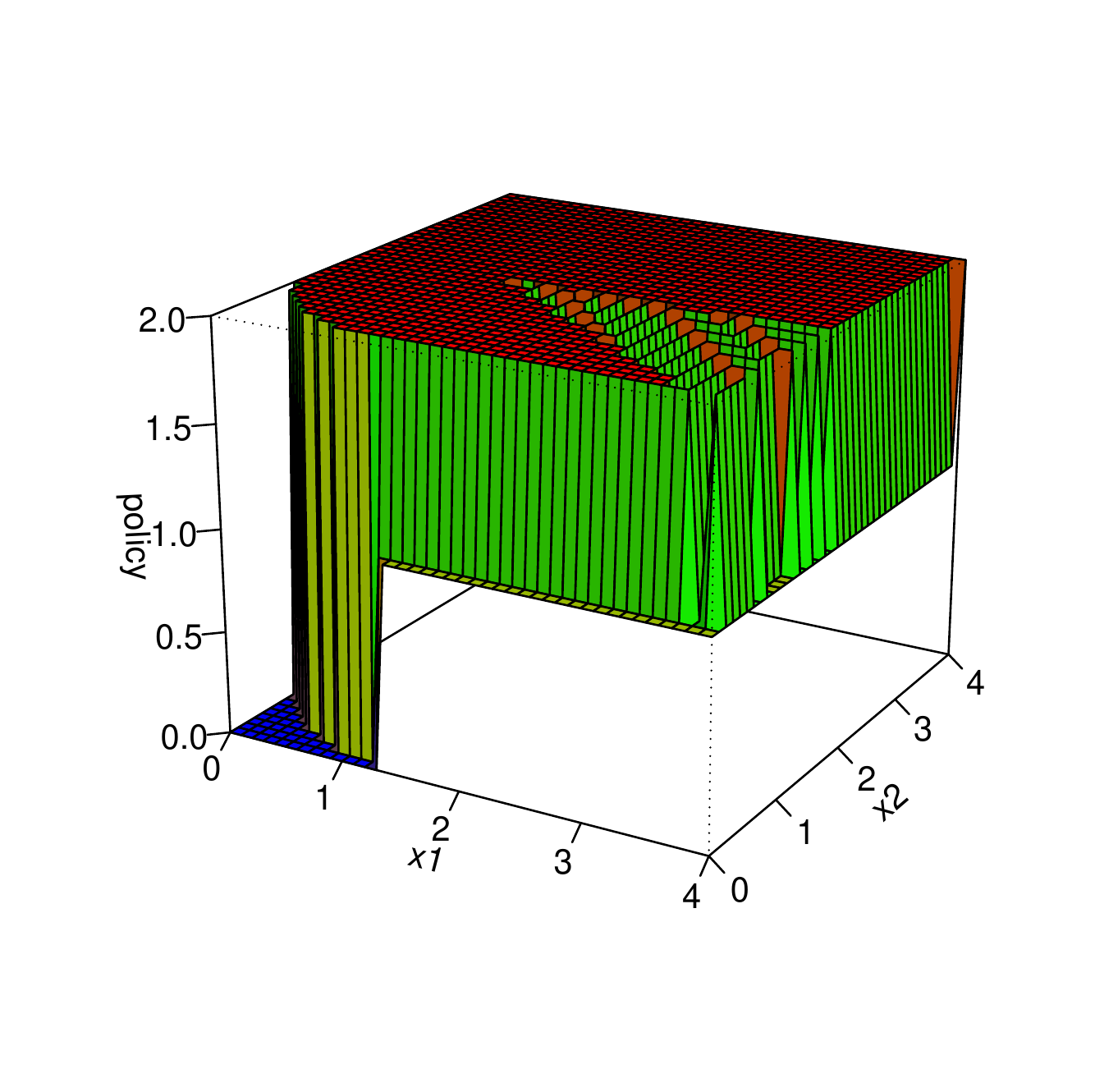} }}%
	\quad
	\subfloat{{\includegraphics[width=8.5cm]{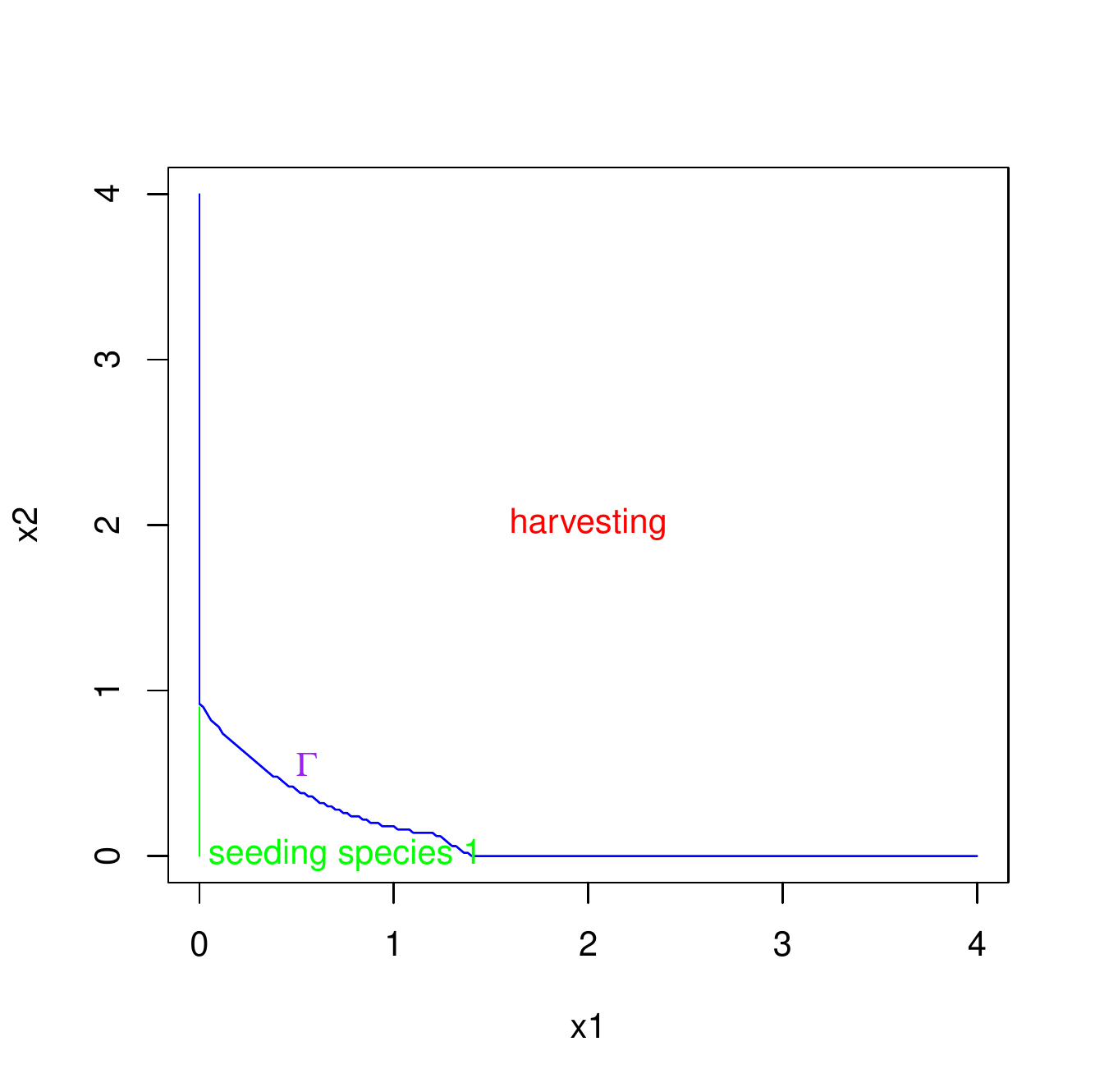} }}%
 	\quad
	
 	\caption{The value function and the optimal policy (1: harvesting of species 1, 2: harvesting of species 2,   0: seeding)}
 	\label{fig5}
 \end{figure}

 \begin{figure}
	\centering
	\subfloat{{\includegraphics[width=6.5cm]{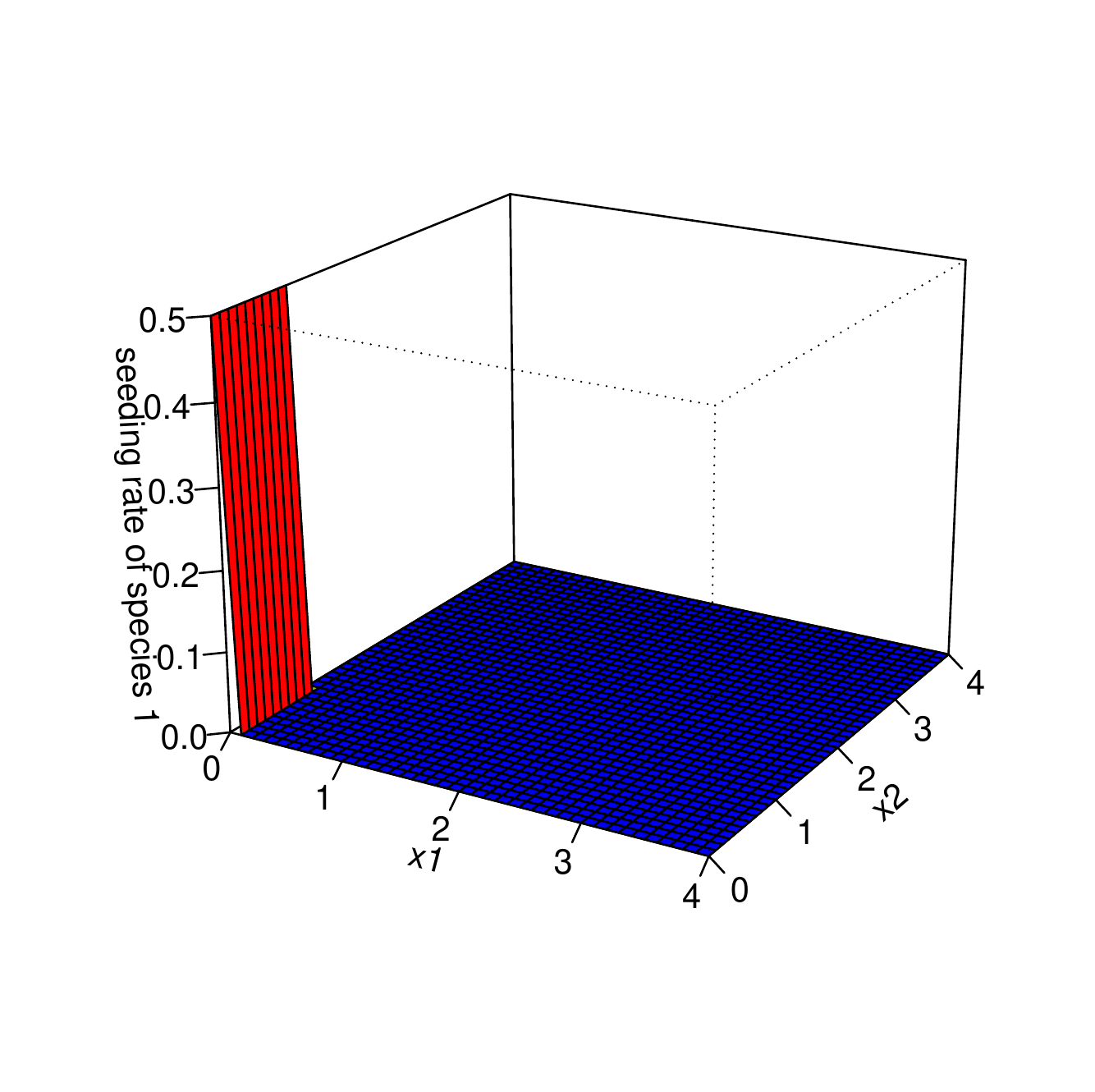} }}%
	\qquad
	\subfloat{{\includegraphics[width=6.5cm]{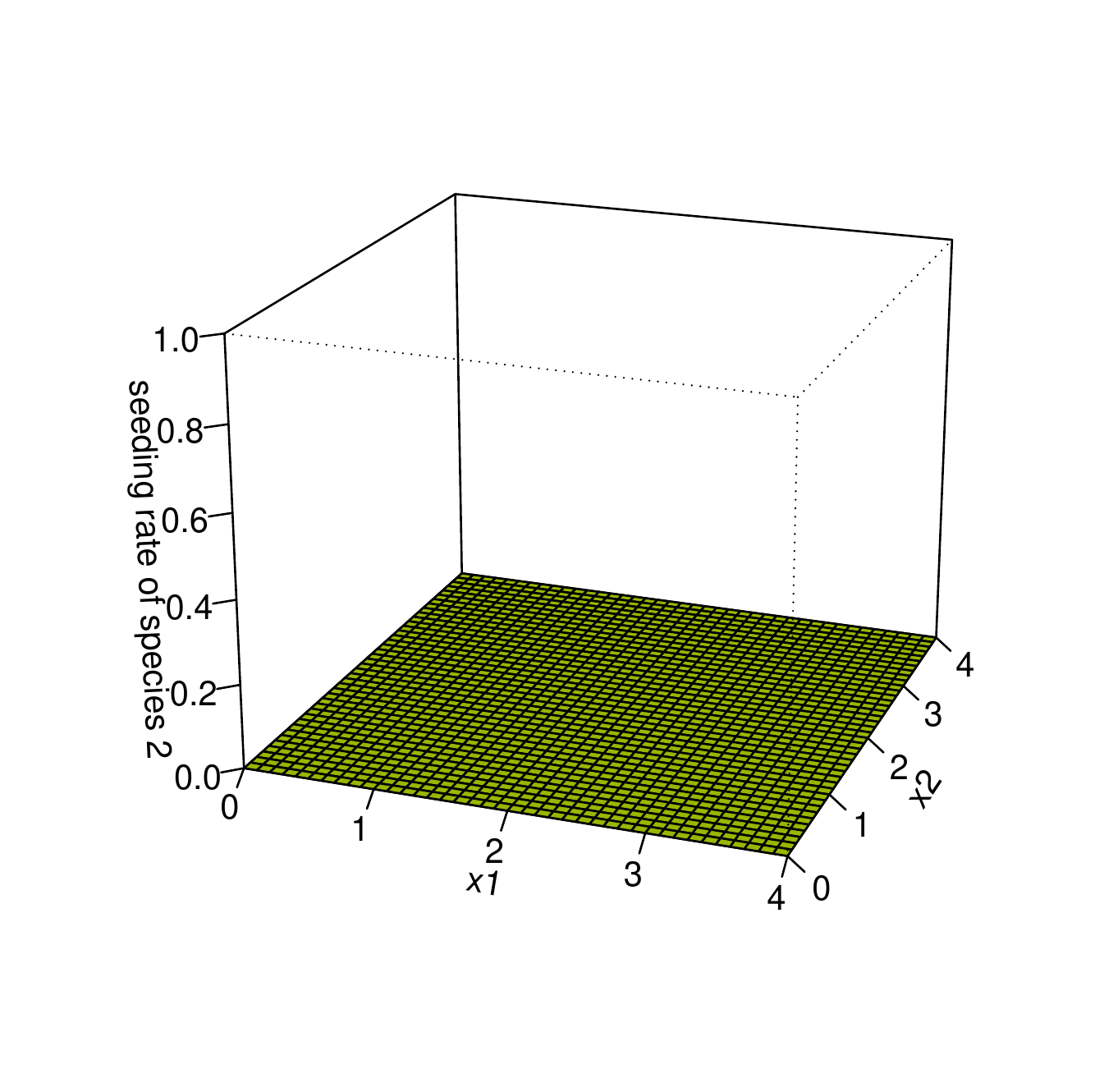} }}%
	\caption{The optimal seeding rates}
	\label{fig6}
\end{figure}

 	\begin{figure}[h!tb]
 	\centering		\subfloat{{\includegraphics[width=5.05cm]{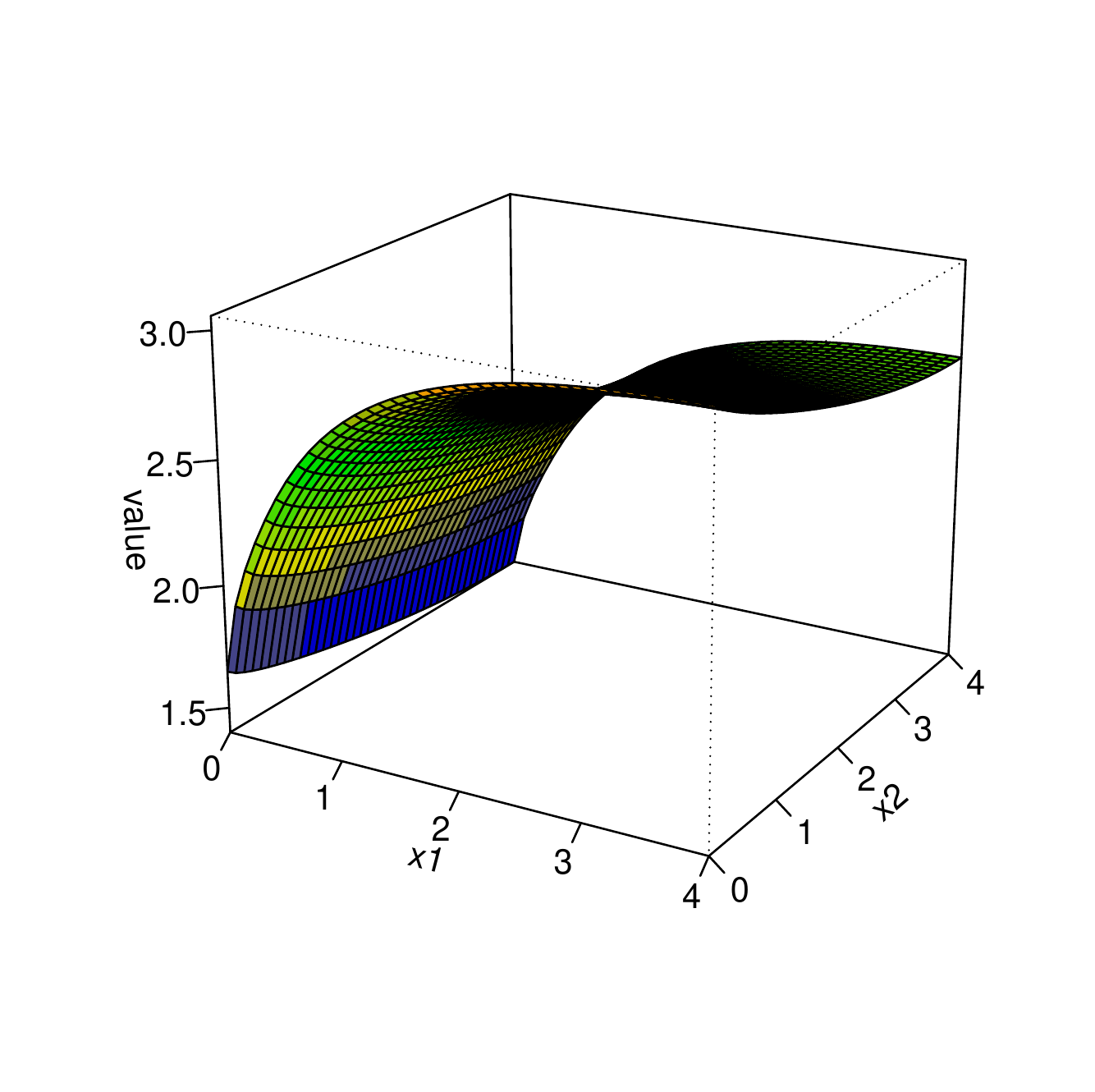} }}%
			\quad
\subfloat{{\includegraphics[width=5.05cm]{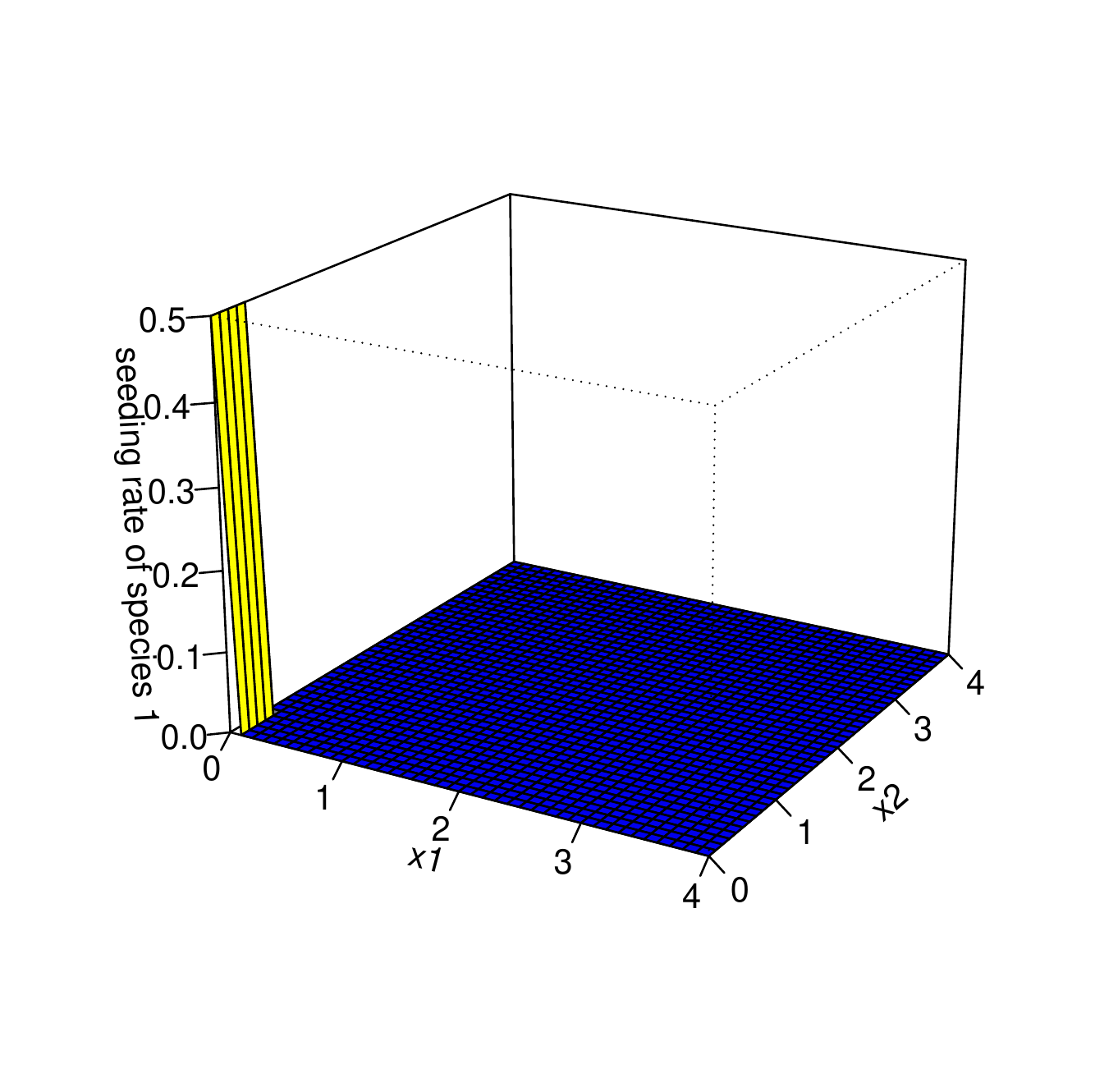} }}%
	\quad
\subfloat{{\includegraphics[width=5.05cm]{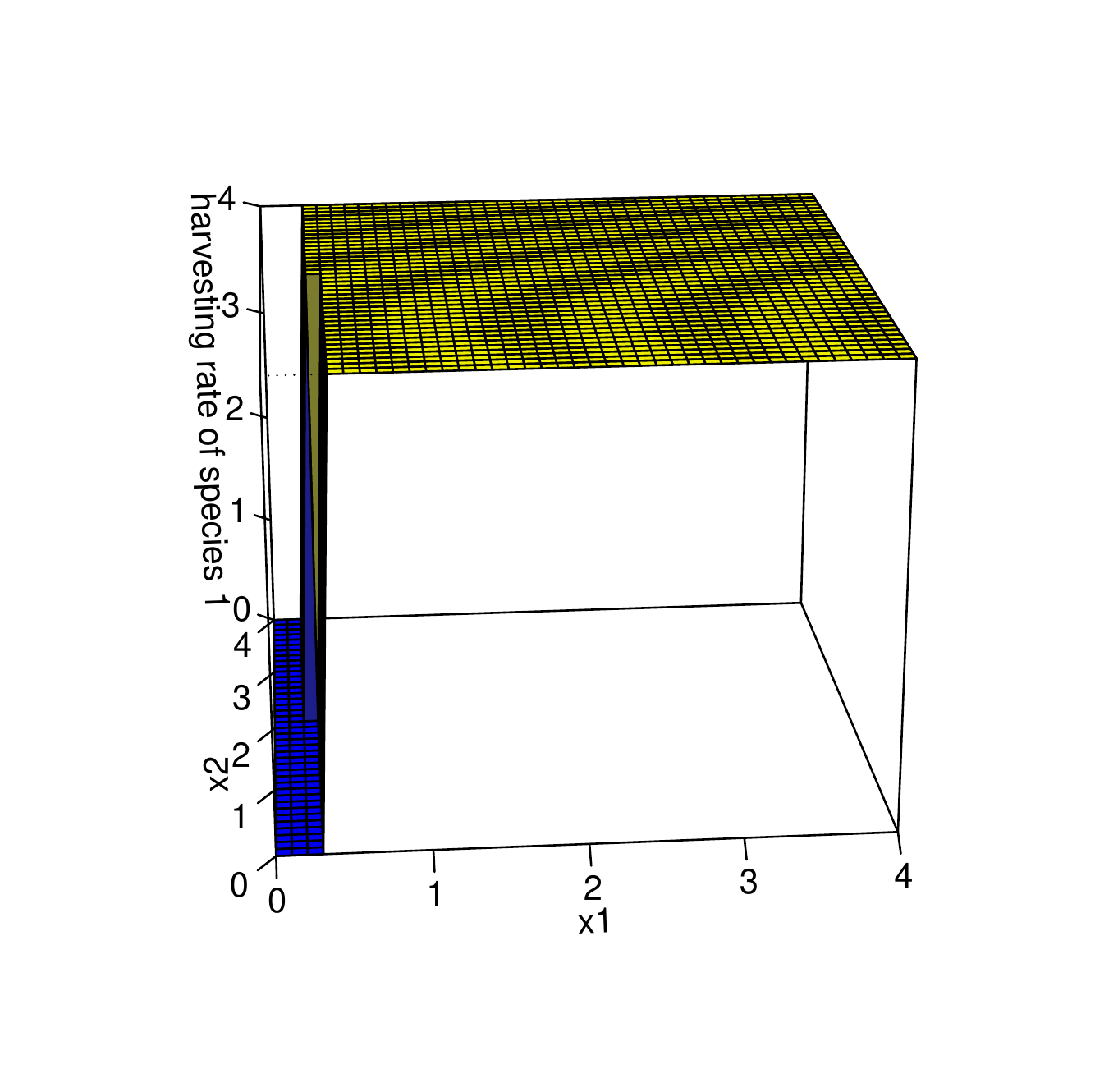} }}%
	\caption{The value function, optimal seeding, and harvesting rates of species 1}
	\label{fig7}
\end{figure}	

For the first experiment we take $$\lambda_1=\lambda_2=0.5, \quad \mu_1=\mu_2=\infty.$$
In Figure \ref{fig5} one can see the value function and the optimal harvesting-seeding policy as functions of the population sizes $(x_1,x_2)$. Here ``1" denotes the harvesting of species 1, ``2" the harvesting of species 2, and ``0" the seeding (including seeding zero). Figure \ref{fig6} provides the optimal seeding rates of the two species.

\textit{\textbf{Biological interpretation}:
We note that it is never optimal to seed species 2 -- the optimal seeding rate of species 2 is identically zero. There is a nonlinear curve $\Gamma$ (see Figure \ref{fig5} and Figure \ref{fig6}) such that it is optimal to harvest whenever the population sizes $(X_1(t), X_2(t))$ lie above $\Gamma$. Seeding takes place only when $(X_1(t), X_2(t))$ is in the green domain, which is close to 0. In particular, only species 1 should be seeded and we should seed with the maximal rate. This observation is well connected with the chosen system parameters. Note that $\frac{a_{11}}{b_1}=\frac{2}{3}<\frac{2}{2}=\frac{a_{21}}{b_2}$ and $\frac{a_{12}}{b_1}=\frac{1.5}{3}<\frac{2}{2}=\frac{a_{22}}{b_2}$. The intraspecific competition within species $1$, given by $\frac{a_{11}}{b_1}$, is smaller than the interspecific competition effect of species $1$ on
species $2$, given by $\frac{a_{21}}{b_2}$, and the interspecific competition effect of species $2$ on
species $1$, given by $\frac{a_{12}}{b_1}$, is smaller that the intraspecific competition within species $2$, given by $\frac{a_{22}}{b_2}$. Moreover, the stochastic growth rate of species $1$ is larger than the stochastic growth rate of species $2$: $b_1-\frac{\sigma_1^2}{2}> b_2-\frac{\sigma_2^2}{2}$. The environment is more favorable to species 1 than to species 2.
}

For the second example, we take $$\lambda_1=0.5, \quad \lambda_2=0, \quad \mu_1=4, \quad \mu_2=0.$$ We are not allowed to seed or harvest species 2. However,
because of the interactions between the two species, the optimal harvesting-seeding policy for the system will depend on the population sizes of both species.
In Figure \ref{fig7} one can see the value function, the optimal seeding rate, and the optimal harvesting rate of species 1.

\textit{\textbf{Biological interpretation}:
There exist lower and upper thresholds $0\leq L_1(x_2)\leq L_2(x_2)$ which depend on the population size of species $2$. Whenever the size of population $1$ is under $L_1(x_2)$ we seed species $1$ at the maximal rate. If the population size of species $1$ is above $L_2(x_2)$ we harvest this species at the maximal rate. Even in this case when we are only allowed to seed or harvest species 1, the optimal harvesting-seeding strategy is not a simple threshold strategy. Due to the interaction of the two species, the optimal policy will depend on the population sizes of both species.
One interesting observation (see Figure \ref{fig7}) is that for a fixed population size $x_1$ of species $1$, the value function is a decreasing function of $x_2$. When the size of $x_2$ increases, due to competition and the fact that we cannot harvest species $2$, the value function will decrease.
}

For the last experiment, we take $$\lambda_1=0, \quad \lambda_2=0.5, \quad \mu_1=0, \quad \mu_2=4, \quad \sg_2 = 2.5.$$
We are not allowed to seed or harvest species 1.
Figure \ref{fig8} provides the value function, the optimal seeding rate, and the optimal harvesting rate of species 2. Similarly to the preceding case (Figure \ref{fig7}), there are levels $L_1^*(x_1)$ and $L_2^*(x_1)$ depending on $x_1$ such that if the abundance of species 2 is larger than $L_2^*(x_1)$, one should harvest species 2 at the maximal rate. If the abundance of species 2 is below $L_1^*(x_1)$, one should seed species 2 using the maximal seeding rate.

\begin{figure}
	\centering
	\subfloat{{\includegraphics[width=5.05cm]{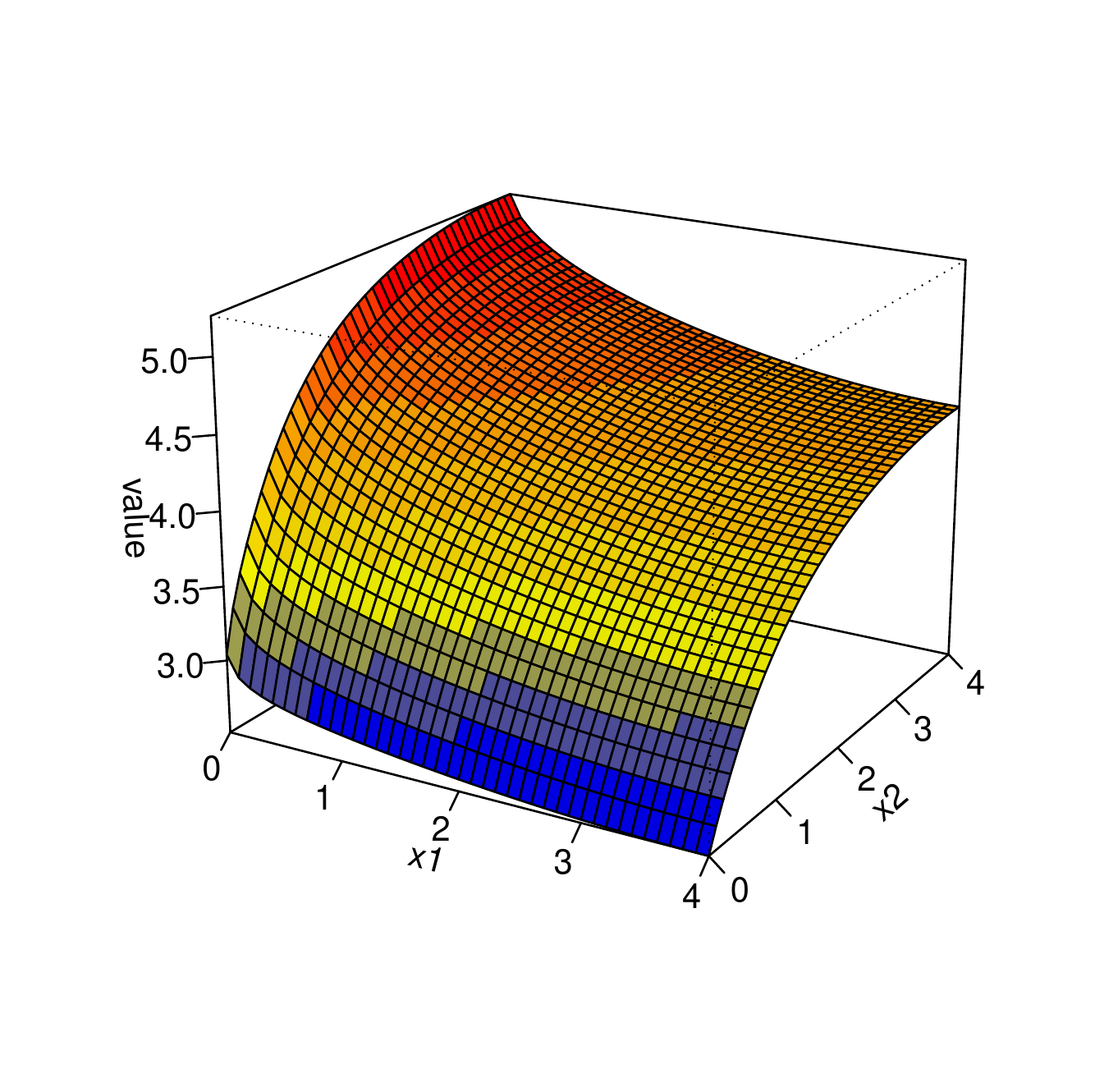} }}%
	\quad
	\subfloat{{\includegraphics[width=5.05cm]{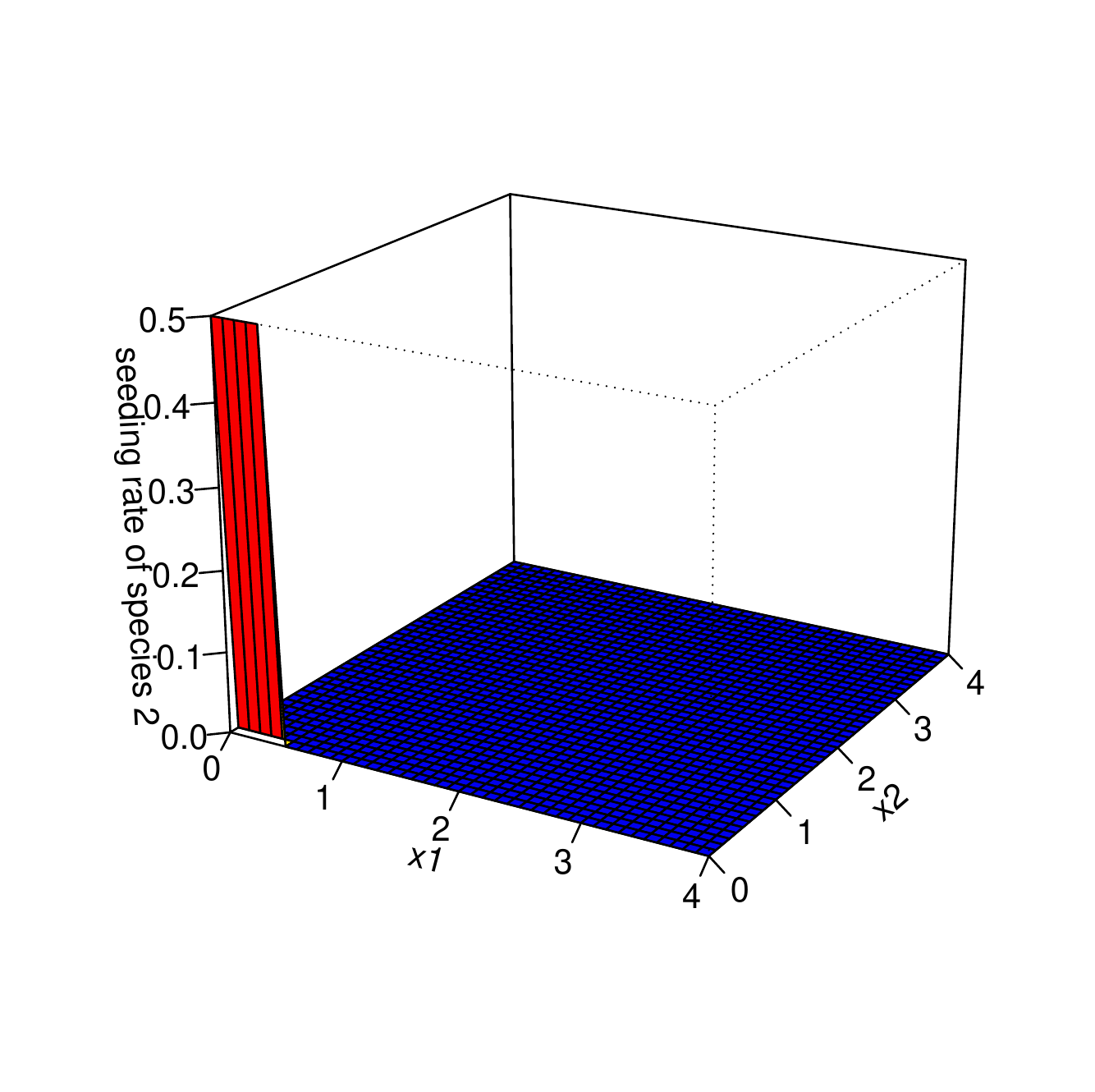} }}%
	\quad
	\subfloat{{\includegraphics[width=5.05cm]{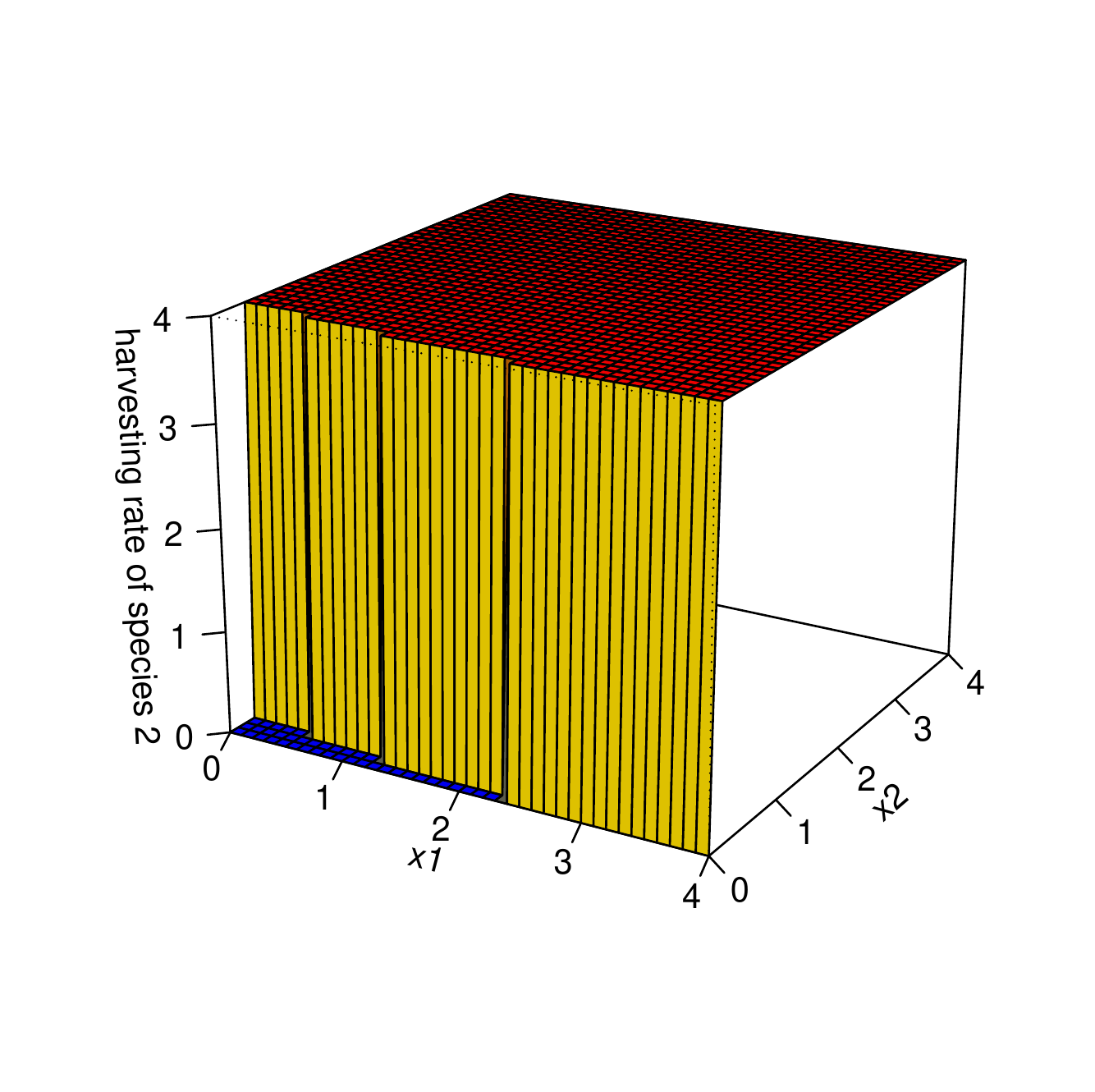} }}%
	\caption{The value function, optimal seeding, and harvesting rates of species 2}
	\label{fig8}
\end{figure}

}
\end{exm}

\begin{exm}{\rm
		Consider a predator-prey model where the predator has a Holling type 2 response and the prey satisfies a logistic equation. The dynamics is given by	
        \beq{}\barray
		\aad d X_1(t)  = X_1(t)\left(b_1-a_{11}X_1(t)- \dfrac{a_{12} X_1(t)}{b_3+X_1(t)} \right)dt + \sg_1X_1(t)dw_1(t) -dY_1(t)+  dZ_1(t),\\
		\aad d X_2(t)  = X_2(t)\left(-b_2 + \dfrac{a_{21} X_1(t)}{b_3+X_1(t)} - a_{22} X_2(t)   \right)dt + \sg_2X_2(t)dw_2(t) -dY_2(t)+  dZ_2(t),
		\earray
		\eeq
		where $X_1(t)$ and $X_2(t)$ denote the population sizes of the prey and that of the predator.  Let $\lambda=(\lambda_1, \lambda_2)'$ and $\mu=(\mu_1, \mu_2)'$ be the maximum seeding rates and the maximum harvesting rates of the prey and predator. We pick the coefficients to be $$\delta=0.05, f_1(x)\equiv 0.5, f_2(x)\equiv 0.75,  g_1(x)\equiv 3, g_2(x)\equiv 4, U=4,$$
and
		\begin{equation*}
		b_1=2,  a_{11}=1.2, a_{12}=1,  \sg_1=1.6,  b_2=1, b_3=1,   a_{21}=4, a_{22}=2,  \sg_2=1.8.
		\end{equation*}
		For the first numerical experiment, we take $$\lambda_1=\lambda_2=0.5, \quad \mu_1=\mu_2=\infty.$$ Figure \ref{fig9} shows the value function  and the optimal policy as a function of the population abundances $(x_1,x_2)$. Here ``1" denotes harvesting of species 1,  ``-1" the
		seeding of species 1, ``2" the harvesting of species 2, and ``0" the seeding of species 2 (the seeding rates are given in Figure \ref{fig10}).

\textit{\textbf{Biological interpretation}: The optimal seeding rate of the predator is identically zero -- it is never optimal to seed the predator. Moreover, one starts harvesting the predator at a low density -- it is optimal to keep the predator size low. This makes sense as the driving force of the dynamics is given by the prey species. The predator will always go extinct on its own. If one keeps the predator population low, the prey species can grow and one can then harvest this population as well.
There is a curve $\Gamma$ (Figure \ref{fig9}) such that $(X_1(t), X_2(t))$ is above this curve, it is optimal to harvest. There is seeding of the prey species when the $(X_1(t), X_2(t))$ is in the green domain, which is close to 0. We only seed the prey species when it is close to extinction (or is initially extinct).}	
	\begin{figure}
 	\centering
 \subfloat{{\includegraphics[width=6.5cm]{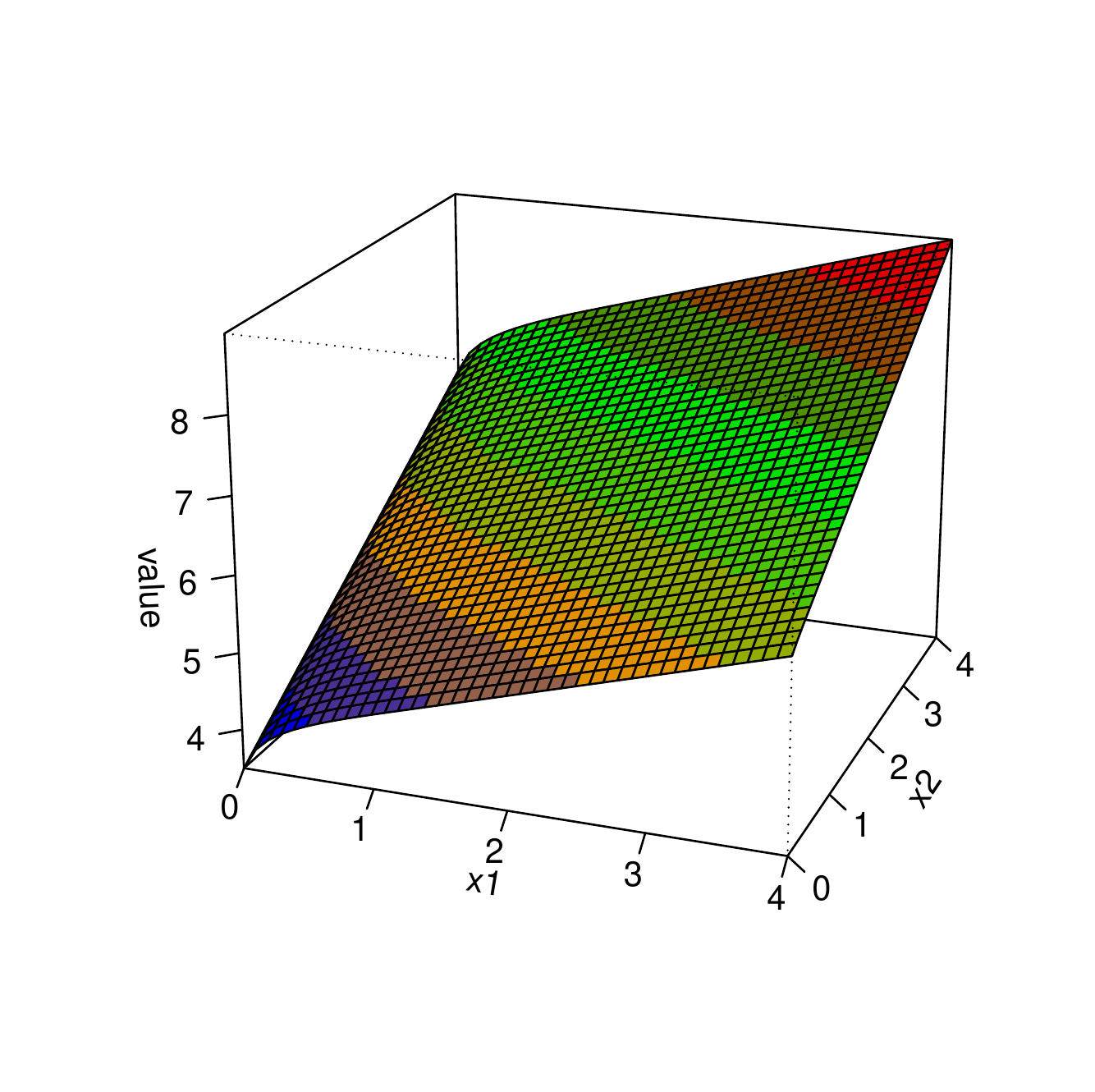} }}%
 	\quad
 	\subfloat{{\includegraphics[width=6.5cm]{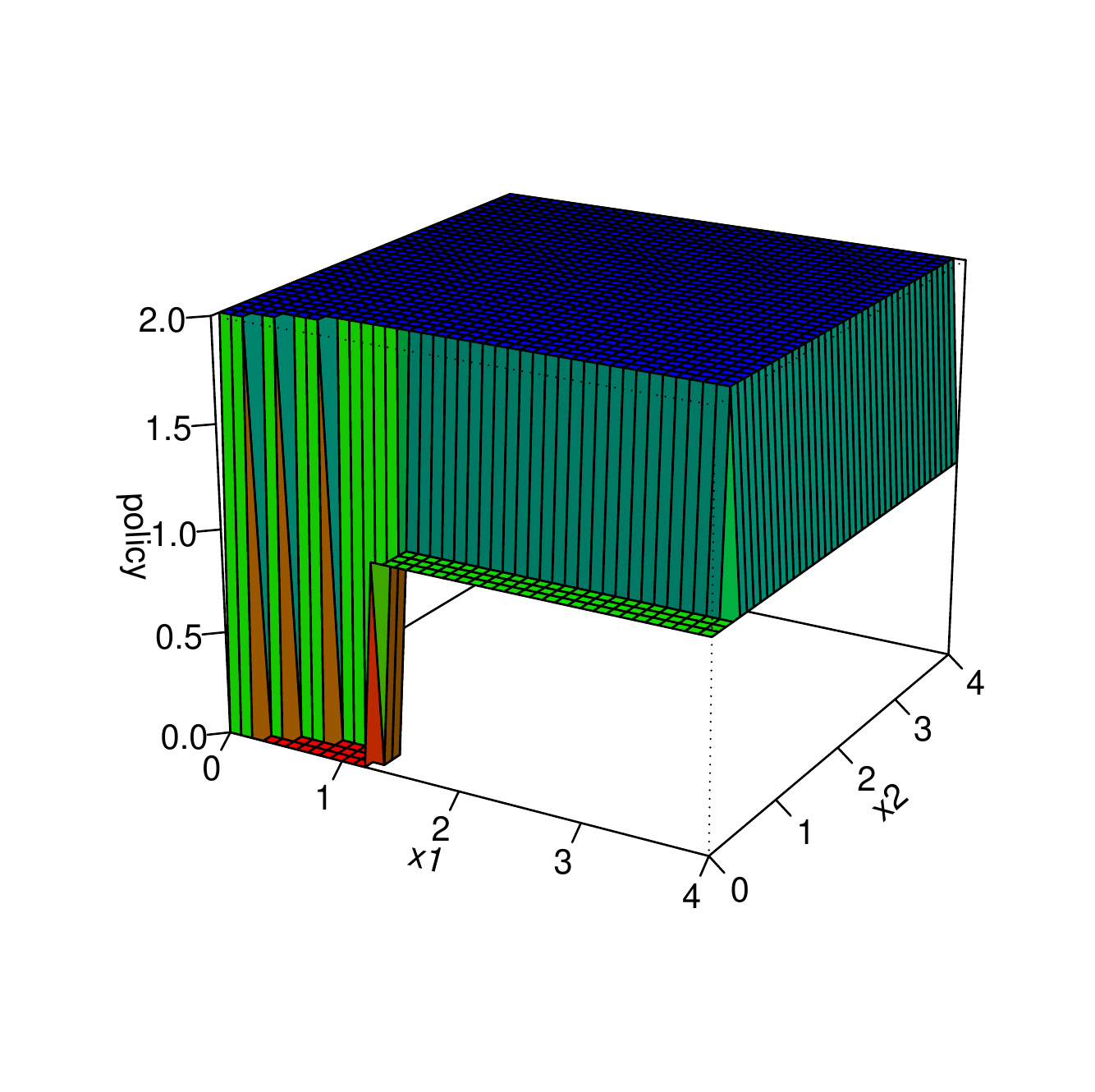} }}%
\quad
\subfloat{{\includegraphics[width=8.5cm]{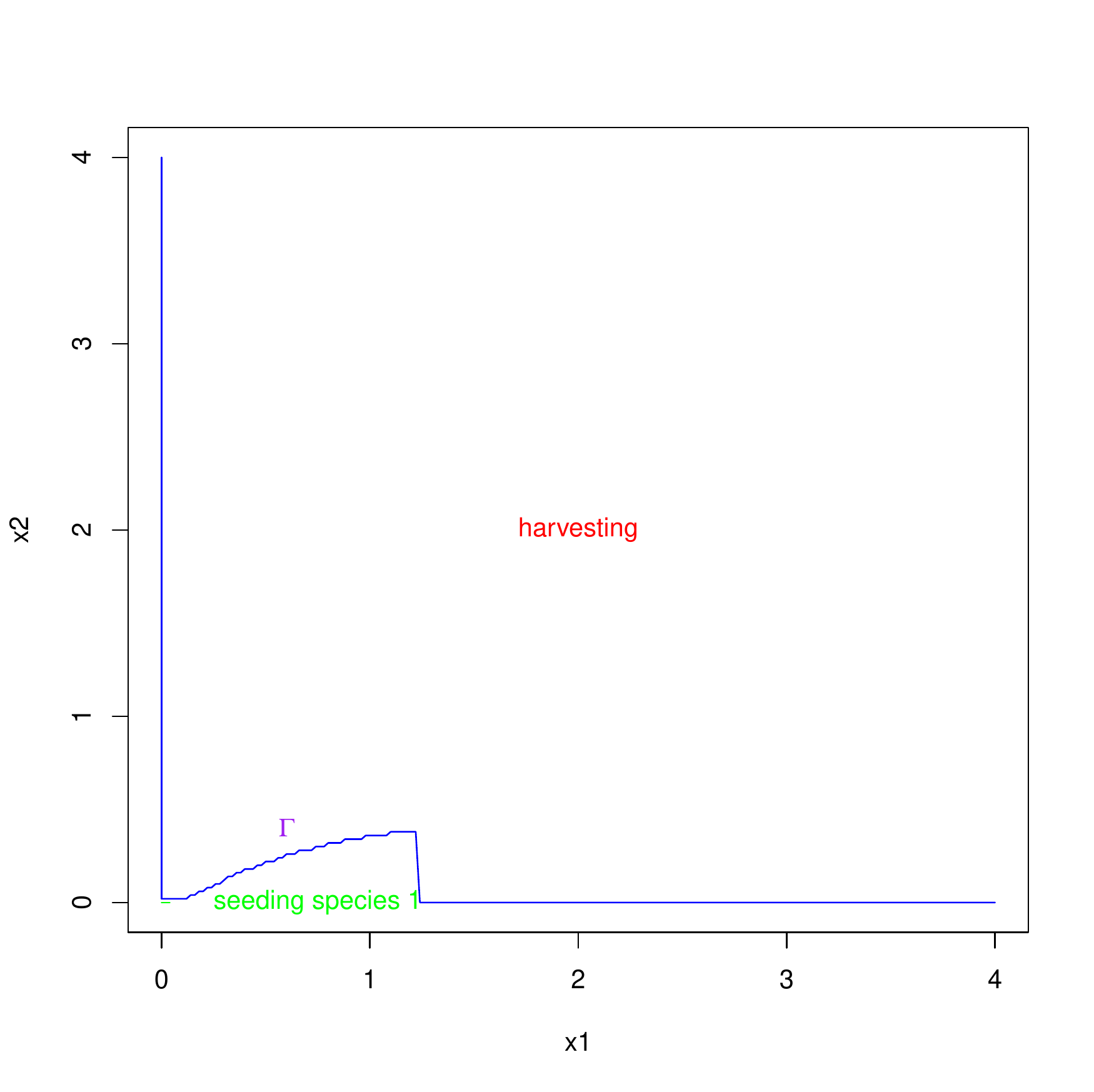} }}%
 	\caption{The value function and the optimal policy (1: harvesting of the prey, 2: harvesting of the predator,   0: seeding)}
 \label{fig9}
 \end{figure}

 \begin{figure}
	\centering
	\subfloat{{\includegraphics[width=6.5cm]{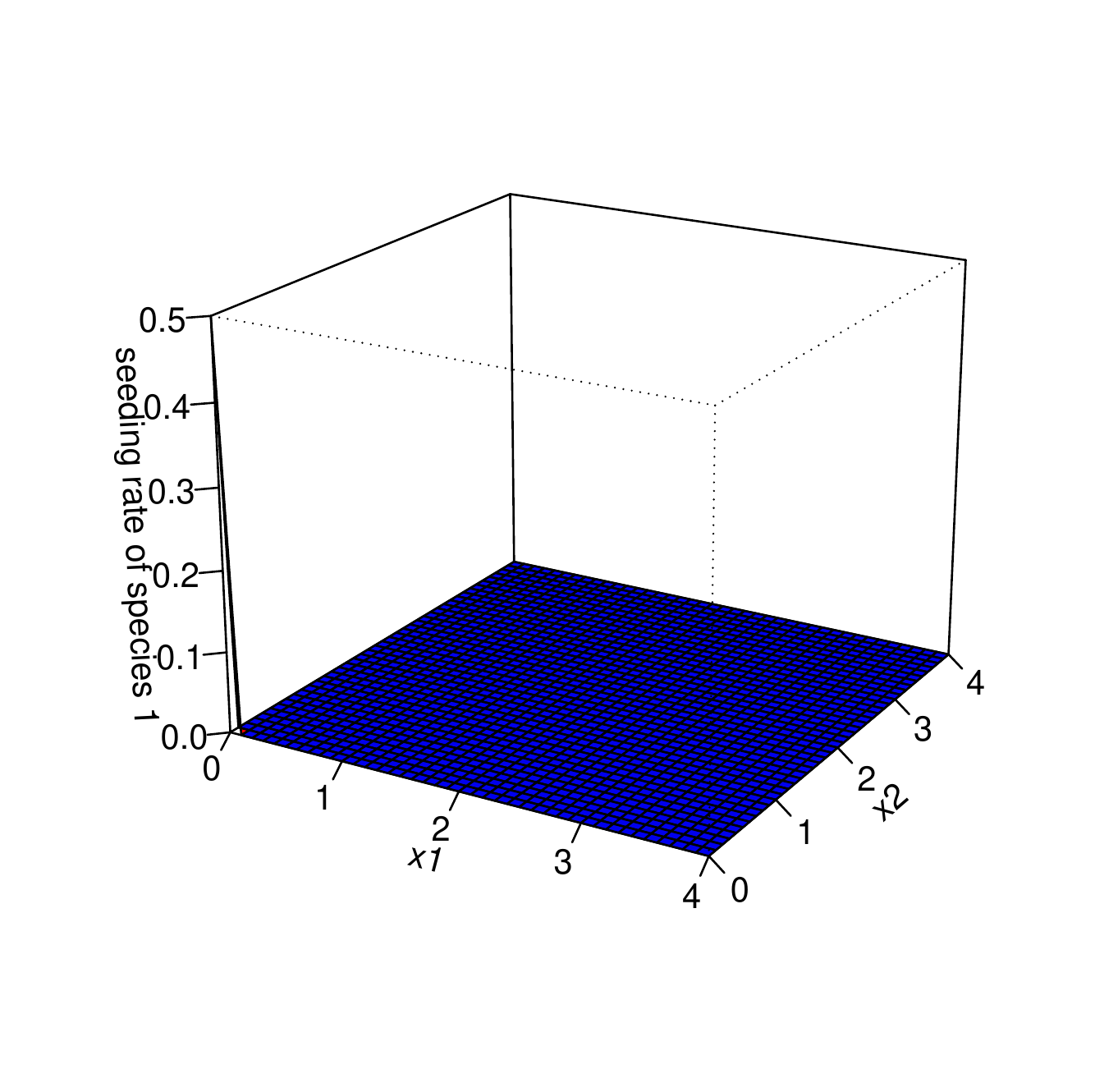} }}%
	\qquad
	\subfloat{{\includegraphics[width=6.5cm]{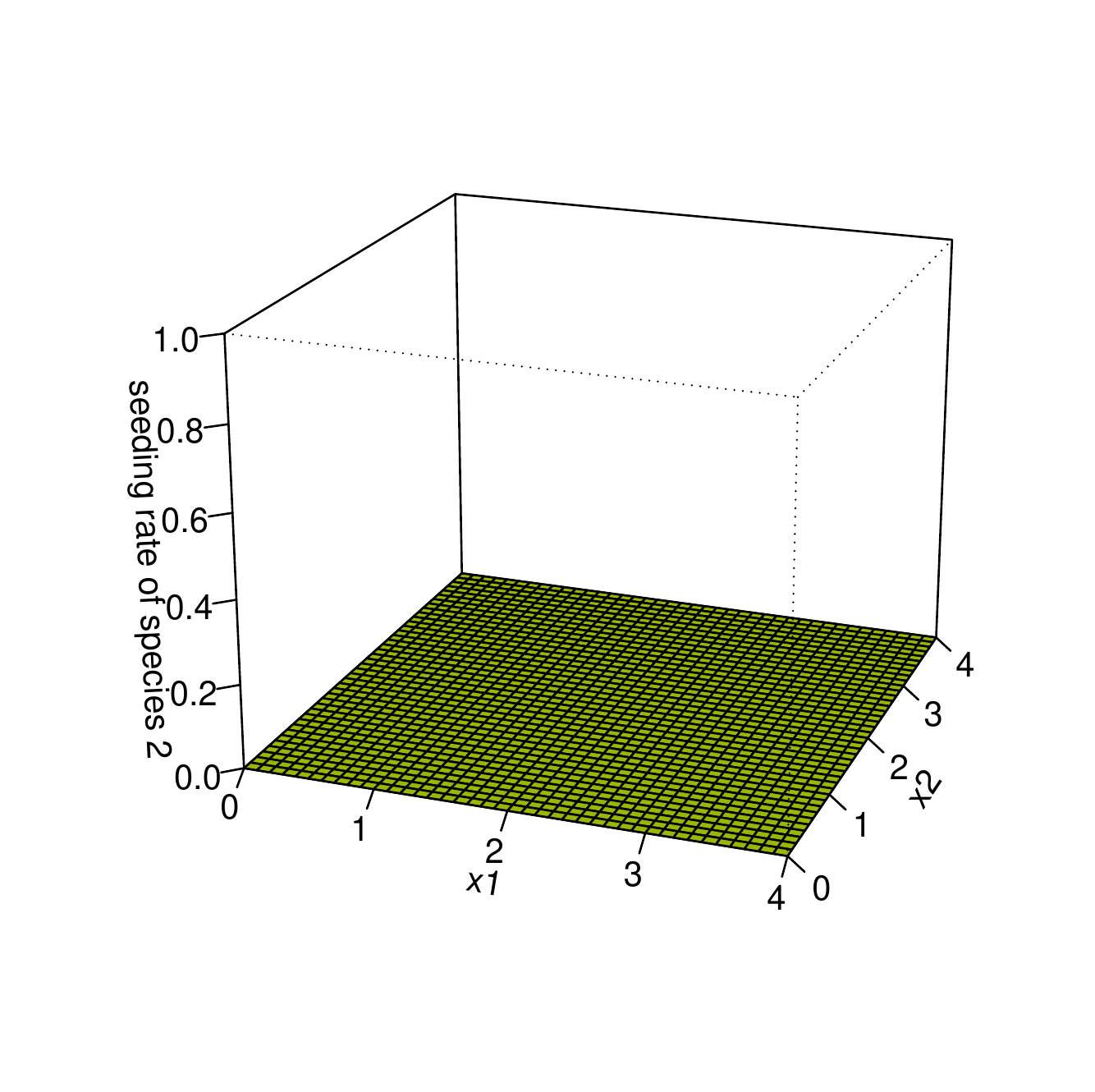} }}%
	\caption{The optimal seeding rates}
	\label{fig10}
\end{figure}

		For the second numerical experiment, we take $$\lambda_1=0, \quad \lambda_2=0.5, \quad \mu_1=0, \quad \mu_2=5,$$
so that only the predator can be seeded or harvested.
Figure \ref{fig11} provides the value function, the optimal seeding rate, and the optimal harvesting rate of
the predator.

\textit{\textbf{Biological interpretation}:
Just as in the first numerical experiment, it turns out that it is never optimal to seed the predator. Even if both species are extinct, and we are not allowed to seed the prey species, it is not optimal to seed the predator. Since the predator goes extinct without the prey, the optimal strategy is to harvest all of it immediately if there is no prey to sustain the dynamics. There is a level $L(x_1)$ which depends on the size of the prey population such that if the predator population is above $L(x_1)$ it is optimal to harvest it at the maximal rate. }

\begin{figure}[h!tb]
 	\centering		\subfloat{{\includegraphics[width=5.05cm]{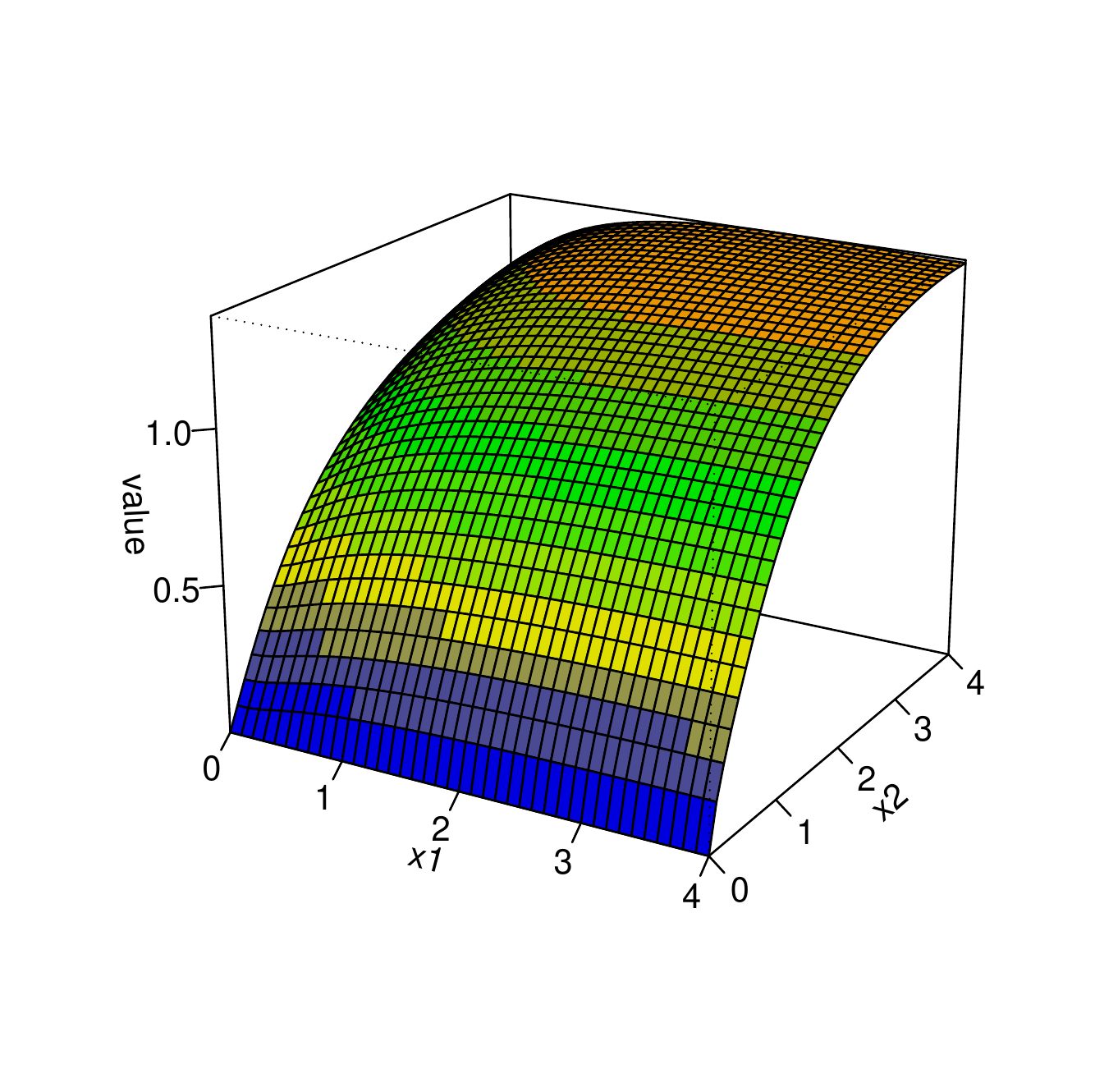} }}%
			\quad
\subfloat{{\includegraphics[width=5.05cm]{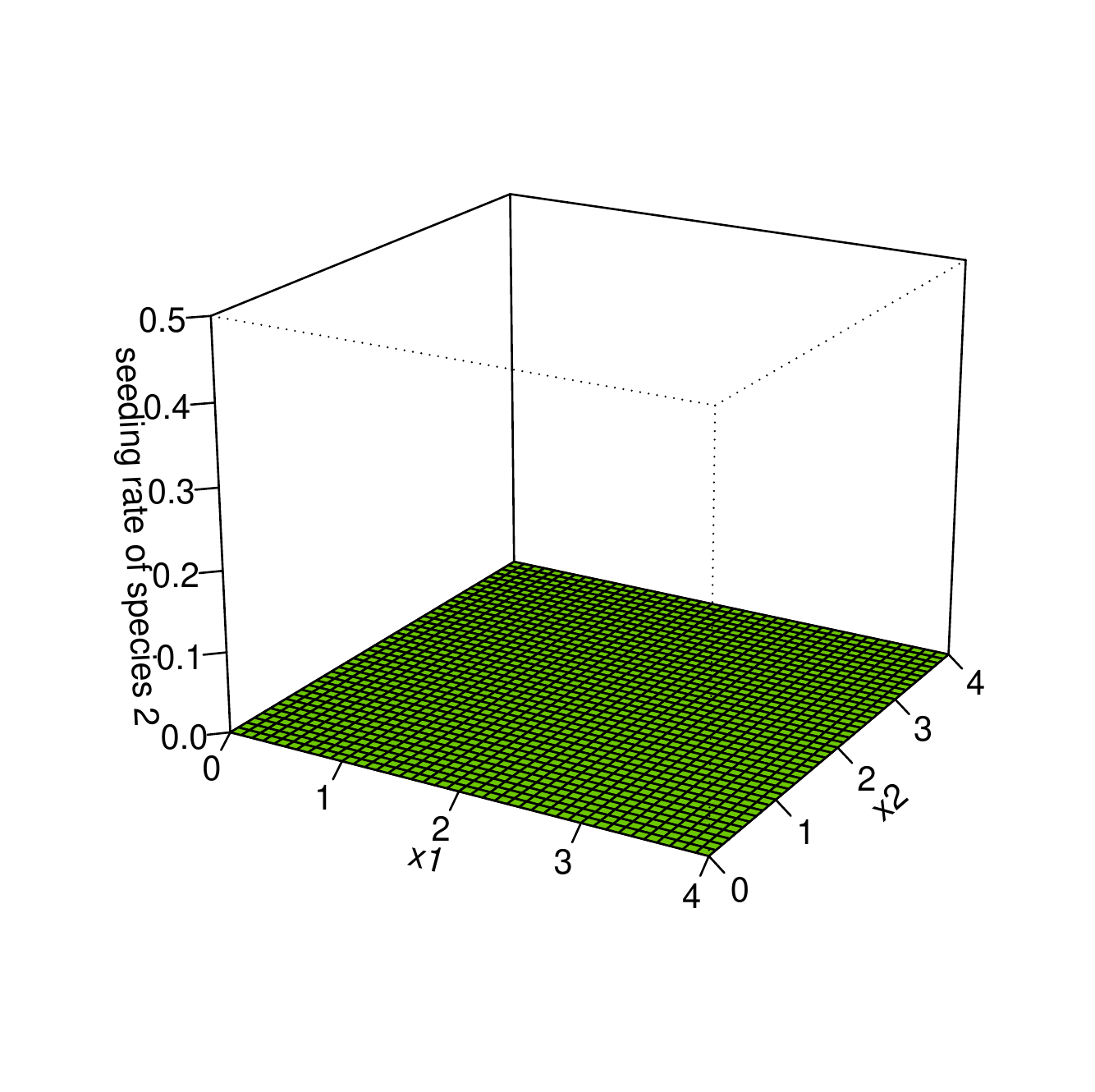} }}%
	\quad
\subfloat{{\includegraphics[width=5.05cm]{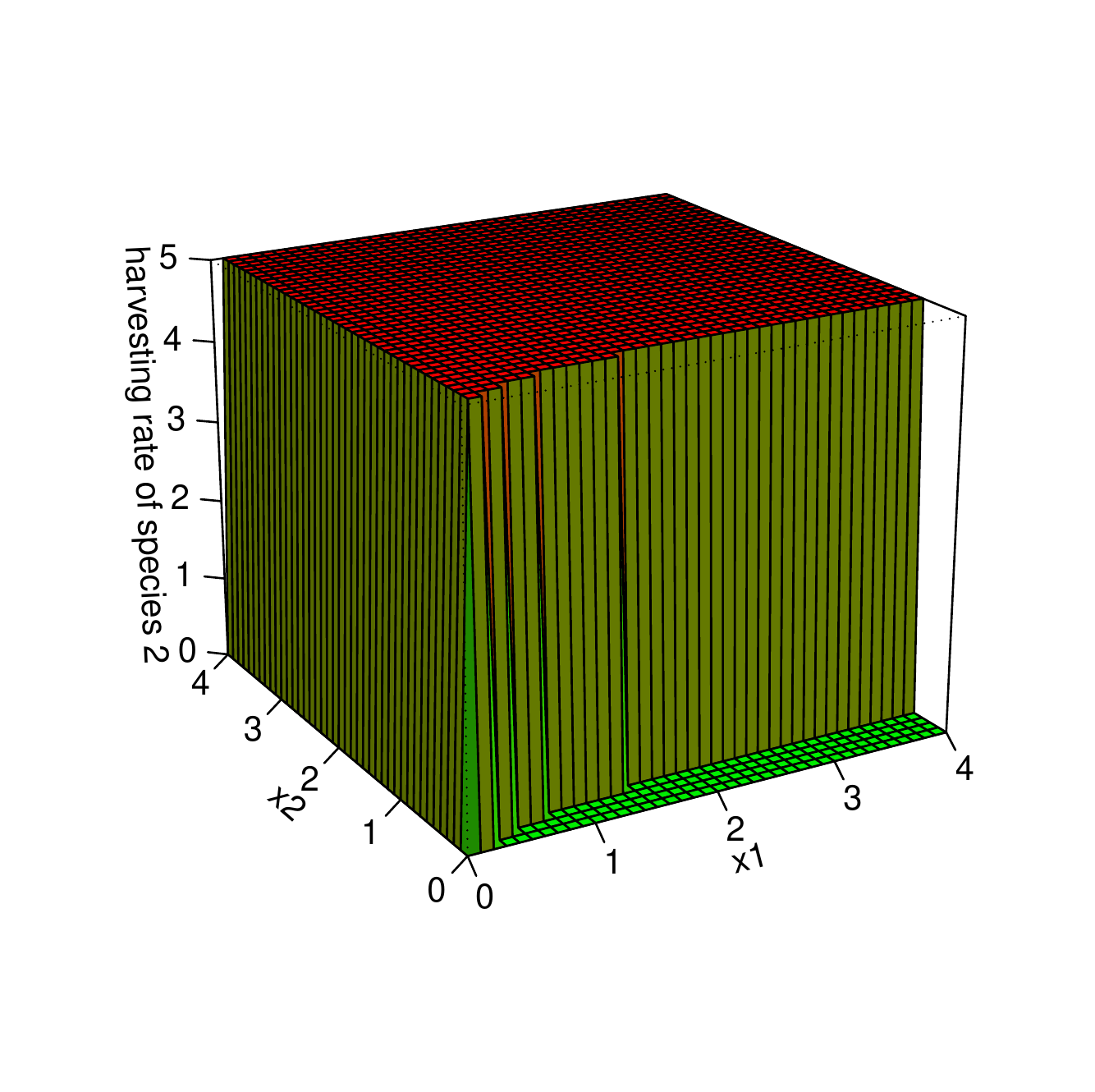} }}%
	\caption{The value function, optimal seeding, and harvesting rates of species 2}
	\label{fig11}
\end{figure}

	}
\end{exm}

\textbf{Acknowledgements:} Alexandru Hening has been supported by the NSF through the grant DMS-1853463.
\clearpage
\bibliographystyle{agsm}
\bibliography{harvest}

\appendix

\section{Properties of the value function}

\begin{prop}\label{prop2} Assume we are in the setting of bounded seeding and unbounded harvesting rates.
	Suppose that there exists number $U>0$ such that
	$$\sum\limits_{i=1}^d \big[ b_i(x) -\delta(x_i-U)\big]f_i<0 \quad \text{for}\quad |x|>U.$$ Then there exists $x^*\in [0, U]^d$ such that
	$$V(x)= V(x^*) + f\cdot (x-x^*)\quad \text{for}\quad x\in \overline{S} \setminus [0, U]^d.$$
Moreover,
	$$
	V(x)=V^U(x) \quad \text{for}\quad x\in [0, U]^d.
	$$
\end{prop}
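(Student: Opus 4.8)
The plan is to rewrite the payoff so that the singular harvesting control disappears, and then to compare $V$ against the explicit affine function $W(x)=f\cdot x+c$.

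\textbf{Step 1: a transformed representation of the payoff.} Since $\phi(x):=f\cdot x$ is linear, $\nabla\phi\equiv f$ and $\nabla^2\phi\equiv0$, so $(\L-\delta)\phi(x)=f\cdot b(x)-\delta f\cdot x=:h(x)$. Applying It\^o's formula to $e^{-\delta t}\phi(X(t))$ along the dynamics \eqref{e2.2.2}, taking expectations (the stochastic integral is a martingale on bounded horizons), and letting the horizon tend to infinity converts the singular term $f\cdot dY$ into a Lebesgue integral; using that the boundary term $\E_x[e^{-\delta T}f\cdot X(T)]$ is nonnegative and that $h\le\sup_{[0,U]^d}h<\infty$ on $[0,U]^d$, $h<0$ outside it, while $(f-g(x))\cdot c\le0$ by Assumption \ref{a:1}(b), a reverse Fatou argument gives, for every admissible $(Y,C)$,
$$J(x,Y,C)\ \le\ f\cdot x+\E_x\!\int_0^\infty e^{-\delta s}\Big[h(X(s))+\big(f-g(X(s))\big)\cdot C(s)\Big]\,ds .$$
Hypothesis \eqref{e2.5} says exactly that $h(x)<-\delta U\|f\|_1<0$ whenever $|x|>U$ (with $\|f\|_1=\sum_i f_i$), and since $\max_i x_i\le|x|$ one has $\bar S\setminus[0,U]^d\subset\{|x|>U\}$. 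Hence the running payoff above is strictly negative while $X$ sits outside the ball $\{|x|\le U\}$: it is never profitable to keep the population there.

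\textbf{Step 2: the affine upper bound.} Put $c:=\sup_{x\in[0,U]^d}\big(V(x)-f\cdot x\big)$, which is finite by Step 1 (indeed $V(x)\le f\cdot x+\delta^{-1}\sup_{[0,U]^d}h$) and satisfies $c\ge V(0)\ge0$, and let $W(x):=f\cdot x+c$. Because $\nabla W=f$ and $(\L-\delta)W=h-\delta c$, one checks on $\{|x|>U\}$ that $f-\nabla W=0$ and $(\L-\delta)W+\max_{\xi\in[0,\lambda]}\xi\cdot(\nabla W-g)(x)=h(x)-\delta c<0$ (the maximum is attained at $\xi=0$ since $f<g$), so $W$ is a classical supersolution of \eqref{e2.hjb} there. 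Hence, for any admissible $(Y,C)$ started at $x$ with $|x|>U$ and $\tau:=\inf\{t:|X(t)|\le U\}$, the process
$$e^{-\delta(t\wedge\tau)}W\big(X(t\wedge\tau)\big)+\int_0^{t\wedge\tau}e^{-\delta s}f\cdot dY(s)-\int_0^{t\wedge\tau}e^{-\delta s}g(X(s))\cdot C(s)\,ds$$
is a supermartingale: the harvesting increments drop out because $W$ is affine with $\nabla W=f$, and the absolutely continuous part has drift $e^{-\delta s}\big[h(X(s))-\delta c+(f-g(X(s)))\cdot C(s)\big]<0$ for $s<\tau$. Optional stopping plus the elementary sub-optimality form of the dynamic programming principle at $\tau$ (on $\{\tau<\infty\}$, $X(\tau)\in[0,U]^d$ and $W(X(\tau))\ge V(X(\tau))$ by the definition of $c$; on $\{\tau=\infty\}$ use $W\ge0$) give $J(x,Y,C)\le W(x)$, whence $V(x)\le f\cdot x+c$ for all $|x|>U$, in particular on $\bar S\setminus[0,U]^d$. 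Combined with $V-f\cdot\mathrm{id}\le c$ on $[0,U]^d$, this shows $\sup_{\bar S}\big(V(x)-f\cdot x\big)=c$; and since harvesting is always available, $V(x)-V(x')\ge f\cdot(x-x')$ for $x\ge x'$, so $V-f\cdot\mathrm{id}$ is nondecreasing in each coordinate and its supremum over $[0,U]^d$ is attained at $x^*=(U,\dots,U)$ (an $\varepsilon$-approximation covers the case where $V$ is only known to be measurable). Thus $V(x)\le V(x^*)+f\cdot(x-x^*)$ on $\bar S$.

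\textbf{Step 3: the matching lower bound and $V=V^U$.} For $x\in\bar S$, harvesting instantaneously every coordinate exceeding $U$ down to the level $U$ collects exactly $f\cdot(x-\Pi x)$, where $\Pi x\in[0,U]^d$ is that truncated point; following this with a near-optimal strategy from $\Pi x$ gives $V(x)\ge f\cdot(x-\Pi x)+V(\Pi x)\ge f\cdot(x-\Pi x)+V^U(\Pi x)$. By Step 1 nothing is gained by letting a coordinate linger above $U$, so this is an equality; together with the fact (same circle of ideas) that on the ``upper'' faces of $[0,U]^d$ the over-threshold species are already in the harvesting region, so $V-f\cdot\mathrm{id}$ has reached its maximal value $c=V(x^*)-f\cdot x^*$ there, one obtains $V(x)=V(x^*)+f\cdot(x-x^*)$ for $x\in\bar S\setminus[0,U]^d$. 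The assertion $V=V^U$ on $[0,U]^d$ is the same ``never leave the box'' principle the other way: any admissible strategy is matched by the one obtained by reflecting $X$ back at $\partial[0,U]^d$ — the reflection earns the full marginal price $f$ while erasing a strictly negative running contribution — so $\sup_{\mathcal A_x}J=\sup_{\mathcal A_x^U}J$. The main obstacle is exactly this lower bound in Step 3: pinning down the set on which $V$ coincides with its affine majorant $f\cdot x+c$ (equivalently, that the harvesting region of $V$ contains $\bar S\setminus[0,U]^d$ and connects to $x^*$). The clean ``harvest immediately'' argument uses only \eqref{e2.5} and an excursion/optional-stopping estimate — each sojourn of $X$ outside the box strictly lowers the transformed payoff of Step 1 — but turning it into a proof without the continuity of $V$ (unavailable here, as the Remark notes) requires either an approximation of $V$ by $\varepsilon$-optimal controls at $\tau$, or a trajectorywise comparison, which is subtle since the multidimensional drift $b$ need not be quasimonotone, so a smaller initial condition need not give a pointwise smaller trajectory. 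The transversality $\E_x[e^{-\delta T}f\cdot X(T)]\to0$ in Step 1 is a lesser technical point, handled by the reverse Fatou estimate indicated there.
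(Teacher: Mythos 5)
Your Steps 1--2 are a coarser but essentially sound version of the paper's own device (Dynkin/It\^o applied to an affine function with gradient $f$, using \eqref{e2.5} to make the drift term negative off the box and $f<g$ to kill the seeding term), and they do yield the global majorant $V(x)\le f\cdot x+c$ with $c=\sup_{[0,U]^d}(V-f\cdot\mathrm{id})$, which by the monotonicity $V(x)-V(x')\ge f\cdot(x-x')$ for $x\ge x'$ equals $V\big((U,\dots,U)\big)-f\cdot(U,\dots,U)$. The genuine gap is in Step 3, and it is not merely the technical issue you flag. You have fixed $x^*=(U,\dots,U)$, but the only lower bound that instantaneous harvesting provides is $V(x)\ge V(\Pi x)+f\cdot(x-\Pi x)$, where $\Pi x=(\min(x_1,U),\dots,\min(x_d,U))$ is the componentwise truncation of $x$, and in general $\Pi x\ne(U,\dots,U)$. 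Closing the sandwich $f\cdot x+\big(V(\Pi x)-f\cdot\Pi x\big)\le V(x)\le f\cdot x+c$ requires $V(\Pi x)-f\cdot\Pi x=c$ for every such $\Pi x$, i.e.\ that $V-f\cdot\mathrm{id}$ is constant on the whole upper boundary $\{y\in[0,U]^d:\max_i y_i=U\}$. You assert this (``the over-threshold species are already in the harvesting region''), but it is unjustified and false in general for $d\ge2$: for instance $V(U,U)-f\cdot(U,U)=V(U,0)-f\cdot(U,0)$ would mean $V(U,U)=V(U,0)+f_2U$, i.e.\ that from $(U,U)$ it is optimal to instantly harvest species~$2$ to extinction, which contradicts the existence of an interior no-action region. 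Only the coordinates that exceed $U$ are forced into the harvesting region by \eqref{e2.5}; the others need not be.

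The repair is exactly what the paper does: take $x^*$ to depend on $x$, namely $x^*=\Pi x$ (the proposition's ``there exists $x^*$'' must be read this way), and prove the \emph{sharp} upper bound $V(x)\le V(\Pi x)+f\cdot(x-\Pi x)$ directly rather than via a single global affine majorant. This is done by applying Dynkin's formula to the $x$-dependent function $\Phi_\e(y)=f\cdot(y-\Pi x)+\e$ up to the hitting time $\gamma_0$ of the orthant $[0,\Pi x]$, on which $(\L-\delta)\Phi_\e<0$ by \eqref{e2.5}, and then invoking $V(X(\gamma_0))\le V(\Pi x)$ by monotonicity of $V$. Your argument coincides with this when $d=1$ (there $\Pi x=U=(U)$ for $x>U$), which is presumably why the gap is invisible in the scalar case. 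The remaining assertion $V=V^U$ on $[0,U]^d$ is argued about as informally by you as by the paper (both reduce it to ``once the sharp identity holds, it is optimal to harvest down to $\Pi x$ immediately and never leave the box''), so I do not count that against you; but as written, Step 3 proves a statement that is false for $d\ge 2$, and the $x$-dependent comparison function is the missing idea.
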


\begin{proof}
	Fix some $x \in \lbar S\setminus [0, U]^d$ and $(Y, C)\in \mathcal{A}_{x}$, and let $X$ denote the corresponding harvested process. Let $x_i^*= \min\{x_i, U\}$ for $i=1, \dots, d$ and $x^*=(x_1^*, \dots, x_d^*)'$.
	
	Let $\e\in (0, 1)$ be a constant and
	define  \beq{5e:2}\Phi_\e(y)=f \cdot (y-x^*) +\e, \quad y\in \lbar S\setminus [0, U]^d.\eeq
	 We can extend $\Phi_\e\cd$ to the entire $\lbar S$ so that $\Phi_\e\cd$ is twice continuously differentiable, $\Phi_\e(y)\ge 0$ and $f\le \nabla \Phi_\e(y)$ for all $y\in \lbar  S$.
	 By assumption, we can check that $$(\mathcal{L}-\delta)\Phi_\e(y)=\sum\limits_{i=1}^d \big[ b_i(y) -\delta(y_i-x^*)\big]f_i-\delta \e< 0 \quad  \text{for} \quad y\in \lbar S\setminus [0, U]^d.$$
	  Choose $N$ sufficiently large so that $|x|< N$. For
	$$
	\beta_N=\inf\{t\ge 0:   |X(t)|\ge N\}, \quad	\gamma_0=\inf\{t\ge 0:    X(t)\in [0, x^*]\}, \quad T_N = N\wedge \beta_N\wedge \gamma_0,
	$$
we have
$T_N \to \gamma_0$ with probability one as $N \to\infty$.	
By Dynkin's formula, 	\bea
	\E_{x} \ad \big[e^{-\delta T_N}\Phi_\e\(X(T_N)\)\big]-\Phi_\e(x)\\
	\ad = \E_{x}\int_{0}^{T_N} e^{-\delta s}(\L-\delta)\Phi_\e\(X(s)\)ds-\E_{x}\int_{0}^{T_N} e^{-\delta s}\nabla \Phi_\e\(X(s)\)\cdot dY^c(s)\\
	\ad\quad +\E_{x}\int_{0}^{T_N} e^{-\delta s}\nabla \Phi_\e\(X(s)\)\cdot C(s)ds + \E_{x}\sum\limits_{0\le s\le T_N}e^{-\delta s}\Big[\Phi_\e\(X(s)\)-\Phi_\e\(X(s-)\)\Big],
	\eea
where $Y^c\cd$ is the continuous part of $Y\cd$. Let $\Delta Y(s)= Y(s)-Y(s-)$. 	Since $\nabla \Phi_\e(X(s))=f$ and $\Phi_\e\(X(s)\)-\Phi_\e\(X(s-)\)=-f\cdot \Delta Y(s)$,
	we obtain
	\beq{3e.3.3}\barray
	\E_{x} \ad \big[e^{-\delta T_N}\Phi_\e\(X(T_N)\)\big]-\Phi_\e(x) \le
\E_{x}\int_{0}^{T_N} e^{-\delta s}(\mathcal{L}-\delta)\Phi_\e(X(s))ds\\
	\ad \quad  -\E_{x}\int_{0}^{T_N} e^{-\delta s}f\cdot dY^c(s)  + \E_{x}\int_{0}^{T_N} e^{-\delta s}f\cdot C(s)ds - \E_{x}\sum\limits_{0\le s\le T_N}e^{-\delta s}f\cdot \Delta Y(s).
	\earray
	\eeq
	Since $\Phi_\e(y)\ge 0$ and $f<g(y)$ for any $y\in \lbar S$, it follows from \eqref{3e.3.3}  that
$$	
	\E_{x}\int_{0}^{T_N} e^{-\delta s}f\cdot dY(s) - \E_{x}\int_{0}^{T_N} e^{-\delta s}g(X(s))\cdot C(s)ds\le \Phi_\e(x)+\E_{x}\int_{0}^{T_N} e^{-\delta s}(\mathcal{L}-\delta)\Phi_\e(X(s)) ds.
	$$
	Letting $N\to\infty$, by the bounded convergence theorem, we obtain
	$$	
	\E_{x}\int_{0}^{\gamma_0} e^{-\delta s}f\cdot dY(s) - \E_{x}\int_{0}^{\gamma_0} e^{-\delta s}g(X(s))\cdot C(s)ds\le  \Phi_\e(x)  +\E_{x}\int_{0}^{\gamma_0} e^{-\delta s}(\mathcal{L}-\delta)\Phi_\e(X(s))ds.
	$$
	As a result
\bea
J(x, Y, C)\ad \le \E_{x}\Big[\int_{0}^{\gamma_0} e^{-\delta s}f\cdot dY(s) - \int_{0}^{\gamma_0} e^{-\delta s}g(X(s))\cdot C(s)ds +  V(X(\gamma_0)) \Big] \\
\ad \le  V(x^*) + \Phi_\e(x)+\E_{x}\int_{0}^{\gamma_0} e^{-\delta s}(\mathcal{L}-\delta)\Phi_\e(X(s))ds.
\eea
The above implies
\beq{4e:1}
J(x, Y, C)\le V(x^*) + f\cdot (x-x^*) + \e+\E_{x}\int_{0}^{\gamma_0} e^{-\delta s}(\mathcal{L}-\delta)\Phi_\e(X(s))ds.
\eeq
Letting $\e\to 0$ in \eqref{4e:1}
\beq{4e:2}
J(x, Y, C)\le V(x^*) + f\cdot (x-x^*)-\E_{x}\int_{0}^{\gamma_0} e^{-\delta s}(\mathcal{L}-\delta)\Phi_0(X(s))ds,
\eeq
where $\Phi_0\cd$ is also defined by \eqref{5e:2} at $\e=0$.
Note that if $\P(\gamma_0=0)<1$, then  \eqref{4e:1} is a strict inequality.
	On the other hand, it is obvious (by harvesting instantaneously $x-x^*$ at time $t=0$) that
	\beq{4e:3}
	V(x)\ge V(x^*) + f\cdot (x-x^*).\eeq
	In view of \eqref{4e:2} and \eqref{4e:3}, if  $x \in \lbar S \setminus [0, U]^d$, $V(x)=V(x^*) + f\cdot (x-x^*)$. Moreover,
	it is optimal to instantaneously harvest an amount of $x-x^*$ to drive the population to the state $x^*$ on the boundary of $[0, U]^d$,
	and then apply an optimal or near-optimal harvesting-seeding policy in $\mathcal{A}_{x^*}$. Therefore, if the initial population $x\in [0, U]^d$, it is optimal to apply a harvesting-seeding policy so that the population process stays in $[0, U]^d$ forever. This completes the proof.
\end{proof}

\begin{prop}
Suppose we are in the setting of bounded seeding and harvesting rates, and that Assumption \ref{a:1} is satisfied.

{\rm (a)} The value function
$V$ is finite and continuous on $\overline S$.

{\rm (b)} The value function
$V$ is a viscosity subsolution of \eqref{e3.3.17}; that is,
	for
	any $x^0\in S$ and any function $\phi\in C^2(S)$ satisfying
	$$(V-\phi)(x)\ge (V-\phi)(x^0)=0,$$
	for all $x$ in a neighborhood of $x^0$, we have
	\beq{e.3.18}(\L-\delta) \phi(x^0) + \max\limits_{\xi\in [-\lambda, \mu]}\Big[\xi^-\cdot \big(f-\nabla \phi)\(x^0\) - \xi^+ \cdot (g-\nabla \phi)\(x^0\)\Big]\le 0.\eeq

{\rm (c)} The value function
	$V$ is a viscosity supersolution of \eqref{e3.3.17}; that is,
	 for
	 any $x^0\in S$ and any function $\ph\in C^2(S)$ satisfying
	 \beq{e.3.27j}(V-\ph)(x)\le (V-\ph)(x^0)=0,\eeq for all $x$ in a neighborhood of $x^0$, we have
	 \beq{e.3.27k}(\L-\delta) \ph(x^0) + \max\limits_{\xi\in [-\lambda, \mu]}\Big[\xi^-\cdot \big(f-\nabla \ph)\(x^0\) - \xi^+ \cdot (g-\nabla \ph)\(x^0\)\Big]\ge 0.\eeq
	
	 \noindent {\rm(d)} The value function $V$ is a viscosity solution of \eqref{e3.3.17}.	
\end{prop}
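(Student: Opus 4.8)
The plan is to follow the classical route for identifying a value function as the viscosity solution of its Hamilton--Jacobi--Bellman equation: first establish that $V$ is bounded and continuous, then derive the dynamic programming principle, and finally obtain the sub- and supersolution inequalities by a localized Dynkin-formula argument combined with a contradiction. Part (d) is then simply the conjunction of (b) and (c).

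For (a), boundedness is structural. Every admissible strategy satisfies $J\ge 0$ (this is built into the admissibility conditions defining $\mathcal{A}_x$), so $V\ge 0$. On the other hand, since $f$ is nonincreasing we have $f(X(s))\le f(0)$, since $-\mu\le Q(s)\le\lambda$ we have $Q^-(s)\le\mu$, and the seeding term $Q^+(s)\cdot g(X(s))$ is nonnegative, whence
$$J(x,Q)\le \E_x\int_0^\infty e^{-\delta s}\,\mu\cdot f(0)\,ds=\frac{\mu\cdot f(0)}{\delta},$$
so $V$ is bounded on $\overline S$. For continuity, the key simplification in this regime is that the control enters \eqref{e3.2.2a} only additively through the drift and with a bounded rate. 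Hence, given initial points $x,x'$ and any fixed admissible $Q$, if we couple the solutions $X^x$ and $X^{x'}$ through the same Brownian motion, the $Q$-contribution cancels in the difference $X^x-X^{x'}$, and a Gronwall estimate on the exit-time localization $\tau_R=\inf\{s:|X^x(s)|\vee|X^{x'}(s)|\ge R\}$ gives $\E|X^x(s\wedge\tau_R)-X^{x'}(s\wedge\tau_R)|^2\le |x-x'|^2 e^{C_R s}$ with $C_R$ depending only on the local Lipschitz constants of $b$ and $\sigma$ on $\{|y|\le R\}$, in particular not on $Q$. Combining this with a uniform-in-$Q$ tail estimate $\sup_{Q}\P_x(\sup_{s\le T}|X^x(s)|>R)\to 0$ as $R\to\infty$ (which is where the global existence in Assumption \ref{a:1}(a), i.e. the underlying Lyapunov structure, enters: adding a uniformly bounded drift preserves non-explosion) and using that $f,g$ are bounded and uniformly continuous on compacts, dominated convergence yields $\sup_{Q}|J(x,Q)-J(x',Q)|\to 0$ as $x'\to x$; hence $|V(x)-V(x')|\le\sup_Q|J(x,Q)-J(x',Q)|\to 0$. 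This is the argument of \cite{Ky18} adapted to bounded rates.

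With $V$ bounded and continuous, the dynamic programming principle holds in the usual form: for every $x\in\overline S$ and every stopping time $\tau$,
$$V(x)=\sup_{Q\in\mathcal{A}_x}\E_x\Big[\int_0^\tau e^{-\delta s}\big(Q^-(s)\cdot f(X(s))-Q^+(s)\cdot g(X(s))\big)ds+e^{-\delta\tau}V(X(\tau))\Big],$$
proved as usual from the strong Markov property and concatenation of $\e$-optimal controls, the only care being a measurable-selection step that is routine once $V$ is continuous (cf. \cite{Ky18, Kushner92}). The viscosity inequalities then follow by contradiction. For the supersolution claim (c), suppose $V-\ph$ has a local maximum equal to $0$ at $x^0\in S$ but $(\mathcal{L}-\delta)\ph(x^0)+\max_{\xi\in[-\mu,\lambda]}[\xi^-\cdot(f-\nabla\ph)(x^0)-\xi^+\cdot(g-\nabla\ph)(x^0)]<0$. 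By continuity there is an open ball $B\ni x^0$ with $\overline B\subset S$ such that $V\le\ph$ on $\overline B$ and, for some $\eta>0$, $(\mathcal{L}-\delta)\ph(y)+\xi^-\cdot(f-\nabla\ph)(y)-\xi^+\cdot(g-\nabla\ph)(y)\le-\eta$ for all $y\in B$ and all $\xi\in[-\mu,\lambda]$. Let $\tau$ be the exit time of $X$ from $B$ capped at a small constant $\theta$. Applying Dynkin's formula to $s\mapsto e^{-\delta s}\ph(X(s))$ under an arbitrary control $Q$, using the identity $Q(s)\cdot\nabla\ph+Q^-(s)\cdot f-Q^+(s)\cdot g=Q^-(s)\cdot(f-\nabla\ph)-Q^+(s)\cdot(g-\nabla\ph)$, and bounding the integrand by $-\eta$ on $[0,\tau)$, the dynamic programming principle gives $V(x^0)\le\ph(x^0)-\eta\inf_Q\E_{x^0}\int_0^\tau e^{-\delta s}ds<\ph(x^0)=V(x^0)$, a contradiction — provided $\inf_Q\E_{x^0}[\tau]>0$, which holds because $b$ and $\sigma$ are bounded on $\overline B$, so a Markov bound on $\E_{x^0}[\sup_{s\le t}|X(s)-x^0|^2]\le C't$ (uniformly in $Q$) yields $\P_{x^0}(\tau\ge\theta)\ge 1/2$ for $\theta$ small. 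Part (b) is symmetric: if $V-\phi$ has a local minimum $0$ at $x^0$ and the subsolution inequality fails, pick $\xi^*\in[-\mu,\lambda]$ attaining the positive maximum, run the constant control $Q\equiv\xi^*$ up to the corresponding exit time $\tau$, and the suboptimality bound $V(x^0)\ge\E_{x^0}[\int_0^\tau e^{-\delta s}(\xi^{*-}\cdot f(X)-\xi^{*+}\cdot g(X))ds+e^{-\delta\tau}\phi(X(\tau))]$ combined with Dynkin's formula forces $V(x^0)\ge\phi(x^0)+\eta'\E_{x^0}\int_0^\tau e^{-\delta s}ds>\phi(x^0)$, again a contradiction.

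I expect the main obstacle to be part (a) — specifically, the continuity of $V$ on all of $\overline S$. Although the bounded-rate structure makes the control drop out of the coupling estimate, the merely local Lipschitz continuity of $b,\sigma$ together with the unbounded state space forces the exit-time localization, and the genuinely delicate point is to make the non-explosion/tail estimate $\sup_Q\P_x(\sup_{s\le T}|X^x(s)|>R)\to 0$ uniform over \emph{all} admissible controls $Q$; this is exactly where Assumption \ref{a:1}(a) and the Lyapunov function behind it are used, and where one should lean on the corresponding estimates in \cite{Ky18}. By contrast, the viscosity arguments in (b)--(c) are routine once the dynamic programming principle is in hand, the only quantitative ingredient being the uniform lower bound on $\E_{x^0}[\tau]$.
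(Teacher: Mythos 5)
Your proof is correct and, for parts (b)--(d), follows essentially the same route as the paper: the dynamic programming principle combined with Dynkin's formula applied to $e^{-\delta s}\phi(X(s))$, the identity $Q\cdot\nabla\phi+Q^-\cdot f-Q^+\cdot g=Q^-\cdot(f-\nabla\phi)-Q^+\cdot(g-\nabla\phi)$, a constant control for the subsolution inequality, and a contradiction over arbitrary controls for the supersolution inequality (the paper proves (b) directly for each fixed $\xi$ and then maximizes, rather than by contradiction with a maximizing $\xi^*$, but the computation is the same). The one substantive divergence is part (a): the paper dispatches continuity by citing Krylov (Chapter 3, Theorem 5), whereas you sketch a direct coupling/Gronwall argument in which the bounded control rate cancels in the difference of trajectories. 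Your version has the merit of exposing the genuinely delicate ingredient -- a control-uniform non-explosion/tail estimate -- that the citation conceals, but as written that uniform estimate is asserted rather than proved, so either it must be supplied or one should fall back on the reference as the paper does. Finally, in the supersolution step you are in fact more careful than the paper: the paper sets $\kappa_0=A\,\E\int_0^{\theta}e^{-\delta s}\,ds>0$ and then takes a supremum over $Q$ even though $\theta$ depends on $Q$, while your explicit bound $\inf_{Q}\E_{x^0}[\tau]>0$, obtained from the moment estimate on $\sup_{s\le t}|X(s)-x^0|$, is exactly what is needed to make that step airtight.
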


In the proof,
we use the following notation and definitions. For a point $x^0\in S$ and a strategy $Q\in \mathcal{A}_{x^0}$,
let ${X}$ be the corresponding process with harvesting and seeding. Let $B_\e(x^0)=\{x\in S: |x-x^0|<\e \}$, where $\e>0$ is sufficiently small so that
$\overline{B_\e(x^0)}\subset S$.
Let $\theta=\inf\{t\ge 0: {X}(t)\notin B_\e(x^0) \}$. For a constant $r>0$, we  define $\theta_r=\theta\wedge r$.

\noindent

\begin{proof}\

\noindent (a) Since the functions $f\cd$, $g\cd$ and the rates  $C\cd$, $R\cd$ are bounded, the value function is also bounded. The conclusion then
follows by \cite[Chapter 3, Theorem 5]{Krylov}.

\noindent (b)
For $x^0\in S$, consider
a $C^2$ function $\phi(\cdot)$ satisfying $\phi(x^0)=V(x^0)$ and
$\phi(x)\le V(x)$ for all $x$ in a neighborhood of $x^0$.
	Let  $\e>0$ be sufficiently small so that
 $\overline{B_\e(x^0)}\subset S$ and
 $\phi(x)\le V(x)$ for all $x\in \overline{B_\e(x_0)}$, where $\overline{B_\e(x_0)}=\{x\in S: |x-x^0|\le \e \}$ is the closure of $B_\e(x^0)$.

 Let $\xi\in [-\mu, \lambda]$ and define
$Q\in \mathcal{A}_{x^0}$ to satisfy $Q(t)=\xi$ for all $t\in [0, r]$ for a positive constant $r$. We denote by ${X}$ the corresponding harvested process with initial condition $x^0$.
Then ${X}(t)\in \overline{B_\e(x^0)}$ for all $0\le t\le  \theta$. By virtue of the dynamic programming principle, we have
	\beq{e.3.19}
	\barray
	\phi(x^0)= V(x^0)
	 \ge \E\bigg[ \int_0^{\theta_r} e^{-\delta s}\Big( Q^-(s)\cdot f\({X}(s)\)- Q^+(s)\cdot g\({X}(s)\)\Big)ds + e^{-\delta \theta_r}   \phi({X}(\theta_r))\bigg].
	\earray
	\eeq
	By the Dynkin formula, we obtain
	\beq{e.3.20}
	\barray
	\phi(x^0)\ad = \E e^{-\delta \theta_r} \phi ({X}(\theta_r)) - \E \int_0^{\theta_r} e^{-\delta s} (\L -\delta ) \phi ({X}(s))ds\\
	\ad \qquad + \E\int_0^{\theta_r} e^{-\delta s}\Big(Q^-(s)\cdot \nabla\phi\({X}(s)\) - Q^+(s)\cdot \nabla\phi\({X}(s)\)\Big)ds.
	\earray
	\eeq
	A combination of \eqref{e.3.19} and \eqref{e.3.20} leads to
	\beq{e.3.21}
	\barray
	0 \ad \ge \E \int_0^{\theta_r} e^{-\delta s}\Big(Q^-(s)\cdot f\({X}(s)\) - Q^+(s)\cdot g\({X}(s)\)\Big)ds 	+ \E \int_0^{\theta_r} e^{-\delta s} (\L -\delta ) \phi ({X}(s))ds\\
	\ad \qquad - \E\int_0^{\theta_r} e^{-\delta s}\Big(Q^-(s)\cdot \nabla\phi\({X}(s)\) - Q^+(s)\cdot\nabla\phi\({X}(s)\)\Big)ds,
	\earray
	\eeq
	which in turn implies
	$$\E\int_0^{\theta_r}e^{-\delta s}\Big[ (\L-\delta) \phi(X(s)) + Q^-(s)\cdot\big(f-\nabla \phi)\(X(s)\) - Q^+(s)\cdot (g-\nabla \phi)\(X(s)\)\Big]ds\le 0.$$
	By the continuity of $X\cd$ and the definition of $Q\cd$, we obtain
	$$(\L-\delta) \phi(x^0) + \xi^-\cdot \big(f-\nabla \phi)\(x^0\) - \xi^+ \cdot (g-\nabla \phi)\(x^0\)\le 0.$$
	This completes the proof of (b).
	
\noindent (c)	Let $x^0\in S$ and suppose $\varphi(\cdot)\in C^2(S)$ satisfies \eqref{e.3.27j}
for all $x$ in a neighborhood of $x^0$.
We argue by contradiction. Suppose that \eqref{e.3.27k} does not hold. Then there exists a constant $A>0$ such that
	\beq{e.3.27m}(\L-\delta) \ph(x^0) + \max\limits_{\xi\in [-\lambda, \mu]}\Big[\xi^-\cdot \big(f-\nabla \ph)\(x^0\) - \xi^+ \cdot (g-\nabla \ph)\(x^0\)\Big]\le -2A< 0.\eeq
Let $\e>0$ be small enough so that $\overline{B_\e(x^0)}\subset S$ and for any $x\in \overline{B_\e (x^0  )}$, $\varphi(x)\ge V(x)$ and
	\beq{e.3.27n}
	(\L-\delta) \ph(x) + \max\limits_{\xi\in [-\lambda, \mu]}\Big[\xi^-\cdot \big(f-\nabla \ph)\(x\) - \xi^- \cdot (g-\nabla \ph)\(x\)\Big]\le -A <0.
	\eeq
		Let
	 $Q\in \mathcal{A}_{x^0}$ and ${X}\cd$ be the corresponding process. Recall that $\theta=\inf\{t\ge 0: {X}(t)\notin B_\e(x^0) \}$ and $\theta_r=\theta\wedge r$ for any $r>0$.
 It follows from the Dynkin formula that
	\beq{e.3.27p}\barray \ad \E e^{-\delta \theta_r} \varphi({X}(\theta_r)-\varphi(x^0)) \\
	\ad \quad = \E \int_0^{\theta_r} e^{-\delta s} \Big[(\L -\delta ) \varphi ({X}(s))
	-  Q^-(s)\cdot \nabla\varphi (X(s)) + Q^+(s)\cdot \nabla \varphi (X(s))\Big]ds\\
	\ad \quad =
\int_0^{\theta_r} e^{-\delta s} \Big[(\L -\delta ) \varphi ({X}(s))
	+Q^-(s) \cdot (f- \nabla \varphi )(X(s))  - Q^+(s)\cdot (g-\nabla \varphi) (X(s))\Big]ds\\
	\ad\quad - \int_0^{\theta_r} e^{-\delta s} \Big[  Q^-(s)\cdot f(X(s)) - Q^+(s)\cdot g(X(s))\Big]ds
	.\earray
	\eeq
		Equations \eqref{e.3.27n} and \eqref{e.3.27p} show that
	\beq{}\barray \ad \E e^{-\delta \theta_r} \varphi({X}(\theta_r))-\varphi(x^0)) \\
	\ad \quad \le  \E \int_0^{\theta_r } e^{-\delta s} (-A)ds - \int_0^{\theta_r} e^{-\delta s} \Big( Q^-(s)\cdot f(X(s)) - Q^+(s)\cdot g(X(s))\Big)ds
.
	\earray
	\eeq
	Therefore
	\beq{e.3.28}
	\barray
	\varphi(x^0) \ad \ge \E e^{-\delta \theta_r} \varphi ({X}(\theta_r))  +  A \E \int_0^{\theta_r} e^{-\delta s} ds\\
	\ad \qquad \qquad  + \int_0^{\theta_r} e^{-\delta s} \Big( Q^-(s)\cdot f(X(s)) - Q^+(s)\cdot g(X(s))\Big)ds.
	\earray
	\eeq	
Letting $r\to \infty$, we have
	\beq{e.3.31b}
	\barray
	V(x^0)=\varphi(x^0) \ad \ge \E e^{-\delta \theta} \varphi ({X}(\theta))  +  A \E \int_0^{\theta} e^{-\delta s} ds\\
	\ad \qquad \qquad  + \int_0^{\theta} e^{-\delta s} \Big( Q^-(s)\cdot f(X(s)) - Q^+(s)\cdot g(X(s))\Big)ds.
	\earray
	\eeq	
Set $\kappa_0 = A \E \int_0^{\theta} e^{-\delta s} ds>0$. Taking the supremum over $Q\in \mathcal{A}_{x^0}$ we arrive at
	\beq{e.3.31d}
	\barray
	V(x^0) \ad \ge \kappa_0 +  \sup\limits_{Q\in \mathcal{A}_{x^0}}\E\bigg[e^{-\delta \theta} \varphi ({X}(\theta))  + \int_0^{\theta} e^{-\delta s} \Big( Q^-(s)\cdot f(X(s)) - Q^+(s)\cdot g(X(s))\Big)ds\bigg].
	\earray
	\eeq	
In view of the dynamic programming principle, the preceding inequality can be rewritten as $V(x^0)\ge V(x_0)+\kappa_0>V(x^0)$, which is a contradiction. This implies that \eqref{e.3.27k} has to hold and the conclusion follows.

\noindent Part (d) follows from (b) and (c).
	\end{proof}

\section{Numerical Algorithm}
\label{sec:alg}

We will present the detailed convergence analysis of Theorem \ref{thm:conv1}, which is closely based on the Markov chain approximation method developed by \cite{Kushner92, Kushner91}.
 Theorem \ref{thm:conv2} and Theorem \ref{thm:conv3} can be derived using similar techniques and we therefore omit the details.

\subsection{Transition Probabilities for bounded seeding and unbounded harvesting rates}
\

For simplicity, we make use of one more assumption below. This assumption will be used to ensure that the transition probabilities $p^h(x, y|u)$ are well defined. Nevertheless, this is not an essential assumption. There are several alternatives to handle the cases when Assumption \ref{a:2} fails. We refer the reader to \cite[page 1013]{Kushner90} for a detailed discussion. Define for any $x\in \lbar S$ the covariance matrix $a(x)= \sg(x)\sg'(x)$.
\begin{asm}\label{a:2}
	For any $i=1, \dots, d$ and $x\in \lbar S$, $$a_{ii}(x)-\sum\limits_{j: j\ne i}\big|a_{ij}(x)\big|\ge 0.$$
\end{asm}

We define the difference
$\Delta X_n^h = X_{n+1}^h-X_{n}^h.$
Denote by $\Delta Y^h_n$  the harvesting amount for the chain at step $n$.
If $\pi^h_n=i$, we let $\Delta Y^h_n=h\ei$ and then $\Delta X^h_n=-h\ei$.
If $\pi^h_n=0$, we set $\Delta Y^h_n=0$. Define
$$Y^h_0=0, \quad Y^h_n = \sum\limits_{m=0}^{n-1}\Delta Y^h_m.$$
For definiteness, if $X^{h}_{n, i}$ is the $i$th component of the vector $X^h_n$ and $\{j: X^{h}_{n, j}=U\}$ is non-empty, then step $n$ is a harvesting step on species $\min\{j: X_{n, j}^{h}=U\}$. Recall that $u^h_n= (\pi^h_n, C^h_n)$ for $n\in\mathbb{Z}_{\geq 0}$ and $u^h=\{u^h_n\}_n\equiv \{Y^h_n, C^h_n\}_n$ is a sequence of controls. It should be noted that $\pi^h_n = 0$
 includes the case when we seed nothing; that is, $C^h_n = 0$.  Denote by
$\mathcal{F}^h_n=\sigma\{X^h_m,u^h_m, m\le n\}$ the $\sigma$-algebra containing the information from the processes $X^h_m$ and $u^h_m$ between the times $0$ and $n$.

The sequence $u^h= (\pi^h, C^h)\equiv \{Y^h_n, C^h_n\}_n$
is said to be admissible if it satisfies the following conditions:
\begin{itemize}
	\item[{\rm (a)}]
	$u^h_n$ is
	$\sigma\{X^h_0, \dots, X^h_{n},u^h_0, \dots, u^h_{n-1}\}-\text{adapted},$
	\item[{\rm (b)}]  For any $x\in S_h$, we have
	$$\P\{ X^h_{n+1} = x | \mathcal{F}^h_n\}= \P\{ X^h_{n+1} = x | X^h_n, u^h_n\} = p^h( X^h_n, x| u^h_n),$$
	\item[{\rm (c)}] Denote by $X^{h}_{n, i}$ the $i$th component of the vector $X^h_n$. Then
	$$ \P\big(
	 \pi^h_{n}=\min\{j: X^{h}_{n, j} = U\}  | X^{h}_{n, j} = U \text{ for some } j\in \{1, \dots, d \}, \mathcal{F}^h_n\big)=1.
	$$
	\item[{\rm (d)}] $X^h_n\in S_h$ for all $n\in\mathbb{Z}_{\geq 0}$.
\end{itemize}
The class of all admissible control sequences $u^h$ having the initial state $x$ will be denoted by
$\mathcal{A}^h_{x}$.

For each
$(x, u)\in S_h\times \mathcal{U}$,
we define
a family of interpolation intervals $\Delta t^h (x, u)$. The values of $\Delta t^h (x, u)$ will be specified later. Then we define
\beq{typo} t^h_0 = 0,\quad  \Delta t^h_m = \Delta t^h(X^h_m, u^h_m),
\quad  t^h_n = \sum\limits_{m=0}^{n-1} \Delta t^h_m.\eeq

Let $\E^{h, u}_{x, n}$, $\Cov^{h, u}_{x, n}$ denote the conditional expectation and covariance given by
$$\{X_m^h, u_m^h, m\le n, X_n^h=x, u^h_n=u \},$$
respectively. Our objective is to define transition probabilities $p^h (x, y | u)$ so that the controlled Markov chain $\{X^h_n\}$ is locally consistent with respect to the controlled diffusion \eqref{e2.2.2}
in the sense that the following conditions hold at seeding steps, i.e., for $u=(0, c)$
\beq{e.4.2}
\barray
\aad \E^{h, u}_{x, n}\Delta X_n^h = \big({b}(x)+c\big)\Delta t^h(x, u)  + o(\Delta t^h(x, u)),\\
\aad Cov^{h, u}_{x, n}\Delta X_n^h = a(x)\Delta t^h(x, u) + o(\Delta t^h(x, u)),\\
\aad \sup\limits_{n, \ \omega} |\Delta X_n^h| \to 0 \quad \text{as}\quad h \to 0.
\earray
\eeq
Using the procedure used by \cite{Kushner90}, for $(x, u)\in S_h\times \mathcal{U}$ with $u=(0, c)$, define
\beq{e.4.7}
\barray
\aad Q_h (x, u)=\sum\limits_{i=1}^d a_{ii}(x) -\sum\limits_{i, j: i\ne j}\dfrac{1}{2}|a_{ij}(x)| +h\sum\limits_{i=1}^d |b_i(x) + c_i| +h,\\
\aad p^h \(x, x+h\ei |u\) =
\dfrac{a_{ii}(x)/2-\sum\limits_{j: j\ne i}|a_{ij}(x )|/2+\big(b_{i}(x)+c_i\big)^+ h }{Q_h (x, u)}, \\
\aad p^h \(x, x-h \ei | u\) =
\dfrac{a_{ii}(x)/2-\sum\limits_{j: j\ne i}|a_{ij}(x )|/2+\big(b_i(x) +c_i)^- h}{Q_h (x, u)}, \\
\aad p^h \( x, x+h\ei +h \ej | u\) =  p^h \( x, x-h\ei -h\ej| u\) =
\dfrac{a_{{ij}}^+(x)}{2Q_h (x, u)}, \\
\aad p^h \( x, x+h\ei -h \ej | u\) = p^h \( x, x-h\ei + h \ej |  u \)=
\dfrac{a_{{ij}}^-(x)}{2Q_h (x, u)}, \\
\aad   p^h \( x, x | u\) =\dfrac{h  }{ Q_h (x, u)},\qquad  \Delta t^h (x, u)=\dfrac{h^2}{Q_h(x, u)}.
\earray
\eeq
Set $p^h \(x, y|u=(0, c)\)=0$ for all unlisted values of $y\in S^h$.
Assumption \ref{a:2} guarantees that
the transition probabilities in \eqref{e.4.7} are well-defined. At the harvesting steps, we define
\beq{e.4.8}
\barray
\aad p^h\( x, x - h \ei| u = (i, c)\)=1, \quad \Delta t^h(x, u=(i, c))=0, \quad   i=1, 2, \dots, d.
\earray
\eeq
Thus, $p^h \(x, y|u=(i, c)\)=0$ for all unlisted values of $y\in S^h$.
Using the above transition probabilities,
we can check that the locally consistent conditions of $\{X^h_n\}$ in \eqref{e.4.2} are satisfied.

\subsection{Continuous--time interpolation and time rescaling}
\

The convergence
 result is based on a continuous-time interpolation of the chain,
which will be  constructed to be piecewise constant on the time interval $[t^h_n, t^h_{n+1}), n\ge 0$.
We define
$n^h(t)=\max\{n: t^h_n\le t\}, t\ge 0$.
We first define discrete time processes associated with the controlled Markov chain as follows. Let
$B^h_0=M^h_0=0$ and define for $n\ge 1$,
\beq{e.4.11}
 B^h_n = \sum\limits_{m=0}^{n-1} I_{\{\pi_m^h=0\}}
\E^{h}_m \Delta\xh_m,\qquad  M^h_n  = \sum\limits_{m=0}^{n-1} (\Delta \xh_m -
\E^{h}_m \Delta X_m)I_{\{\pi_m^h=0\}}.
\eeq
The piecewise constant interpolation processes, denoted by $(X^h\cd, Y^h\cd, B^h\cd, M^h\cd, C^h\cd)$ are naturally defined as
\beq{e.4.12}
\barray
\aad X^h(t) = X^h_{n^h(t)},\quad C^h(t) = C^h_{n^h(t)}, \\ \aad  Y^h(t) = Y^h_{n^h(t)}, \quad B^h(t) = B^h_{n^h(t)}, \quad M^h(t) = M^h_{n^h(t)}, \quad t\ge 0.
\earray
\eeq
Define $\mathcal{F}^h(t)=\sigma\{X^h(s),  Y^h(s), C^h(s): s\le t\}$.
At each step $n$, we can write
\beq{e2.4.1}
\Delta X_n^h =  \Delta X_n^h I_{\{\text{harvesting step at }n\}}+ \Delta X_n^h I_{\{\text{seeding step at }n\}}.
\eeq
Thus, we obtain
\beq{e.4.13}
\barray
X^h_n = x +  \sum\limits_{m=0}^{n-1} \Delta X_m^h I_{\{ \pi^h_m\ge 1\}}+ \sum\limits_{m=0}^{n-1} \Delta X_m^h I_{\{ \pi^h_m=0\}}.
\earray
\eeq
This implies
\beq{e.4.14}
X^h(t)
= x + B^h(t) + M^h(t) -Y^h(t).
\eeq
Recall that $\Delta t^h_m = h^2/Q_h(X^h_m, u^h_m)$ if $\pi^h_m=0$ and $\Delta t^h_m = 0$ if $\pi^h_m\ge 1$. It follows that
\beq{e.4.15}
\barray
B^h(t) \ad = \sum\limits_{m=0}^{n^h(t)-1} \Big[
b (X^h_m) +C^h_m \Big]\Delta t^h_m\\
\ad=\int_0^t \Big[ b (X^h(s)) + C^h(s) \Big] ds-\int_{t^h_{n^h(t)}}^t \Big[ b (X^h(s)) +C^h(s)\Big] ds\\
\ad = \int_0^t \Big[ b (X^h(s))  +C^h(s) \Big]ds + \e^h_1(t),
\earray
\eeq
with $\{\e_1^h\cd\}$
  being an $\mathcal{F}^h(t)$-adapted process satisfying \begin{equation*}\lim\limits_{h\to 0} \sup\limits_{t\in [0, T_0]}\E|\e_1^h(t)|=0 \quad \text{for any }0<T_0<\infty.\end{equation*} We now attempt to represent $M^h\cd$ in a form similar to the diffusion term in \eqref{e2.2.2}.
Factor
$$a(x)= \sg(x)\sg'(x)=P(x)D^2(x)P'(x),$$
where $P\cd$ is an orthogonal matrix, $D\cd=\diag\{r_1\cd, ..., r_d
\cd\}$.
Without loss of generality, we suppose that
$\inf\limits_{x}r_i(x)>0$ for all $i=1, \dots, d$. Define $D_0\cd=\diag\{1/r_1\cd, ..., 1/r_d
\cd\}$.
\begin{rem} In the argument above, for simplicity, we assume that the diffusion matrix $a(x)$ is nondegenerate. If this is not the case, we can use the trick from \cite[p.288-289]{Kushner92} to establish equation  \eqref{e.4.17}. 	
\end{rem}
Define $W^h\cd$ by
\beq{e.4.16}
\barray
W^h(t)  \ad = \int_0^t D_0 (X^h(s))
P' (X^h(s))dM^h(s)\\
\ad = \sum\limits_{m=0}^{n^h(t)-1} D_0 (X^h_m)
P' (X^h_m)(\Delta \xh_m -\E^{h}_m \Delta\xh_m)I_{\{ \pi^h_m=0\}}.
\earray
\eeq
Then we can write
\beq{e.4.17}
M^h(t) =\int_0^t \sg (X^h(s)) dW^h(s) + \e_2^h(t),
\eeq
with $\{\e_2^h\cd\}$
 being an $\mathcal{F}^h(t)$-adapted process satisfying \begin{equation*}\lim\limits_{h\to 0} \sup\limits_{t\in [0, T_0]}\E|\e_2^h(t)|=0 \quad \text{for any }0<T_0<\infty.\end{equation*}
Using  \eqref{e.4.15} and \eqref{e.4.17}, we can write \eqref{e.4.14} as
\beq{e.4.18}
X^h(t) = x + \int_0^t \Big[ b (X^h(s)) +C^h(s) ]ds +  \int_0^t \sg(X^h(s)) dW^h(s) -Y^h(t)+\e^h(t),
\eeq
where $\e^h\cd$ is an $\mathcal{F}^h(t)$-adapted process satisfying \begin{equation*}\lim\limits_{h\to 0} \sup\limits_{t\in [0, T_0]}\E|\e^h(t)|=0 \quad \text{for any }0<T_0<\infty.\end{equation*}
The objective
function from \eqref{e2.4.4} can be rewritten as
\beq{e.4.19}
J^h(x, Y^h, C^h)= \E\Big[ \int_0^{\infty} e^{-\delta s} f \cdot dY^{h}(s)- \int_0^{\infty} e^{-\delta s} g(X^h(s))\cdot C^h(s) d(s)\Big].
\eeq

\noindent{\bf Time rescaling.} Next we will introduce ``stretched-out'' time scale. This is similar to the approach previously used by \cite{Kushner91} and \cite{Budhiraja07} for singular control problems. Using
the new time scale, we can overcome the possible non-tightness of the family of processes $\{Y^h\cd\}$.

Define the rescaled
time increments $\{\Delta \wdh{t}_n^h: n\in\mathbb{Z}_{\geq 0}\}$ by
\beq{e.4.20}
\barray
\aad \Delta \wdh{t}^h_n = \Delta t^h_n I_{\{ \pi^h_n =0 \}} + h I_{\{ \pi^h_n \ge 1 \}}, \qquad  \wdh{t}_0=0, \qquad\wdh{t}_n = \sum\limits_{k=0}^{n-1}\Delta \wdh{t}^h_k, \quad n\ge 1.\\
\earray
\eeq

\begin{defn}\label{def:1}{\rm
		The rescaled time process
		$\wdh{T}^h\cd$ is the unique continuous nondecreasing process satisfying the following:
		\begin{itemize}
			\item[(a)] $\wdh{T}^h(0)=0$;
			\item[(b)] the derivative of $\wdh{T}^h\cd$ is 1 on $(\wdh{t}^h_n, \wdh{t}^h_{n+1})$ if $\pi^h_n=0$, i.e., $n$ is a seeding step;
			\item[(c)] the derivative of $\wdh{T}^h\cd$ is 0 on $(\wdh{t}^h_n, \wdh{t}^h_{n+1})$ if $\pi^h_n\ge 1$, i.e., $n$ is a harvesting step.	
		\end{itemize}	
	}
\end{defn}
Define the rescaled and interpolated process $\wdh{X}^h(t)= X^h(\wdh{T}^h(t))$ and likewise define
 $\wdh{Y}^h\cd$, $\wdh{C}^h\cd$, $\wdh{B}^h\cd$, $\wdh{M}^h\cd$, and the filtration $\wdh{\mathcal{F}}^h\cd$ similarly.
It follows from \eqref{e.4.14} that
\beq{e.4.21}
\wdh{X}^h(t)=x+\wdh{B}^h(t) + \wdh{M}^h(t) - \wdh{Y}^h(t).
\eeq
Using the same argument we used for \eqref{e.4.18}
we obtain \beq{e.4.22}
\wdh{X}^h(t) = x + \int_0^t \Big[ b (\wdh{X}^h(s)) + \wdh{C}^h(s) \Big]d\wdh{T}^h(s) + \int_0^t \sg(\wdh{X}^h(s)) d \wdh{W}^h(s)  - \wdh{Y}^h(t) + \wdh{\e}^h(t),
\eeq
with $\wdh{\e}^h\cd$ is an $\wdh{\mathcal{F}}^h\cd$-adapted process satisfying \beq{e.4.23}\lim\limits_{h\to 0} \sup\limits_{t\in [0, T_0]}\E|\wdh{\e}^h(t)|=0 \quad \text{for any }0<T_0<\infty.\eeq

Define
\beq{e.4.aa}{A}^h(t)=\int_0^t  {C}^h(s) ds, \quad \wdh{A}^h(t)=\int_0^t  \wdh{C}^h(s) \wdh{T}^h(s), \quad t\ge 0, h> 0.\eeq

\subsection{Convergence}
\

Using weak convergence methods, we can obtain the convergence of the algorithms. Let $D[0, \infty)$ denote the space of functions that are right continuous and have left-hand limits endowed with the Skorohod topology. All the weak analysis
will be on this space or its $k$-fold products $D^k[0, \infty)$ for appropriate $k$.

\begin{thm}\label{thm:thm} Suppose Assumptions \ref{a:1} and \ref{a:2} hold.
	Let the chain $\{X^h_n \}$ be constructed with transition probabilities defined in \eqref{e.4.7}-\eqref{e.4.8},
	$X^h\cd$, $W^h\cd$, $Y^h\cd$, and $A^h\cd$ be the continuous-time interpolation defined in \eqref{e.4.11}-\eqref{e.4.12}, \eqref{e.4.16}, and \eqref{e.4.aa}.
Let $\wdh{X}^h\cd$,  $\wdh{W}^h\cd$, $\wdh{Y}^h\cd$, $\wdh{A}^h\cd$ be the corresponding rescaled processes, $\wdh{T}^h\cd$ be the process from
Definition {\rm\ref{def:1}}, and denote
$$\wdh{H}^h\cd=\Big(\wdh{X}^h\cd, \wdh{W}^h\cd, \wdh{Y}^h\cd, \wdh{A}^h\cd, \wdh{T}^h\cd\Big).$$
Then the family of processes
	$(\wdh{H}^h)_{h>0}$ is tight. 	
	As a result, $(\wdh{H}^h)_{h>0}$ has a weakly convergent subsequence with limit $$\wdh{H}\cd=\Big(\wdh{X}\cd,\wdh{W}\cd, \wdh{Y}\cd, \wdh{A}\cd, \wdh{T}\cd\Big).$$
\end{thm}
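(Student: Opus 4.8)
The plan is to prove tightness of $(\wdh H^h)_{h>0}$ one coordinate at a time, since a family of $D^k[0,\infty)$-valued processes is tight as soon as each of its real-valued coordinates is tight in $D[0,\infty)$; Prohorov's theorem then produces the asserted weakly convergent subsequence. Throughout I would exploit that $\wdh X^h(t)$ lives in the compact set $[0,U]^d$ for every $t$ and $h$, which gives the compact-containment condition for the $\wdh X^h$-coordinate for free and lets $b(\wdh X^h\cd)$ and $\sg(\wdh X^h\cd)$ be bounded uniformly, together with the local-consistency relations \eqref{e.4.2} and the representation \eqref{e.4.22}.

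First I would dispose of the three ``tame'' coordinates. By Definition~\ref{def:1}, $\wdh T^h\cd$ is nondecreasing, vanishes at $0$, and has derivative $0$ or $1$, hence is Lipschitz with constant $1$; so $\{\wdh T^h\cd\}$ is an equi-Lipschitz family vanishing at the origin and is tight. The whole purpose of the stretched-out clock \eqref{e.4.20} is visible in $\wdh Y^h\cd$: each harvesting step adds only $h\ei$ to $Y^h$ but now occupies a rescaled-time interval of length exactly $h$, and no harvesting occurs on seeding steps, so the increments of $\wdh Y^h\cd$ are of order $O(h)\to 0$ while its total variation on any $[0,T]$ stays bounded by $\sqrt d\,T$ uniformly in $h$; thus $\{\wdh Y^h\cd\}$ is $C$-tight. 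Since $\wdh A^h(t)=\int_0^t \wdh C^h(s)\,d\wdh T^h(s)$ with $0\le \wdh C^h\le\la$ and $\wdh T^h\cd$ Lipschitz, $\wdh A^h\cd$ is Lipschitz with a constant controlled by $|\la|$, hence tight.

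Next comes the martingale coordinate $\wdh W^h\cd$, which I view as the crux. From \eqref{e.4.11} and \eqref{e.4.16}, $M^h\cd$, $W^h\cd$ and their rescaled versions are martingales (for the filtrations $\mathcal F^h(t)$, $\wdh{\mathcal F}^h(t)$) whose jumps are bounded by a constant times $\sup_{n,\omega}|\Delta X^h_n|\to 0$. The key computation is the predictable quadratic variation: using the covariance line of \eqref{e.4.2} together with $a(x)=P(x)D^2(x)P'(x)$ and $D_0=\diag\{1/r_1\cd,\dots,1/r_d\cd\}$, one obtains at a seeding step
\begin{equation*}
\E^h_m\big[\Delta W^h_m(\Delta W^h_m)'\big]=D_0(X^h_m)P'(X^h_m)\,a(X^h_m)\,P(X^h_m)D_0(X^h_m)\,\Delta t^h_m+o(\Delta t^h_m)=I\,\Delta t^h_m+o(\Delta t^h_m),
\end{equation*}
and $\Delta W^h_m=0$ at a harvesting step. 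Because exactly the seeding steps are the ones on which $\wdh T^h\cd$ increases, the predictable quadratic variation of $\wdh W^h\cd$ is $\wdh T^h(t)\,I+o(1)$, which is $C$-tight by the previous paragraph; combined with the vanishing jumps and the uniform bound $\E|\wdh W^h(t)|^2=\E\,\tr\langle\wdh W^h\rangle(t)\le d\,t$ (which gives compact containment), the standard martingale tightness criterion (Kurtz's criterion; see \cite{Kushner92, Kushner91}) yields tightness of $\{\wdh W^h\cd\}$.

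Finally, tightness of $\wdh X^h\cd$ I would read off from \eqref{e.4.22}, writing $\wdh X^h\cd$ as the sum of the constant $x$, the term $\int_0^t[b(\wdh X^h(s))+\wdh C^h(s)]\,d\wdh T^h(s)$ (Lipschitz, since $b$ is bounded on $[0,U]^d$, $0\le\wdh C^h\le\la$ and $\wdh T^h\cd$ is Lipschitz), the stochastic integral $\int_0^t\sg(\wdh X^h(s))\,d\wdh W^h(s)$, the $C$-tight process $-\wdh Y^h\cd$, and the negligible remainder $\wdh\e^h\cd$ with $\sup_{t\le T_0}\E|\wdh\e^h(t)|\to 0$. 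The stochastic-integral term is $C$-tight because its integrand is bounded and, by Doob's inequality with the quadratic-variation identity above, $\E\big[\,|\int_t^{t+s}\sg(\wdh X^h)\,d\wdh W^h|^2\mid\wdh{\mathcal F}^h(t)\,\big]\le K\,\E\big[\wdh T^h(t+s)-\wdh T^h(t)\mid\wdh{\mathcal F}^h(t)\big]\le Ks$, verifying Kurtz's criterion. A sum of $C$-tight families is $C$-tight, so $\{\wdh X^h\cd\}$ is tight, and assembling the five coordinates gives tightness of $(\wdh H^h)_{h>0}$ in $D^5[0,\infty)$; Prohorov's theorem then delivers the weakly convergent subsequence with limit $\wdh H\cd=(\wdh X\cd,\wdh W\cd,\wdh Y\cd,\wdh A\cd,\wdh T\cd)$. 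I expect the hard part to be the rigorous justification of the quadratic-variation identity for $\wdh W^h\cd$ in the rescaled clock and the control of the remainders $\e^h_1$, $\e^h_2$ (hence $\wdh\e^h$) in \eqref{e.4.15}, \eqref{e.4.17} and \eqref{e.4.22}, precisely because this is what allows the tightness of the genuinely delicate diffusion and singular-harvesting pieces to be reduced to tightness of the benign, equi-Lipschitz processes $\wdh T^h\cd$ and the increments engineered by the time change \eqref{e.4.20}.
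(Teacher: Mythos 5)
Your proposal is correct and follows essentially the same route as the paper: componentwise tightness obtained from the equi-Lipschitz property of $\wdh{T}^h\cd$, the $O(h)$ harvesting increments engineered by the stretched-out clock \eqref{e.4.20}, the Lipschitz bound on $\wdh{A}^h\cd$ via $0\le \wdh{C}^h\le\lambda$, and a martingale quadratic-variation estimate for $\wdh{W}^h\cd$, followed by Prohorov's theorem. The only difference is that you spell out the quadratic-variation computation and Kurtz-type criterion for $\wdh{W}^h\cd$ and the decomposition of $\wdh{X}^h\cd$ explicitly, whereas the paper dispatches these as standard with references to Kushner's work.
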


\begin{proof}
		We use the tightness criteria used by \cite[p. 47]{Kushner84}. Specifically,  a sufficient condition for tightness of a sequence of processes  $\zeta^h\cd$ with paths in $D^k[0, \infty)$ is that for any constants $T_0, \rho
		\in (0, \infty)$,
		\bea
		\ad  \E_t^h\big|\zeta^h(t+s)-\zeta^h(t)\big|^2\le \E^h_t \gamma(h, \rho) \quad \text{for all}\quad s\in [0, \rho], \quad t\le T_0,\\
		\ad \lim\limits_{\rho\to 0}\limsup\limits_{h\to 0} \E  \gamma(h, \rho) =0.
		\eea
			The proof for the tightness of $\wdh{W}^h\cd$ is standard; see for example \cite{Kushner91, Jin12}.  We show the tightness of
			$\wdh{Y}^h\cd$ to demonstrate the role of time rescaling.
	Following the definition of ``stretched out" timescale, for any constants $T_0, \rho
	\in (0, \infty)$, $s\in [0, \rho]$ and $t\le T_0$,
	\beq{e:23}
	\barray
	\E^h_{t}|\wdh{Y}^h(t+s) - \wdh{Y}^h(t)|^2\ad \le d h^2 \E^h_{t}(\text{number of harvesting steps in}\\
	\aad \hspace{3cm} \text{interpolated interval } [t, t+s) )^2\\
	\ad\le d h^2 \max\{1, \rho^2/h^2 \} \\
	\ad \le d (h^2 + \rho^2).
	\earray
	\eeq
	Thus $\{\wdh{Y}^h\cd\}$ is tight. The tightness of $\{\wdh{T}^h\cd\}$ follows from the fact that
	$$0\le \wdh{T}^h(t+s)-\wdh{T}^h(t)\le \rho.$$	
	Since $|\wdh{A}^h(t+s)-\wdh{A}^h(t)|\le |\wdh{T}^h(t+s)-\wdh{T}^h(t)|\sum_{i=1}^d \lambda
	_i$, it follows that $\{\wdh{A}^h\cd\}$ is tight.
	The tightness of  $\{\wdh{X}^h\cd\}$ follows from \eqref{e.4.21}, \eqref{e:23}. Hence $\{\wdh{X}^h\cd, \wdh{W}^h\cd, \wdh{Y}^h\cd, \wdh{A}^h\cd, \wdh{T}^h\cd\}$ is tight. By virtue of Prohorov's Theorem,
	$\wdh{H}^h\cd$ has a weakly convergent subsequence with the limit  $\wdh{H}\cd$. This completes the proof.
\end{proof}

We proceed to characterize the limit process.

\begin{thm}\label{thm:thm4.4}  Under conditions of Theorem \ref{thm:thm},
	let $\wdh{\mathcal{F}}(t)$ be the $\sigma$-algebra generated by
	$$\{\wdh{X}(s), \wdh{W}(s), \wdh{Y}(s), \wdh{A}(s), \wdh{T}(s):s \le t\}.$$
	Then the following assertions hold.
	\begin{itemize}
		\item[\rm (a)] $\wdh{X}\cd$, $\wdh{W}\cd$, $\wdh{Y}\cd$, $\wdh{A}\cd$, and $\wdh{T}\cd$  have continuous paths with probabilty one,  $\wdh{Y}\cd$ and $\wdh{T}\cd$ are  nondecreasing and nonnegative. Moreover, $\wdh{T}\cd$ is Lipschitz continuous with Lipschitz coefficient 1.
		\item[\rm (b)] There exists an $\{\wdh{\mathcal{F}}\cd\}$-adapted process $\wdh{C}\cd$ with $\wdh{C}(t)\in  [0, \lambda]$ for any $t\ge 0$, such that $\wdh{A}(t)=\int_0^t \wdh{C}(s)d\wdh{T}(s)$ for any $t\ge 0$.
		\item[\rm (c)] $\wdh{W}(t)$ is an $\wdh{\mathcal{F}}(t)$-martingale with quadratic variation process $\wdh{T}(t)I_d$, where $I_d$ is the $d\times d$ identity matrix.
			\item[\rm (d)] The limit processes satisfy
		\beq{e.5.1}
		\wdh{X}(t) = x + \int_0^t \big[ b (\wdh{X}(s)) + \wdh{C}(s) \big]d\wdh{T}(s) + \int_0^t \sg(\wdh{X}(s)) d \wdh{W}(s)  - \wdh{Y}(t).	 \eeq
	\end{itemize}
\end{thm}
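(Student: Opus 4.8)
\emph{Proof plan.} The plan is to run the standard weak-convergence argument of the Markov chain approximation method (\cite{Kushner92, Kushner91}). Working along the weakly convergent subsequence provided by \thmref{thm:thm} and invoking the Skorokhod representation theorem, we may assume that $\wdh{H}^h\cd\to\wdh{H}\cd$ with probability one, uniformly on every bounded time interval, after which parts (a)--(d) are read off from this convergence together with the construction of the chain. For (a), note that every interpolated process has maximal jump of order $h$: at a harvesting step $\wdh{Y}^h\cd$ increases by $h\ei$ while $\wdh{X}^h\cd$ decreases by $h\ei$, and at a diffusion step the one-step increment of $\wdh{X}^h\cd$ --- hence, after multiplication by the bounded factor $D_0 P'$ appearing in \eqref{e.4.16}, that of $\wdh{W}^h\cd$ --- is bounded by a constant times $h$. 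Since $\sup_{n,\omega}|\Delta X^h_n|\to 0$, the uniform limits $\wdh{X}\cd,\wdh{W}\cd,\wdh{Y}\cd,\wdh{A}\cd,\wdh{T}\cd$ all have continuous paths; monotonicity and nonnegativity of $\wdh{Y}^h\cd$ and $\wdh{T}^h\cd$ are preserved under uniform convergence; and since the derivative of $\wdh{T}^h\cd$ is always $0$ or $1$ by \defref{def:1}, each $\wdh{T}^h\cd$ is $1$-Lipschitz uniformly in $h$ and so is the limit $\wdh{T}\cd$.

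For (b), by construction $\wdh{A}^h(t)=\int_0^t\wdh{C}^h(s)\,d\wdh{T}^h(s)$ with $\wdh{C}^h(s)\in[0,\lambda]$ for all $s$, hence $0\le\wdh{A}^h(t_2)-\wdh{A}^h(t_1)\le\lambda\big(\wdh{T}^h(t_2)-\wdh{T}^h(t_1)\big)$ componentwise whenever $t_1<t_2$. Passing to the limit gives the same inequalities for $\wdh{A}\cd$ and $\wdh{T}\cd$, so $d\wdh{A}\ll d\wdh{T}$ with a Radon--Nikodym derivative $\wdh{C}$ whose values lie in $[0,\lambda]$ for $d\wdh{T}$-almost every $s$; a progressively measurable version of $\wdh{C}\cd$ is then adapted to $\{\wdh{\mathcal F}(t)\}$. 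One may alternatively carry $\wdh{C}^h\cd$ along as a relaxed control and apply the chattering lemma, as in the singular control analyses of \cite{Kushner91, Budhiraja07}.

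Part (c) is the heart of the matter. I would identify $\wdh{W}\cd$ via the martingale problem: for every bounded continuous $g$, every $p\ge 1$, times $0\le t_1\le\cdots\le t_p\le t$ and $s>0$,
\begin{equation*}
\E\,g\big(\wdh{H}(t_1),\dots,\wdh{H}(t_p)\big)\big[\wdh{W}(t+s)-\wdh{W}(t)\big]=0,
\end{equation*}
\begin{equation*}
\E\,g\big(\wdh{H}(t_1),\dots,\wdh{H}(t_p)\big)\big[\wdh{W}(t+s)\wdh{W}(t+s)'-\wdh{W}(t)\wdh{W}(t)'-\big(\wdh{T}(t+s)-\wdh{T}(t)\big)I_d\big]=0 .
\end{equation*}
These follow from the prelimit identities: $M^h_n$ is an $\mathcal F^h_n$-martingale, its one-step conditional covariance equals $a(X^h_m)\Delta t^h_m+o(\Delta t^h_m)$ at a diffusion step and vanishes at a harvesting step by \eqref{e.4.2}, and the factor $D_0 P'$ in \eqref{e.4.16} converts $a$ into $I_d$ while the interpolated time accumulated over diffusion steps is precisely $\wdh{T}^h$. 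One then lets $h\to 0$ using the almost sure convergence and a uniform-integrability bound on the second moments of the martingale increments, which follows from $\sup_{n,\omega}|\Delta X^h_n|\to 0$ and local consistency. I expect the bookkeeping that keeps the quadratic variation equal to $\wdh{T}\cd$ rather than to real time --- tracking that harvesting steps advance $\wdh{t}^h_n$ but leave $\wdh{T}^h$ unchanged --- together with verifying that these identities hold relative to the limit filtration $\wdh{\mathcal F}\cd$, to be the main obstacle.

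For (d), I would pass to the limit in \eqref{e.4.22}: the error term vanishes by \eqref{e.4.23}; $\wdh{Y}^h\cd\to\wdh{Y}\cd$ uniformly; $\int_0^t b(\wdh{X}^h)\,d\wdh{T}^h\to\int_0^t b(\wdh{X})\,d\wdh{T}$ by continuity of $b$ and uniform convergence of $(\wdh{X}^h,\wdh{T}^h)$; $\int_0^t\wdh{C}^h\,d\wdh{T}^h=\wdh{A}^h(t)\to\wdh{A}(t)=\int_0^t\wdh{C}\,d\wdh{T}$ by part (b); and $\int_0^t\sigma(\wdh{X}^h)\,d\wdh{W}^h\to\int_0^t\sigma(\wdh{X})\,d\wdh{W}$ by the standard stability of stochastic integrals under weak convergence (\cite{Kushner92}), which applies because $\sigma$ is continuous and $\wdh{W}\cd$ is the martingale obtained in (c). Collecting these limits yields \eqref{e.5.1} and completes the proof.
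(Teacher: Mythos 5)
Your proposal is correct and follows essentially the same route as the paper: jump sizes of order $h$ and preservation of monotonicity under uniform convergence for (a), the componentwise domination $|\wdh{A}^h_i(t+s)-\wdh{A}^h_i(t)|\le\lambda_i|\wdh{T}^h(t+s)-\wdh{T}^h(t)|$ followed by a Radon--Nikodym argument for (b), the martingale-problem identities tested against functions of $\wdh{H}(t_k)$ for (c), and term-by-term passage to the limit in \eqref{e.4.22} for (d). The only cosmetic difference is that where you invoke the standard stability of stochastic integrals under weak convergence, the paper carries out that argument explicitly via the piecewise-constant approximation $\wdh{X}^{h,\rho}$ and then lets $\rho\to 0$.
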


\begin{proof}
	(a)
	Since the sizes of the jumps of $\wdh{X}^h\cd$, $\wdh{W}^h\cd$, $\wdh{Y}^h\cd$, $\wdh{A}^h\cd$, $\wdh{T}^h\cd$  go to $0$ as $h\to 0$, the limits of these processes have continuous paths with probability one (see \cite[p. 1007]{Kushner90}). Moreover, $\wdh{Y}^h\cd$  (resp. $\wdh{T}^h\cd$) converges uniformly to $\wdh{Y}\cd$, (resp. $\wdh{T}\cd$) on bounded time intervals. This, together with
	the monotonicity and non-negativity
	of $\wdh{Y}^h\cd$ and $\wdh{T}^h\cd$
	implies that the processes $\wdh{Y}\cd$ and $\wdh{T}\cd$ are nondecreasing and nonnegative.

	 (b) Since $|\wdh{A}^h_i(t+s)-\wdh{A}^h_i(t)|\le \lambda_i |\wdh{T}^h(t+s)-\wdh{T}^h(t)|$
	for any $t\ge 0, s\ge 0, h>0, i=1, 2,\dots, d$ and by virtue of Skorohod representation,
	$|\wdh{A}_i(t+s)-\wdh{A}_i(t)|\le \lambda_i |\wdh{T}(t+s)-\wdh{T}(t)|$
	for any $t\ge 0, s\ge 0, i=1, 2,\dots, d$; that is, each $\wdh{A}_i$ is absolutely continuous with respect to $\wdh{T}$. Therefore, there exists a $[0,\lambda_i]$-valued $\{\wdh{\mathcal{F}}(t)\}$-adapted process  $\wdh{C}_i\cd$ such that $\wdh{A}_i(t)=\int_0^t \wdh{C}_i(s)d\wdh{T}(s)$ for any $t\ge 0$. Then $C\cd=(C_1\cd, \dots, C_d\cd)'$
	is the desired process.

	 (c)
	Let $\wdh{\E}_t^h$ denote the expectation conditioned on $\wdh{\mathcal{F}}^h(t)=\mathcal{F}^h(\wdh{T}^h(t))$.
	Recall that $W^h\cd$ is an $\mathcal{F}^h\cd$- martingale and by the definition of $\wdh{W}^h\cd$, for any $\rho>0$,
	\beq{e.5.2.1}
	\barray
	\aad \wdh{\E}_t^h \big(\wdh{W}^h(t+\rho)-\wdh{W}^h(t)\big)=0,\\
	\aad\wdh{\E}_t^h \big(\wdh{W}^h(t+\rho)\wdh{W}^h(t+\rho)'-\wdh{W}^h(t)\wdh{W}^h(t)'\big)
	= \big(\wdh{T}^h(t+\rho)-\wdh{T}^h(t)\big)I_d +  \wdh{\e}^h(\rho),
	\earray
	\eeq
	where $\E|\wdh{\e}^h(\rho)|\to 0$ as $h\to 0$.
	To characterize $\wdh{W}\cd$, let $q$ be an arbitrary integer, $t>0$, $\rho>0$ and $\{t_k: k\le q\}$ be such that $t_k\le t<t+\rho$ for each $k$.
	Let $\Psi\cd$ be a real-valued and continuous function with compact support. Then in view of \eqref{e.5.2.1}, we have
	\beq{e:251}
	\E\Psi(\wdh{H}^h(t_k), k\le q)\Big[ \wdh{W}^h(t+\rho)-\wdh{W}^h(t)\Big]=0,
	\eeq
	and
	\beq{e:2513}
	\E\Psi(\wdh{H}^h(t_k), k\le q)\Big[ \big(\wdh{W}^h(t+\rho)\wdh{W}^h(t+\rho)'-\wdh{W}^h(t)\wdh{W}^h(t)'-\big(\wdh{T}^h(t+\rho)-\wdh{T}^h(t)\big)I_d-\wdh{\e}^h(\rho)\Big]=0.
	\eeq
	By the Skorokhod representation and the dominated convergence theorems, letting $h\to 0$ in \eqref{e:251}, we obtain
	\beq{e:252}
	\E\Psi(\wdh{H}(t_k), k\le q)\Big[ \wdh{W}(t+\rho)-\wdh{W}(t)\Big]=0.
	\eeq
	Since $\wdh{W}\cd$  has continuous paths with probability one,
	\eqref{e:252} implies that $\wdh{W}\cd$ is a continuous $\wdh{\mathcal{F}}\cd$-martingale. Moreover, \eqref{e:2513} gives us that
	\beq{}
	\E\Psi(\wdh{H}(t_k), k\le q)\Big[ \wdh{W}(t+\rho)\wdh{W}(t+\rho)'-\wdh{W}(t)\wdh{W}(t)'-\big(\wdh{T}(t+\rho)-\wdh{T}(t)\big)I_d\Big]=0.
	\eeq
	This implies part (c).

	(d) The proof of this part is motivated by that of \cite[Theorem 10.4.1]{Kushner92}.
	By virtue of Skorohod representation,
	\beq{e:27}
	\int_0^t  \Big[ b (\wdh{X}^h(s)) + \wdh{C}^h(s) \Big]d\wdh{T}^h(s)
	\to  \int_0^t  \Big[ b (\wdh{X}(s)) + \wdh{C}(s) \Big]d\wdh{T}(s),
	\eeq
	as $h\to 0$
	uniformly in $t$ on any  bounded time interval with probability one.

	For each positive constant $\rho$ and a process $\wdh{\nu}\cd$, define the piecewise constant process $\wdh{\nu}^\rho\cd$
	by
	$\wdh{\nu}^\rho(t)=\wdh{\nu}(k\rho)$ for $t\in [k\rho, k\rho+\rho), k\in\mathbb{Z}_{\geq 0}$. Then, by the tightness of
	$(\wdh{X}^h\cd)$,
	\eqref{e.4.22} can be rewritten as
	\beq{e:28}
	\wdh{X}^h(t) = x_0 + \int_0^t \Big[b (\wdh{X}^h(s)) + \wdh{C}^h(s)\Big] d\wdh{T}^h(s) + \int_0^t \sg(\wdh{X}^{h, \rho}(s)) d \wdh{W}^h(s)  - \wdh{Y}^h(t) + \wdh{\e}^{h, \rho}(t),
	\eeq
	where
	$\lim\limits_{\rho\to 0}\limsup\limits_{h\to 0} \E|\wdh{\e}^{h, \rho}(t)|=0.$
	Owing to the fact that
	$\wdh{X}^{h, \rho}$
	takes constant values
	on the intervals $[k\rho, k\rho+\rho)$, we have
	\beq{e:29}\int_0^t  \sg(\wdh{X}^{h, \rho}(s))d\wdh{W}^h(s)\to \int_0^t  \sg(\wdh{X}^\rho(s))d\wdh{W}(s) \quad \text{ as }\quad  h\to 0,
	\eeq
	which are well defined with probability one since they can be written as finite sums.
	Combining \eqref{e:27}-\eqref{e:29}, we have
	\beq{}
	\wdh{X}(t)=x_0 +\int_0^t \Big[b(\wdh{X}(s))+\wdh{C}(s)\Big]d\wdh{T}(s)+\int_0^t  \sg(\wdh{X}^\rho(s))d\wdh{W}(s) - \wdh{Y}(t)+\wdh{\e}^{\rho}(t),
	\eeq
	where
	$\lim\limits_{\rho\to 0}E|\wdh{\e}^{ \rho}(t)|=0.$ Taking the limit $\rho \to 0$ in the above equation yields the result.
\end{proof}

For $t<\infty$, define the inverse ${\lbar T}(t)= \inf\{s: \wdh{T}(s)>t\}$. For any process $\wdh{\nu}\cd$, define the time-rescaled process $(\lbar\nu\cd)$ by $\lbar \nu(t)= \wdh{\nu}({\lbar T}(t))$ for $t\ge 0$. Let ${\mathcal{\lbar F}}(t)$ be the $\sigma$-algebra generated by
	$\{\lbar X(s), {\lbar W}(s), {\lbar Y}(s), {\lbar C}(s), {\lbar T}(s): s\le t\}$. Let $V^h(x)$ and $V^U(x)$ be value the functions defined in \eqref{e2.4.5} and \eqref{e:VU}, respectively.

\begin{thm}\label{thm:r}
	 Under conditions of Theorem \ref{thm:thm}, the following assertions are true.
	\begin{itemize}
		\item[\rm(a)] $\lbar T$ is right continuous, nondecreasing, and $\lbar T(t)\to \infty$ as $t \to \infty$ with probability one.
		\item[\rm(b)]  The processes $\lbar Y(t)$ and $\lbar C(t)$ are $\mathcal{\lbar F}(t)$-adapted. Moreover, $\lbar Y(t)$ is right-continuous, nondecreasing, nonnegative; $\lbar{C}(t)\in [0, \lambda]$ for any $t\ge 0$.
		\item[\rm(c)]
		$\lbar W\cd$ is an $\mathcal{\lbar F}(t)$-adapted standard Brownian motion,
		and \beq{e.5.2}
		{\lbar X}(t)=x +\int_0^t \Big[b(\lbar X(s))   +\overline{C}(s)\Big] ds+\int_0^t  \sg(\lbar X(s))d\lbar W(s)-\lbar Y(t), \quad t\ge 0.
		\eeq
	\end{itemize}
\end{thm}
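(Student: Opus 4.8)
The plan is to carry out the standard ``un-stretching'' of the time scale from the Markov-chain approximation method (see \cite[Chapter 10]{Kushner92}), taking as input the rescaled limit $\wdh X\cd,\wdh W\cd,\wdh Y\cd,\wdh A\cd,\wdh T\cd$ and the identity \eqref{e.5.1} furnished by \thmref{thm:thm4.4}, and composing these processes with the inverse $\lbar T\cd$ of $\wdh T\cd$. The one structural fact driving everything is that the construction of the stretched-out time scale (\defref{def:1}) yields, at every interpolation point and hence up to an $o(1)$ error, the exact balance $\wdh T^h(t)+\sum_{i=1}^d\wdh Y^h_i(t)=t$: each harvesting step adds $h$ to one coordinate of $Y^h$ and $h$ to the frozen part of the stretched clock, while each diffusion step adds $\Delta t^h_n$ to both the real and the stretched clocks. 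Passing to the weak limit (via Skorokhod representation and the uniform-on-compacts convergence of $\wdh T^h\cd$ and $\wdh Y^h\cd$) gives $\wdh T(t)+\sum_i\wdh Y_i(t)=t$ for all $t\ge 0$.

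For part (a): by \thmref{thm:thm4.4}(a) the process $\wdh T\cd$ is continuous, nondecreasing and nonnegative, so $\lbar T(t)=\inf\{s:\wdh T(s)>t\}$ is right-continuous and nondecreasing in $t$, and $\wdh T(\lbar T(t))=t$ whenever $\lbar T(t)<\infty$. It remains to prove $\wdh T(s)\to\infty$ as $s\to\infty$ almost surely. If this fails, then on an event of positive probability $\wdh T(s)\uparrow L<\infty$, and the balance identity forces $\sum_i\wdh Y_i(s)=s-\wdh T(s)\to\infty$, i.e. the harvested amount grows linearly. Since $e^{-\delta\wdh T(s)}\ge e^{-\delta L}>0$ on that event, the discounted harvesting reward $\int_0^\infty e^{-\delta\wdh T(s)}\,f\cdot d\wdh Y(s)$ is then $+\infty$ there, whereas $\wdh A(\infty)$ stays bounded so the seeding cost remains finite. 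Combining this with a Fatou argument against the uniform boundedness of the performance functionals along the chains (which holds because $f$, $g$ and the seeding rate are bounded and the state space is $[0,U]^d$) contradicts the finiteness of $V^U$. Hence $\wdh T(s)\to\infty$ a.s., so $\lbar T(t)<\infty$ for every $t$ and $\lbar T(t)\to\infty$ as $t\to\infty$.

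Part (b) is then bookkeeping: $\lbar Y(t)=\wdh Y(\lbar T(t))$ and $\lbar C(t)=\wdh C(\lbar T(t))$, with $\wdh C\cd$ the process from \thmref{thm:thm4.4}(b), inherit respectively right-continuity, monotonicity and non-negativity, and the bound $[0,\lambda]$, and are $\mathcal{\lbar F}\cd$-adapted by the definition of $\mathcal{\lbar F}\cd$. For part (c) I would invoke the time-change theorem for continuous martingales. By \thmref{thm:thm4.4}(c), $\wdh W\cd$ is a continuous $\wdh{\mathcal F}\cd$-martingale with quadratic variation process $\wdh T(t)I_d$, and each $\lbar T(t)$ is an $\wdh{\mathcal F}\cd$-stopping time; hence $\lbar W(t)=\wdh W(\lbar T(t))$ has quadratic variation $\wdh T(\lbar T(t))I_d=tI_d$ and is an $\wdh{\mathcal F}_{\lbar T(t)}$-martingale, and since it is $\mathcal{\lbar F}\cd$-adapted with $\mathcal{\lbar F}(t)\subset\wdh{\mathcal F}_{\lbar T(t)}$ it is also an $\mathcal{\lbar F}\cd$-martingale; Lévy's characterization gives that $\lbar W\cd$ is an $\mathcal{\lbar F}\cd$-standard Brownian motion. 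Finally, evaluating \eqref{e.5.1} at $s=\lbar T(t)$: the term $\wdh Y(\lbar T(t))$ is $\lbar Y(t)$; the change of variables $u=\wdh T(s)$ (legitimate because $\wdh T\cd$ is flat off the support of $d\wdh T$ and $\lbar X(s)=\wdh X(\lbar T(s))$) turns the drift integral into $\int_0^t[b(\lbar X(s))+\lbar C(s)]\,ds$; and the time-change rule for stochastic integrals, using that $\wdh W\cd$ is constant on the flat intervals of $\wdh T\cd$ (its quadratic variation vanishing there) so that those intervals carry no stochastic mass, turns the martingale part into $\int_0^t\sg(\lbar X(s))\,d\lbar W(s)$. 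This is exactly \eqref{e.5.2}.

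The main obstacle is the finiteness of $\lbar T\cd$, that is, ruling out that the limiting stretched clock freezes at a finite value; this is where the boundedness of the state space, the positivity of the constant prices $f_i$, and the finiteness of $V^U$ must be combined, carefully, with the weak-convergence passage to the limit in the performance functional. The secondary delicate point is the treatment of the flat intervals of $\wdh T\cd$ — the intervals on which harvesting acts and $\wdh X\cd$ may jump downward — in the change of variables, so that \eqref{e.5.1} transforms cleanly into \eqref{e.5.2} with no leftover terms.
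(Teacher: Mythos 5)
Your proposal is correct and follows the same overall architecture as the paper's proof (inversion of the stretched clock, stopping-time and optional-sampling arguments for the martingale property, L\'evy's characterization, and a change of variables in \eqref{e.5.1}); parts (b) and (c) match the paper essentially step for step, including the key observation that $\wdh W\cd$ is constant on the flat stretches of $\wdh T\cd$, which the paper handles instead by writing $\wdh W\cd=\wdt W(\wdh T\cd)$ for the limit $\wdt W$ of $W^h\cd$. The one genuinely different sub-argument is part (a). Both proofs rest on the same balance identity between real time, stretched time, and cumulative harvest ($\wdh T^h(t)+\sum_i\wdh Y^h_i(t)=t$ up to $o(1)$, which the paper records as the one-sided bound $\wdh T^h(t+d|Y^h(t)|)\ge t-1$), but they close the contradiction differently: the paper uses the uniform moment bound $\sup_h\E|Y^h(t)|<\infty$, obtained directly from the representation \eqref{e.4.14}, together with Markov's inequality and the weak convergence $\wdh T^h\Rightarrow\wdh T$; you instead let the balance identity force $\sum_i\wdh Y_i(s)\to\infty$ on the event $\{\sup_s\wdh T(s)<L\}$ and derive an infinite discounted reward contradicting the finiteness of $V^U$. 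Your route works, but note two things. First, the uniform boundedness of the performance functionals $J^h$ that your Fatou step leans on is itself a consequence of the very same moment bound $\E|Y^h(t)|\le C(1+t)$ from \eqref{e.4.14}, so the detour through the value function does not avoid that estimate---it just uses it one step removed; a shorter variant of your argument would contradict the a.s.\ finiteness of $\wdh Y(t)=x+\int_0^t[b+\wdh C]\,d\wdh T+\int_0^t\sg\,d\wdh W-\wdh X(t)$ directly on the event $\{\wdh T\le L\}$, where every term on the right is bounded. Second, to keep the argument non-circular you should justify the a.s.\ convergence of the finite-horizon rewards $\int_0^{T_0}e^{-\delta\wdh T^h}f\cdot d\wdh Y^h$ using only Theorems \ref{thm:thm} and \ref{thm:thm4.4} (Skorokhod representation plus continuity of the limits), not the later Theorem \ref{thm:4.6}, which itself invokes Theorem \ref{thm:r}; this is doable but should be said. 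The remaining technical point you gloss over---uniform integrability of $\{\wdh W(\lbar T(t)\wedge n)\}_n$ needed to apply optional sampling at the unbounded stopping times $\lbar T(t)$---is handled in the paper via truncation and the Burkholder--Davis--Gundy inequality and should be included for completeness.
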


\begin{proof}
	(a)
	We will argue via contradiction that $\wdh{T}(t)\to \infty$ as $t\to \infty$ with probability one. Suppose $\P[\sup_{t\ge 0}\wdh{T}(t)<\infty]>0$. Then there exist positive constants $\e$ and $T_0$ such that
	\beq{e:300}
	\P[\sup\limits_{t\ge 0}\wdh{T}(t)<T_0-1]>\e.
	\eeq	
	We first observe that
	$$t+d|Y^h(t)|\ge \sum\limits_{k=0}^{n^h(t)-1}\Big(\Delta t^h_n I_{\{\pi^h_k=0\}} + h I_{\{\pi^h_k\ge 1\}}\Big).$$
	Since $\wdh{T}^h\cd$
	is nondecreasing and
	$\wdh{T}^h(\wdh{t}^h_n)=t^h_n$,
	\beq{e:301}
	\barray
	\wdh{T}^h\big(t+d|Y^h(t)|\big) \ad\ge  \wdh{T}^h \Big(\sum\limits_{k=0}^{n^h(t)-1}\big( \Delta t^h_k I_{\{\pi^h=0\}} + hI_{\{\pi^h_k\ge 1\}}\big)\Big)\\
	\ad = \wdh{T}^h(\wdh{t}^h_{n^h(t)})={t}^h_{n^h(t)}\ge t-1.
	\earray
	\eeq	
	The last inequality above is a consequence of the inequalities $t^h_{n^h(t)}\le t< t^h_{n^h(t)+1}=t^h_{n^h(t)}+\Delta t^h_{n+1} <t^h_{n^h(t)}+1$.
	
	It follows from \eqref{e.4.14} that for each fixed $t\ge 0$,
	$\sup\limits_{h}\E\big(|Y^h(t)|\big)<\infty.$
	Thus, for a sufficiently large
	$K$,
	\beq{e:302}
	\P\{ d|Y^h(T_0)| \ge 2K \} \le\dfrac{d\E\big|Y^h(T_0)\big|}{2K}< \dfrac{\e}{2}.
	\eeq
	In views of \eqref{e:301} and \eqref{e:302}, we obtain
	\beq{e:3022}
	\barray
	\P\big[\wdh{T}^h(T_0+2K) <T_0-1 \big]\ad \le \P\big[\wdh{T}^h\big(T_0+d|Y^h(T_0)\big) <T_0-1 , d|Y^h(T_0)|<2K\big]\\
	\ad \quad + \P\big[ d|Y^h(T_0)|\ge 2K\big]\\
	\ad < \dfrac{\e}{2} \qquad \text{for small }h.
	\earray
	\eeq
	Since $\wdh{T}^h$ converges weakly  to $\wdh{T}$, it follows from \eqref{e:3022} that
	$\liminf\limits_{h\to 0} \P\big[\wdh{T}^h(T_0+2K) <T_0-1 \big]\le \e/2$. This contradicts \eqref{e:300} (see \cite[Theorem 1.2.1]{B68}).
	Hence $\wdh{T}(t)\to \infty$ as $t\to \infty$ with probability one.	
	Thus ${\lbar T}(t)<\infty$ for all $t$ and ${\lbar T}(t)\to \infty$ as $t\to\infty$. Since $\wdh{T}\cd$ is nondecreasing and continuous,
	${\lbar T}\cd$ is nondecreasing and right-continuous.
	
	(b) The properties of $\lbar{Y}\cd$ follow from the fact that $\wdh{Y}\cd$ is continuous, nondecreasing, nonnegative, and ${\lbar T}\cd$ is right-continuous.  The properties of $\lbar{C}\cd$ follow from those of  $\wdh{C}\cd$.

	(c) Note that although ${\lbar T}\cd$ might fail to be continuous, $\lbar{W}\cd=\wdh{W}({\lbar T}\cd)$ has continuous paths with probability one. Indeed, consider the tight sequence $\big({W}^h\cd, \wdh{W}^h\cd, \wdh{T}^h\cd\big)$
	with the weak limit
	$\big(\wdt{W}\cd, \wdh{W}\cd, \wdh{T}\cd\big)$.
	Since $\wdh{W}^h\cd=W^h(\wdh{T}^h\cd)$, we must have that $\wdh{W}\cd=\wdt{W}(\wdh{T}\cd)$.
	It follows from the definition of ${\lbar T}\cd$ that  for each $t\ge 0$, we have $\wdh{T}({\lbar T}(t))=t$. Hence $\lbar{W}(t)=\wdh{W}({\lbar T}(t))=\wdt{W}\big(\wdh{T}({\lbar T}(t))\big)=\wdt{W}(t)$.
	Since the sizes of the jumps of $W^h\cd$ go to $0$ as $h\to 0$, $\wdt{W}\cd$ also has continuous paths with probability $1$. This shows that $\lbar{W}\cd=\wdh{W}({\lbar T}\cd)$ has continuous paths with probability $1$.
	Before characterizing $\lbar{W}\cd$, we note that for $t\ge 0$, $\{{\lbar T}(s)\le t\}=\{\wdh{T}(t)\ge s\}\in \wdh{\mathcal{F}}(t)$ since  $\wdh{T}(t)$ is
	$\wdh{\mathcal{F}}(t)$-measurable. Thus ${\lbar T}(s)$ is an $\wdh{\mathcal{F}}(t)$-stopping time for each $s\ge 0$. Since $\wdh{W}(t)$
	is an $\wdh{\mathcal{F}}(t)$-martingale with quadratic variation process $\wdh{T}(t) I_d$,
	\beq{e:3023}\barray \ad  \E\big[\wdh{W}({\lbar T}(t)\wedge n)| \wdh{\mathcal{F}}({\lbar T}(s))\big]=\wdh{W}({\lbar T}(s)\wedge n), \quad n=1,2, \dots, \\
	\ad \E \wdh{W}({\lbar T}(t)\wedge n)\wdh{W}({\lbar T}(t)\wedge n)'=\E\wdh{T}({\lbar T}(t)\wedge n)I_d,
	\earray
	\eeq
	and
	$\wdh{T}({\lbar T}(t)\wedge n)\le \wdh{T}({\lbar T}(t))=t$.
	Hence for each fixed $t\ge 0$, the family $\{\wdh{W}({\lbar T}(t)\wedge n), n\ge 1\}$ is uniformly integrable. By that uniform integrability, we obtain from
	\eqref{e:3023} that
	$E\big[\wdh{W}({\lbar T}(t))| \wdh{\mathcal{F}}({\lbar T}(s))\big]=\wdh{W}({\lbar T}(s))$, that is
	$E\big[\lbar{W}(t)| \lbar{\mathcal{F}}(s)\big]=\lbar{W}(s)$.
	This proves that
	$\lbar{W}\cd$
	is a continuous $\lbar{\mathcal{F}}\cd$ -martingale. We next consider its quadratic variation. By the Burkholder-Davis-Gundy inequality, there exists a positive constant $K$ independent of $n=1, 2, ...$ such that
	$$\E|\wdh{W}({\lbar T}(t)\wedge n)|^2\le K\E\bigg[\Big(\sup\limits_{0\le s\le {\lbar T}(t)}|\wdh{W}({\lbar T}(s)\wedge n)|^2 \Big)\bigg]\le K\E|\wdh{T}({\lbar T}(t)\wedge n)|\le Kt.$$
	Thus the families $\{\wdh{W}({\lbar T}(t)\wedge n), n\ge 1\}$
	and $\{\wdh{T}({\lbar T}(t)\wedge n), n\ge 1\}$ are uniformly integrable for each fixed $t\ge 0$.
	Combining this with
	the fact that $\wdh{W}\cd$, $\wdh{T}\cd$ have continuous paths,
	for nonnegative constants $s\le t$,
	we have
	\beq{e:3024}
	\barray
	\wdh{W}({\lbar T}(s)\wedge n)\wdh{W}({\lbar T}(s)\wedge n)'\ad -\wdh{T}({\lbar T}(s)\wedge n)I_d\\
 \ad =
	\E\big[ \wdh{W}({\lbar T}(t)\wedge n)\wdh{W}({\lbar T}(t)\wedge n)'-\wdh{T}({\lbar T}(t)\wedge n)I_d | \wdh{\mathcal{F}}({\lbar T}(s))\big]\\
	\ad  \to \E\big[ \wdh{W}({\lbar T}(t))\wdh{W}({\lbar T}(t))'-\wdh{T}({\lbar T}(s))I_d | \wdh{\mathcal{F}}({\lbar T}(s))\big]\\
	\ad = \E\big[ \lbar{W}(t)\lbar{W}(t)'-t I_d | \lbar{\mathcal{F}}(s)\big].
	\earray\eeq
	Note that the first equation in \eqref{e:3024}
	follows from the martingale property of
	$\wdh{W}\cd\wdh{W}\cd'-\wdh{T}\cd I_d$
	with respect to $\wdh{\mathcal{F}}(t).$
	Letting $n\to\infty$ in \eqref{e:3024}, we arrive at
	$$\E\big[ \lbar{W}(t)\lbar{W}(t)'-tI_d | \mathcal{F}(s)\big]=\lbar{W}(s)\lbar{W}(s)'-s I_d.$$ Therefore, $\lbar{W}\cd$
	is an $\lbar{\mathcal{F}}(t)$ - adapted standard Brownian motion.
	A rescaling of \eqref{e.5.1} yields
	\begin{equation*}
	\lbar{X}(t)=x +\int_0^t \Big[b(\lbar{X}(s))  +\lbar C(s)\Big]ds+\int_0^t  \sg(\lbar{X}(s))d\lbar{W}(s)-\lbar{Y}(t).
	\end{equation*} The proof is complete.
\end{proof}

\begin{thm}\label{thm:4.6} Under conditions of Theorem \ref{thm:thm},
	let $V^h(x)$ and $V^U(x)$ be value functions defined in \eqref{e2.4.5}  and \eqref{e:VU}, respectively. Then $V^h(x)\to V^U(x), x\in[0,U]^d$ as $h\to 0$. If \eqref{e2.5} holds, then $V^h(x)\to V(x), x\in[0,U]^d$ as $h\to 0$.
\end{thm}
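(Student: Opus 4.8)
The plan is to run the Markov chain approximation argument of Kushner--Dupuis, feeding in the weak-convergence facts already proved in Theorems \ref{thm:thm}, \ref{thm:thm4.4} and \ref{thm:r}. Concretely, I would establish the two inequalities $\limsup_{h\to0}V^h(x)\le V^U(x)$ and $\liminf_{h\to0}V^h(x)\ge V^U(x)$ for every $x\in[0,U]^d$; together they give $V^h(x)\to V^U(x)$. The final assertion is then immediate: when \eqref{e2.5} holds, Proposition \ref{prop2} gives $V^U(x)=V(x)$ on $[0,U]^d$, so $V^h(x)\to V(x)$ there as well.

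For the upper bound, fix $\e>0$ and pick, for each $h$, a control $u^h=(\pi^h,C^h)\in\mathcal{A}^h_x$ with $J^h(x,u^h)\ge V^h(x)-\e$. Interpolate in continuous time and pass to the stretched-out clock of Section \ref{sec:alg}; by Theorem \ref{thm:thm} the family $\wdh H^h\cd$ is tight, and along a weakly convergent subsequence Theorems \ref{thm:thm4.4} and \ref{thm:r} identify the limit after undoing the time change: $\lbar W$ is a standard $\lbar{\mathcal F}$-Brownian motion, $\lbar Y$ is right continuous, nondecreasing and nonnegative, $\lbar C$ takes values in $[0,\lambda]$, $\lbar X$ solves \eqref{e2.2.2}, and $\lbar X(t)\in[0,U]^d$ for all $t$, so $(\lbar Y,\lbar C)\in\mathcal{A}^U_x$. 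It then remains to pass to the limit in the discounted payoff \eqref{e2.4.4}. The running seeding cost converges by the Skorokhod representation together with the continuity and boundedness of $g$; the harvesting term $\sum_m e^{-\delta t_m^h}f\cdot\Delta Y_m^h$ must first be re-expressed through $\wdh T^h$ and its right-continuous inverse $\lbar T$, after which boundedness and Lipschitz continuity of $t\mapsto e^{-\delta t}$, uniform convergence of $\wdh Y^h$ on compacts, and a truncation of the time horizon (controlled by the discount factor and the uniform bound $0\le J^h\le\text{const}$) give $\liminf_h J^h(x,u^h)\le J(x,\lbar Y,\lbar C)\le V^U(x)$. Letting $\e\to0$ proves this direction.

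For the lower bound, fix $\e>0$ and an $\e$-optimal pair $(Y,C)\in\mathcal{A}^U_x$. Following the singular-control approximation theory in \cite[Chapter 9]{Kushner92}, I would first replace $(Y,C)$ by a control whose seeding rate is piecewise constant in time and whose harvesting is performed at a large but finite rate (equivalently, in small lumps), with associated state process still confined to $[0,U]^d$ and with payoff changed by at most $\e$; only the boundedness of $f$ and $g$, the discount factor, and the stability of the dynamics are needed here. Such a regularized control is tracked by a chain control $u^h\in\mathcal{A}^h_x$ in the standard way, by dictating the actions $\pi^h_n$ and the seeding magnitudes $C^h_n$ to shadow the regularized rates; local consistency \eqref{e.4.2} and the same weak-convergence machinery then give convergence of the interpolated chain to the regularized diffusion and $J^h(x,u^h)\to$ its payoff, so $\liminf_h V^h(x)\ge V^U(x)-2\e$. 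Letting $\e\to0$ completes the proof that $V^h(x)\to V^U(x)$, and the second statement follows from Proposition \ref{prop2} as noted above.

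The main obstacle is the lower bound, specifically the regularization step. Admissible controls for the continuous problem permit a genuinely singular (unbounded-rate) harvesting component $Y$, which a chain harvesting increments of size $h$ at discrete steps cannot reproduce exactly; one must therefore approximate $Y$ by bounded-rate controls while simultaneously keeping the state inside $[0,U]^d$ and keeping the change in the discounted payoff small. The stretched-out time scale of Section \ref{sec:alg} is precisely the device that makes the bookkeeping of the exponential discount under these approximations tractable, and checking that the error processes $\wdh\e^h$ and $\wdh\e^{h,\rho}$ vanish in the right order — and that the limiting control remains $\lbar{\mathcal F}$-adapted, so that non-anticipativity survives the time change — is where the real technical work lies.
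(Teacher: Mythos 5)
Your proposal follows essentially the same route as the paper's proof: the payoff-convergence lemma under the stretched-out clock, the upper bound via $\e$-optimal chain controls plus tightness and identification of the limit as an element of $\mathcal{A}^U_x$, the lower bound via regularizing an $\e$-optimal continuous control to piecewise-constant seeding and lumped harvesting and tracking it with the chain (the paper follows Kushner's 1991 singular-control adaptation almost verbatim), and the final identification $V^U=V$ on $[0,U]^d$ via the proposition under \eqref{e2.5}. You also correctly locate the main technical burden in the lower-bound regularization and the survival of adaptedness under the time change, so no gaps to report.
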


\begin{proof}
	We first show that as $h\to 0$,
	\beq{e:32}
	J^h(x, u^h) \to
	J(x, \lbar{Y}\cd, \lbar C\cd),
	\eeq
	where $u^h=(\pi^h, C^h)$.
	Indeed, for an admissible strategy $u^h=(\pi^h_n, C^h_n)$, we have
	\beq{e:33}
	\barray
	J^h (x, u^h) \ad
	=  \E\bigg[\sum_{m=1}^{\infty} e^{-\delta t_m^h} f\cdot \Delta Y_{m}^{h}- \sum_{m=1}^{\infty} e^{-\delta t_m^h} g(X^h_m) \cdot C^h_m \Delta t^h_m\bigg].\\
	\ad =  \E\Big[ \int_0^{\infty} e^{-\delta\wdh{T}^h(t)} f\cdot d\wdh{Y}^{h}(t)-\int_0^{\infty} e^{-\delta\wdh{T}^h(t)} g(\wdh{X}^h(t))\cdot \wdh{C}^{h}(t)d\wdh{T}^h(t)\Big].
	\earray
	\eeq
	By a small modification of the proof in Theorem \ref{thm:r} (a), we have $\wdh{T}^h(t)\to \infty$ as $t\to \infty$ with probability $1$.
	It also follows from the representation \eqref{e.4.14} and estimates on $B^h\cd$ and $M^h\cd$ that $\{Y^h(n+1)-Y^h(n): n, h\}$ is uniformly integrable. Thus, by the definition of $\wdh{T}^h\cd$,
	\begin{equation*}
	\barray
	\E \int_{T_0}^{\infty }e^{-\delta\wdh{T}^h(t)} f\cdot d\wdh{Y}^{h}(t) \ad \le  \E \int_{\min \{t:\wdh{T}^h(t)\ge T_0 \}}^{\infty} K e^{-\delta s}\cdot d{Y}^{h}(s)\\
	\ad \le  \E \int_{T_0 }^{\infty} K e^{-\delta s}\cdot d{Y}^{h}(s)\to 0,
	\earray
	\end{equation*}
	uniformly in $h$ as $T_0\to \infty$. In the above argument, we have used that  $\wdh{T}^h(T_0)\le T_0$. Then by the weak convergence, the Skohorod representation, and uniform integrability we have for any $T_0>0$ that
	$$ \E\int_0^{T_0}  e^{-\delta\wdh{T}^h(t)} f\cdot d\wdh{Y}^{h}(t)\to
	\E \int_0^{T_0} e^{-\delta\wdh{T}(t)} f\cdot d\wdh{Y}(t).$$ Therefore, we obtain
	$$ \E\int_0^{\infty}  e^{-\delta\wdh{T}^h(t)} f\cdot d\wdh{Y}^{h}(t)\to
	\E \int_0^{\infty} e^{-\delta\wdh{T}(t)} f\cdot d\wdh{Y}(t).$$
	 Similarly,
	 $$ \E\int_0^{\infty}  e^{-\delta\wdh{T}^h(t)} g(\wdh{X}^h(t))\cdot \wdh C^h(t) d\wdh{T}^{h}(t)\to
	 \E \int_0^{\infty} e^{-\delta\wdh{T}(t)} g(\wdh{X}(t))\cdot \wdh{C}(t) d\wdh{T}(t).$$
	On inversion of the timescale, we have
	$$J^h(x, u^h)\to  \E\Big[\int_0^\infty  e^{-\delta t} f\cdot d\lbar{Y}(t)-\int_0^\infty  e^{-\delta t} g(
	 \lbar{X}(t))\cdot d\lbar{C}(t)dt\Big].$$
	Thus, $J^h(x, u^h)\to J(x, \lbar{Y}\cd, \lbar{C}\cd)$ as $h\to 0$.

	Next, we prove that
	\beq{e:34}
	\limsup\limits_{h} V^h(x) \le V^U(x).
	\eeq
	For any small positive constant $\e$, let $\{\wdt u^h\}$ be an $\e$-optimal harvesting strategy for the chain $\{X^h_n\}$;
	that is,
	$$V^h(x)=\sup\limits_{u^h} J^h(x, u^h)\le J^h(x, \wdt{u}^h) + \e.$$
	Choose a subsequence $\{\wdt{h}\}$ of $\{h\}$
	such that
	\beq{e:35}\limsup\limits_{{h}\to 0} V^{{h}}(x)=\lim\limits_{\wdt{h}\to 0}V^{\wdt{h}} (x)\le\limsup\limits_{\wdt{h}\to 0} J^{\wdt{h}}(x, {\wdt{u}}^{\wdt{h}})+\e.\eeq
	Without loss of generality (passing to an additional subsequence if needed), we may assume that
	$$\wdh{H}^{\wdt{h}}\cd
	= \Big(\wdh{X}^{\wdt{h}}\cd, \wdh{W}^{\wdt{h}}\cd, \wdh{Y}^{\wdt{h}}\cd, \wdh{A}^{\wdt{h}}\cd, \wdh{T}^{\wdt{h}}\cd\Big)$$
	converges weakly to
	$$\wdh{H}\cd= \Big(\wdh{X}\cd,  \wdh{W}\cd, \wdh{Y}\cd, \wdh{A}\cd, \wdh{T}\cd\Big),$$
	and $\lbar Y\cd=\wdh{Y}(\lbar T\cd)$, $\lbar A\cd=\wdh{A}(\lbar T\cd)$, $\lbar C\cd=\wdh{C}(\lbar T\cd)$.
	It follows from our claim in the beginning of the proof that
	\beq{e:36}
	\lim\limits_{\wdt{h}\to 0} J^{\wdt{h}}(x, {\wdt{u}}^{\wdt{h}})= J(x,  \lbar Y\cd, \lbar C\cd )\le V^U(x),
	\eeq
	where	$J(x, \lbar Y\cd, \lbar C\cd)\le V^U(x)$ since $V^U(x)$ is the maximizing performance function.
	Since $\e$ is arbitrarily small, \eqref{e:34} follows from \eqref{e:35} and \eqref{e:36}.
	
	To prove the reverse inequality
	$\liminf\limits_{h} V^h(x)\ge V^U(x)
	$, for any small positive constant $\e$,
	we choose a particular $\e$-optimal harvesting strategy for \eqref{e2.2.2} such that
	the approximation can be applied
	to the chain $\{X^h_n\}$
	and the associated reward compared with $V^h(x)$. By an adaptation of the method used by \cite{Kushner91} for singular control problems,
	for given $\e>0$, there is a $\e$-optimal harvesting strategy
	$({Y}\cd, {C}\cd)$
	for \eqref{e2.2.2} in $\mathcal{A}_x^U$ with the following properties: There are $T_\e<\infty$, $\rho>0$, and $\lambda>0$ such that $( {Y}\cd,  {C}\cd)$
	are constants on the intervals $[n\lambda, n\lambda + \lambda)$;  only one of the components of $ Y\cd$  can jump at a time and the jumps take values in the discrete set $\{k\rho
	: k=1, 2, ...\}$;  ${Y}\cd$ is bounded and is constant on $[T_\e, \infty)$; and $C\cd$ takes only finitely many values.
	
We adapt this strategy to the chain $\{X^h_n\}$  by a sequence of controls  $u^h\equiv (Y^h,C^h)$ using the same method as in \cite[p. 1459]{Kushner91}.  Suppose that we wish to apply a harvesting action of ``impulsive'' magnitude $\Delta y_i$ (that is, for species $i$) to the chain at some interpolated time $t_0$. Define $n_h=\min\{k: t^h_k\ge t_0\}$, with $t^h_k$ was defined in \eqref{typo}. Then starting at step $n_h$, apply $[\Delta y_i/h]$ successive harvesting steps on species $i$.
	Let $Y^h\cd$ denote the piecewise interpolation of the harvesting strategy just defined.
	With the observation above,
	let $({Y}^h, {C}^h)$ denote the interpolated form of the adaption.
	By the weak convergence argument analogous to that of preceding theorems, we obtain the  weak convergence $$\big(X^h\cd, W^h\cd, Y^h\cd, A^h\cd\big)\to \big({X}\cd, {W}\cd,{Y}\cd, {A}\cd\big),$$
	where $A(t)=\int_0^t C(s)ds$, and the limit solves \eqref{e2.2.2}. It follows that	
	$$J(x,   {Y}\cd, C\cd)\ge  V^U(x)-\e.$$
	By the optimality of $V^h(x)$ and the above weak convergence,
	$$V^h(x)\ge J^h(x, u^h)\to J(x, {Y}\cd, {C}\cd).$$
	It follows that
	$\liminf\limits_{h\to 0}V^h(x)\ge V^U(x) -\e$.
	Since $\e$ is arbitrarily small, $\liminf\limits_{h\to 0}V^h(x)\ge V^U(x)$. 
	Therefore, $V^h(x)\to V^U(x)$ as $h\to 0$. If \eqref{e2.5} holds,
	by Proposition \ref{prop1} we have $V^U(x)=V(x)$ which finishes the proof.
\end{proof}

\subsection{Transition Probabilities for bounded harvesting and seeding rates}\

In this case, recall that $u^h_n= (\pi^h_n, Q^h_n)$ for each $n$ and $u^h=\{u^h_n\}_n$ be a sequence of controls. It should be noted that $\pi^h_n = 0$
includes the case that we harvest nothing and also seed nothing; that is, $Q^h_n=0$. Note also that
$\mathcal{F}^h_n=\sigma\{X^h_m, u^h_m, m\le n\}$.

The sequence $u^h= (\pi^h, Q^h)$
is said to be admissible if it satisfies the following conditions:
\begin{itemize}
	\item[{\rm (a)}]
	$u^h_n$ is
	$\sigma\{X^h_0, X^h_1,\dots, X^h_{n}, u^h_0, u^h_1,\dots, u^h_{n-1}\}-\text{adapted},$
	\item[{\rm (b)}]  For any $x\in S_{h+}$, we have
	$$\P\{ X^h_{n+1} = x | \mathcal{F}^h_n\}= \P\{ X^h_{n+1} = x | X^h_n, u^h_n\} = p^h( X^h_n, x| u^h_n),$$
	\item[{\rm (c)}] Let $X^{h}_{n, j}$  be the $j$ th component of the vector $X^h_n$ for $j=1, 2, \dots, d$. Then
	$$ \P\big(
	\pi^h_{n}=\min\{j: X^{h}_{n, j} = U+h\}  | X^{h}_{n, j} = U+h \text{ for some } j\in \{1, \dots, d \}, \mathcal{F}^h_n\big)=1.
	$$
	\item[{\rm (d)}] $X^h_n\in S_{h+}$ for all $n\in\mathbb{Z}_{\geq 0}$.
\end{itemize}

Now we proceed to define transition probabilities $p^h (x, y | u)$ so that the controlled Markov chain $\{X^h_n\}$ is locally consistent with respect to the controlled diffusion $X\cd$.
 For $(x, u)\in S_{h+}\times \mathcal{U}$ with $u=(0, q)$, we define
\beq{e3.4.7}
\barray
\aad Q_h (x, u)=\sum\limits_{i=1}^d a_{ii}(x) -\sum\limits_{i, j: i\ne j}\dfrac{1}{2}|a_{ij}(x)| +h\sum\limits_{i=1}^d |b_i(x) +q_i| +h,\\
\aad p^h \(x, x+h\ei |u\) =
\dfrac{a_{ii}(x)/2-\sum\limits_{j: j\ne i}|a_{ij}(x )|/2+\big(b_{i}(x)+q_i\big)^+ h }{Q_h (x, u)}, \\
\aad p^h \(x, x-h \ei | u\) =
\dfrac{a_{ii}(x)/2-\sum\limits_{j: j\ne i}|a_{ij}(x )|/2+\big(b_i(x) +q_i)^- h}{Q_h (x, u)}, \\
\aad p^h \( x, x+h\ei +h \ej) | u\) =  p^h \( x, x-h\ei -h\ej | u\) =
\dfrac{a_{{ij}}^+(x)}{2Q_h (x, u)}, \\
\aad p^h \( x, x+h\ei -h \ej | u\) = p^h \( x, x-h\ei + h \ej |  u \)=
\dfrac{a_{{ij}}^-(x)}{2Q_h (x, u)}, \\
\aad   p^h \( x, x | u\) =\dfrac{h  }{ Q_h (x, u)},\qquad  \Delta t^h (x, u)=\dfrac{h^2}{Q_h(x, u)}.
\earray
\eeq
Set $p^h \(x, y|u=(0, q)\)=0$ for all unlisted values of $y\in S_{h+}$.
Assumption \ref{a:2} guarantees that
the transition probabilities in \eqref{e3.4.7} are well-defined. At the reflection steps, we define
\beq{e3.4.8}
\barray
\aad p^h\( x, x - h \ei| u = (i, q)\)=1 \quad\text{and} \quad \Delta t^h(x, u=(i, q))=0,\quad  i=1, 2, \dots, d.
\earray
\eeq
Thus, $p^h \(x, y|u=(i, q)\)=0$ for all unlisted values of $y\in S_{h+}$.

\subsection{Transition Probabilities for unbounded seeding and bounded harvesting rates}\

In this case, recall that $u^h_n= (\pi^h_n, R^h_n)$ for each $n$ and $u^h=\{u^h_n\}_n$ be a sequence of controls.
 It should be noted that $\pi^h_n = 0$
includes the case that we harvest nothing; that is, $R^h_n=0$.
Note also that
$\mathcal{F}^h_n=\sigma\{X^h_m, u^h_m, m\le n\}$.

The sequence $u^h= (\pi^h, R^h)$
is said to be admissible if it satisfies the following conditions:
\begin{itemize}
	\item[{\rm (a)}]
	$u^h$ is
	$\sigma\{X^h_0, X^h_1,\dots, X^h_{n}, u^h_0, u^h_1,\dots, u^h_{n-1}\}-\text{adapted},$
	\item[{\rm (b)}]  For any $x\in S_{h+}$, we have
	$$\P\{ X^h_{n+1} = x | \mathcal{F}^h_n\}= \P\{ X^h_{n+1} = x | X^h_n, u^h_n\} = p^h( X^h_n, x| u^h_n),$$
	\item[{\rm (c)}] Let $X^{h}_{n, j}$  be the $j$ th component of the vector $X^h_n$ for $j=1, 2, \dots, d$. Then $$ \P\big(
	\pi^h_{n}=\min\{j: X^{h}_{n, j} = U+h\}  | X^{h}_{n, j} = U+h \text{ for some } j\in \{1, \dots, d \}, \mathcal{F}^h_n\big)=1.
	$$
	\item[{\rm (d)}] $X^h_n\in S_{h+}$ for all $n\in\mathbb{Z}_{\geq 0}$.
\end{itemize}

Now we proceed to define transition probabilities $p^h (x, y | u)$ so that the controlled Markov chain $\{X^h_n\}$ is locally consistent with respect to the controlled diffusion $X\cd$. We use the notations as in the preceding case. For $(x, u)\in S_{h+}\times \mathcal{U}$ with $u=(0, r)$, we define
\beq{e4.4.7}
\barray
\aad Q_h (x, u)=\sum\limits_{i=1}^d a_{ii}(x) -\sum\limits_{i, j: i\ne j}\dfrac{1}{2}|a_{ij}(x)| +h\sum\limits_{i=1}^d |b_i(x) -r_i| +h,\\
\aad p^h \(x, x+h\ei |u\) =
\dfrac{a_{ii}(x)/2-\sum\limits_{j: j\ne i}|a_{ij}(x )|/2+\big(b_{i}(x)-r_i\big)^+ h }{Q_h (x, u)}, \\
\aad p^h \(x, x-h \ei | u\) =
\dfrac{a_{ii}(x)/2-\sum\limits_{j: j\ne i}|a_{ij}(x )|/2+\big(b_i(x) -r_i)^- h}{Q_h (x, u)}, \\
\aad p^h \( x, x+h\ei +h \ej) | u\) =  p^h \( x, x-h\ei -h\ej | u\) =
\dfrac{a_{{ij}}^+(x)}{2Q_h (x, u)}, \\
\aad p^h \( x, x+h\ei -h \ej | u\) = p^h \( x, x-h\ei + h \ej |  u \)=
\dfrac{a_{{ij}}^-(x)}{2Q_h (x, u)}, \\
\aad   p^h \( x, x | u\) =\dfrac{h  }{ Q_h (x, u)},\qquad  \Delta t^h (x, u)=\dfrac{h^2}{Q_h(x, u)}.
\earray
\eeq
Set $p^h \(x, y|u=(0, r)\)=0$ for all unlisted values of $y\in S_{h+}$.
Assumption \ref{a:2} guarantees that
the transition probabilities in \eqref{e4.4.7} are well-defined. At the reflection steps, we define
\beq{e4.4.8}
\barray
\aad p^h\( x, x - h \ei| u = (i, r)\)=1 \quad \text{and}\quad  \Delta t^h(x, u=(i, r))=0,\quad i=1, 2, \dots, d.
\earray
\eeq
As a result, $p^h \(x, y|u=(i, r)\)=0$ for all unlisted values of $y\in S_{h+}$.
At the seeding steps, we define
$$ p^h\( x, x + h \ei| u = (-i, r)\)=1 \quad \text{and}\quad  \Delta t^h(x, u=(-i, r))=0,\quad  i=1, 2, \dots, d.
$$
Thus, $p^h \(x, y|u=(-i, r)\)=0$ for all unlisted values of $y\in S_{h+}$.

\end{document}